\newtheorem{theo}{Theorem}[section]
\newtheorem{lemma}[theo]{Lemma}
\newtheorem{coro}[theo]{Corollary}
\newtheorem{prop}[theo]{Proposition}
\theoremstyle{definition}
\newtheorem{defi}[theo]{Definition}
\theoremstyle{remark}
\newtheorem{rem}[theo]{Remark}
\newtheorem{example}[theo]{Example}
\def\<#1,#2>{\langle #1,#2\rangle}
\renewcommand{\geq}{\geqslant}
\renewcommand{\leq}{\leqslant}
\newcommand{\mM}{\mathbb{M}}
\newcommand{\R}{\mathbb{R}}
\newcommand{\hilbert}[2]{\omega(#1/#2)}
\def\myhilbert(#1/#2){\omega(#1/#2)}
\newcommand{\pP}{\mathcal{P}}
\newcommand{\unit}{\mathbf{e}}
\newcommand{\C}{\mathcal{C}}
\newcommand{\CC}{\mathbb{C}}
\newcommand{\cM}{\mathcal{M}}
\newcommand{\cP}{\mathcal{P}}
\newcommand{\cX}{\mathcal{X}}
\newcommand{\sym}{\operatorname{S}}
\newcommand{\End}{\operatorname{End}}
\newcommand{\trace}{\operatorname{trace}}
\newcommand{\extr}{\operatorname{extr}}
\newcommand{\Diam}{\operatorname{diam}}
\newcommand{\firstdef}[1]{{\em #1}}
\newcommand{\othernorm}[1]{\|#1\|}
\newcommand{\argmax}[1]{\underset{#1}{\operatorname{arg}\,\operatorname{max}}\;}
\title{
Dobrushin ergodicity coefficient for Markov operators on cones, and beyond
}
\author{St\'ephane Gaubert}
\address{INRIA and CMAP\\
         \'Ecole Polytechnique\\
          91128 Palaiseau C\'edex, France}
\email[]{Stephane.Gaubert@inria.fr}
\thanks{The authors were partially supported by the Gaspard Monge Optimization Programme (PGMO, FMJH), 2012-2013}
\author{Zheng QU}
\address{CMAP and INRIA\\
         \'Ecole Polytechnique\\
          91128 Palaiseau C\'edex, France}
\email[]{zheng.qu@polytechnique.edu}
\keywords{consensus operator, Hilbert's projective metric, Hopf oscillation seminorm, contraction rate, non linear consensus system, Dobrushin's ergodicity coefficient, quantum channel}
\subjclass[2010]{Primary 47H09; Secondary 47B60, 15B51, 58B20 ,37A30 }
\begin{document}

\maketitle

\begin{abstract}
The analysis of classical consensus algorithms
relies on contraction properties of adjoints of Markov operators,
with respect to Hilbert's
projective metric or to a related family of seminorms
(Hopf's oscillation or Hilbert's seminorm).
We generalize these properties to abstract consensus operators over normal cones, which include the unital completely positive maps (Kraus operators) arising in quantum information theory.
In particular, we show that the contraction rate of such operators, with respect to the Hopf oscillation seminorm,
is given by an analogue of Dobrushin's ergodicity coefficient. We derive
from this result a characterization of the contraction rate of
a non-linear 
 flow, with respect to Hopf's oscillation seminorm and to Hilbert's projective metric.

\end{abstract}

\section{Introduction}
\subsection{Motivation: from Birkhoff's theorem to consensus dynamics}
The {\em Hilbert projective metric} $d_H$ on the interior of a (closed, convex, and pointed) cone $\C$ in a Banach space $\cX$ can be defined by:
$$
d_H(x,y):=\log\inf\{\frac{\beta}{\alpha}:\; \alpha,\beta>0,\; \alpha x\leq y\leq \beta x\},
$$
where $\leq$ is the partial order induced by $\C$, so that $x\leq y$ if $y-x\in \C$. Birkhoff~\cite{birkhoff57} characterized the
contraction ratio with respect to $d_H$ of a linear map $T$ preserving the interior $\C^0$ of the cone $\C$, 
$$
\sup_{x,y\in \C^0}\frac{d_H(Tx,Ty)}{d_H(x,y)}=\tanh(\frac{\Diam T(\C^0)}{4}),\qquad \Diam T(\C^0):=\sup_{x,y\in \C^0} d_H(Tx,Ty)\enspace.
$$
This fundamental result, which implies that a linear map sending the cone $\C$ into its interior is a strict contraction in Hilbert's metric, can be used
to derive the 
  Perron-Frobenius theorem
from
the Banach contraction mapping theorem, see~\cite{Bushell73,Kohlberg82,EvesonNuss95} for more information.

Hilbert's projective metric is related to the following family of seminorms. To any point $\unit \in \C^0$ is associated the seminorm
$$
x\mapsto \hilbert{x}{\unit}:=\inf\{\beta-\alpha: \alpha \unit \leq x\leq \beta \unit\}
$$
which is sometimes called {\em Hopf's oscillation}~\cite{hopf,Bushell73} or {\em Hilbert's seminorm}~\cite{arxiv1}. 
Nussbaum~\cite{nussbaum94}  showed that $d_H$ is precisely the weak Finsler metric obtained when taking $\hilbert{\cdot}{\unit}$ to be the infinitesimal distance at point $\unit$. In other words,
\[
d_H(x,y) = \inf_{\gamma} \int_0^1 \hilbert{\dot{\gamma}(s)}{\gamma(s)} ds 
\]
where the infimum is taken over piecewise $C^1$ paths $\gamma:[0,1]\to \C^0$ 
such that $\gamma(0)=x$ and $\gamma(1)=y$. 
He deduced that the contraction ratio,
with respect to Hilbert's projective metric,
 of a non linear map $f
:\C^0\rightarrow \C^0$ that is positively homogeneous of degree $1$
(i.e.\ $f(\lambda x)=\lambda f(x)$ for all $\lambda>0$), 
can be expressed in terms of the Lipschitz constants of the linear maps $Df(x)$ with respect to a family of Hopf's oscillation seminorms: 
\begin{align}
\sup_{x,y\in U} \frac{d_H(f(x),f(y))}{d_H(x,y)}=\sup_{x\in U}\sup_{z\in \cX ,\; \hilbert{z}{x}\neq 0 } \frac{\hilbert{Df(x)z}{f(x)}}{\hilbert{z}{x}} \enspace .
\end{align}
Hence, to arrive at an explicit formula for the contraction
rate in Hilbert's projective metric of non-linear maps, a basic issue
is to determine the Lipschitz constant $\kappa(T,\unit)$ of linear map $T$ with respect to Hopf's oscillation seminorm, i.e.,
\begin{align}\label{e-contract}
 \kappa(T,\unit): = \sup_{z\in \cX,\; \hilbert{z}{\unit}\neq 0} \frac{\hilbert{T(z)}{T(\unit)}}{\hilbert{z}{\unit}} \enspace .
\end{align}




The problem of computing the contraction rate~\eqref{e-contract} 
also arises in the study of consensus algorithms.
A {\em consensus operator} is a linear map $T$ which preserves the positive cone
$\C$ and fixes a unit element $\unit\in \C^0$: $T(\unit)=\unit$. A discrete time consensus system can be
described by
\begin{align}\label{a-xk+1Tk}
x_{k+1}=T_{k+1}(x_k),\quad k\in \mathbb N,
\end{align}
where $T_1,T_2,\dots$ is a sequence of consensus operators.
This model includes in particular the case in which $\cX=\R^n$, $\C=\R^n_+$, $\unit=(1,\cdots,1)^\top$ and $T_k(x)=T(x):=Ax$, for all $k$, where $A$ is a stochastic matrix. This has been studied in the field of communication networks, control theory and parallel computation~\cite{Hirsch89,Bertsekas89,Boydrand06,Moreau05,Blondel05convergencein,OlshevskyTsitsiklis,AngeliBliman}. Consensus operators
also arise in non-linear potential theory~\cite{dellacherie}.
Other interesting consensus operators are the unital completely positive maps acting on the cone of positive semidefinite matrices, corresponding to quantum channel maps~\cite{sepulchre,ReebWolf2011}. The term {\em noncommutative consensus}
is coined in~\cite{sepulchre} for the corresponding class of dynamical systems.

The main concern of consensus theory is the convergence of the orbit $x_k$ to a 
{\em consensus state}, which is nothing but a scalar multiple of the unit element. 
When $\cX=\R^n$, $\C=\R^n_+$ and $\unit=(1,\dots,1)^\top$, a widely used Lyapunov function for the consensus dynamics, first considered by Tsitsiklis (see~\cite{Tsitsiklis86}), is the ``diameter'' of the state $x$ defined as
$$
\Delta(x)=\max_{1\leq i,j\leq n} (x_i-x_j),
$$ 
which is precisely Hopf's oscillation seminorm $\hilbert{x}{\unit}$. 
It turns out that the latter seminorm
can still be considered as a Lyapunov function for a consensus operator $T$,
with respect to an arbitrary cone. 
When $\C=\R_+^n$, it is well known that if the contraction ratio of $T$ with
respect to the Hopf oscillation seminorm is strictly less than one, and if $T_k= T$,
for all $k$, 
then, the orbits of the consensus dynamics converge exponentially
to a consensus state. We shall see here that the same remains true
in general (Theorem~\ref{th-ex-con}). 
For time-dependent consensus systems, a common approach is to bound
 the contraction ratio of every product of $p$
consecutive  operators $T_{i+p}\circ \dots \circ T_{i+1}$, $i=1,2,\dots$, for a fixed $p$,
see for example~\cite{Moreau05}. Moreover, if $\{T_k:k\geq 1\}$ is a stationary ergodic random process, then the almost sure 
convergence of the orbits of~\eqref{a-xk+1Tk} to a consensus state
can be deduced by showing that $\mathbb E[\log \othernorm{T_{1+p}\dots T_{1}}_H]<0$ for some $p>0$, see Bougerol~\cite{Bougerol93}.
Hence,
in consensus applications, a central issue is again to compute
the contraction ratio~\eqref{e-contract}. 

\subsection{Main results}
Our first result characterizes the contraction ratio~\eqref{e-contract},
in a slightly more general setting. We consider a bounded linear map $T$ from a Banach space $\cX_1$ to a Banach space $\cX_2$. The latter are equipped with
normal cones $\C_i\subset \cX_i$, and {\em unit} elements
$\unit_i \in \C_i^0$. 
\begin{theo}[Contraction rate in Hopf's oscillation seminorm]\label{th-intro1}
 Let $T:\cX_1 \to \cX_2$ 
be a bounded linear map such that $T(\unit_1)\in \mathbb{R} \unit_2$.
Then
\[
\sup_{\substack{z\in \cX_1\\ \hilbert{z}{\unit_1}\neq 0}}
\frac{\hilbert{T(z)}{\unit_2}}{\hilbert{z}{\unit_1}}
 = \frac 1 2 \sup_{\substack{\nu,\pi \in \operatorname{extr} \cP(\unit_2)\\ \nu\perp\pi}} 
\|T^\star(\nu)-T^\star(\pi)\|_T^\star=\sup_{\substack{\nu,\pi \in \operatorname{extr}\cP(\unit_2)\\ \nu\perp\pi}} \sup_{ x\in[0, \unit_1]}\<\nu-\pi, T(x)>.
\]
\end{theo}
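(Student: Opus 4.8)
The plan is to move everything to the dual side, reduce the contraction rate to a maximisation over an order interval, and then read off the answer from the extremal structure of the state space $\cP(\unit_2)=\{\mu\in\C_2^\star:\langle\mu,\unit_2\rangle=1\}$. First I would record two elementary invariances of the ratio being maximised: both Hopf seminorms are positively homogeneous, and the ratio is invariant under translating $z$ by multiples of $\unit_1$, since $T(z+\gamma\unit_1)=Tz+\gamma c\unit_2$ gives $\hilbert{T(z+\gamma\unit_1)}{\unit_2}=\hilbert{Tz}{\unit_2}$ by the hypothesis $T(\unit_1)\in\R\unit_2$. Writing $m_{\unit_1}(z)=\sup\{\alpha:\alpha\unit_1\leq z\}$ (finite because $\unit_1\in\C_1^0$), the vector $z'=z-m_{\unit_1}(z)\unit_1$ lies between $0$ and $\hilbert{z}{\unit_1}\,\unit_1$, so after rescaling every admissible $z$ may be replaced by some $x\in[0,\unit_1]$ without changing the ratio. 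This yields the clean intermediate identity
\[
\kappa(T,\unit_1)=\sup_{x\in[0,\unit_1]}\hilbert{Tx}{\unit_2},
\]
the points $x$ with $\hilbert{x}{\unit_1}=0$ contributing $0$ and hence being harmless.

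Next I would establish the dual description of the oscillation seminorm. Since $\unit_2\in\C_2^0$, the set $\cP(\unit_2)$ is a weak$^\star$-compact base of the dual cone $\C_2^\star$, so for every $y$ one has $\hilbert{y}{\unit_2}=M(y)-m(y)$ with $M(y)=\max_{\mu\in\cP(\unit_2)}\langle\mu,y\rangle$ and $m(y)=\min_{\mu\in\cP(\unit_2)}\langle\mu,y\rangle$; by the Bauer maximum principle these extrema are attained at \emph{extreme} points $\nu_0,\pi_0\in\extr\cP(\unit_2)$. The key point is that this extremal pair is automatically orthogonal: from $m(y)\unit_2\leq y\leq M(y)\unit_2$ (a bipolar argument, valid since $\cP(\unit_2)$ is a base of $\C_2^\star$ and $\C_2$ is closed) the element $u=(y-m(y)\unit_2)/(M(y)-m(y))$ lies in $[0,\unit_2]$ and satisfies $\langle\nu_0,u\rangle=1$, $\langle\pi_0,u\rangle=0$, which is exactly $\nu_0\perp\pi_0$. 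Hence
\[
\hilbert{y}{\unit_2}=\sup_{\substack{\nu,\pi\in\extr\cP(\unit_2)\\\nu\perp\pi}}\langle\nu-\pi,y\rangle .
\]

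Finally I would assemble the pieces. Substituting $y=Tx$ into the orthogonal dual formula and exchanging the two suprema (both are suprema, so this is free) gives
\[
\kappa(T,\unit_1)=\sup_{\substack{\nu,\pi\in\extr\cP(\unit_2)\\\nu\perp\pi}}\ \sup_{x\in[0,\unit_1]}\langle\nu-\pi,Tx\rangle=\sup_{\substack{\nu,\pi\in\extr\cP(\unit_2)\\\nu\perp\pi}}\ \sup_{x\in[0,\unit_1]}\langle T^\star\nu-T^\star\pi,x\rangle,
\]
which is the rightmost expression. The middle expression then follows from the identity $\tfrac12\|\phi\|_T^\star=\sup_{x\in[0,\unit_1]}\langle\phi,x\rangle$, valid for every functional $\phi$ annihilating $\unit_1$ (the characterisation of $\|\cdot\|_T^\star$ on the annihilator of $\unit_1$ established earlier); and $T^\star\nu-T^\star\pi$ does annihilate $\unit_1$, since $\langle T^\star\nu-T^\star\pi,\unit_1\rangle=\langle\nu-\pi,T\unit_1\rangle=0$ using $T\unit_1\in\R\unit_2$ and $\langle\nu,\unit_2\rangle=\langle\pi,\unit_2\rangle=1$.

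The step I expect to be most delicate is the functional-analytic groundwork behind the dual representation: verifying that $\cP(\unit_2)$ is weak$^\star$-compact and a base of $\C_2^\star$ (so extreme points exist and the Bauer principle applies), and the bipolar argument giving $m(y)\unit_2\leq y\leq M(y)\unit_2$ in the possibly infinite-dimensional Banach setting. Both rest on normality of the cone and on each $\unit_i$ being an interior order-unit element. Once they are in place, the orthogonality of the extremal pair drops out of the explicit construction of $u$ above, and the remaining manipulations are routine bookkeeping; notably, positivity of $T$ is never used, only $T(\unit_1)\in\R\unit_2$.
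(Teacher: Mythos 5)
Your overall architecture (reduce to $\sup_{x\in[0,\unit_1]}\hilbert{Tx}{\unit_2}$, dualize, exchange suprema) is sound and close in spirit to the paper's, and the reduction to the order interval, the Bauer-principle step, and the identity $\tfrac12\|\phi\|_T^\star=\sup_{x\in[0,\unit_1]}\langle\phi,x\rangle$ on the annihilator of $\unit_1$ are all fine. The genuine gap is the disjointness claim: the existence of $u\in[0,\unit_2]$ with $\langle\nu_0,u\rangle=1$ and $\langle\pi_0,u\rangle=0$ does \emph{not} imply $\nu_0\perp\pi_0$. Counterexample: in $\R^3$ take $\C=\{(x,y,z):z\geq|x|,\,z\geq|y|\}$ and $\unit=(0,0,1)$, so that $\C^\star=\{(a,b,c):c\geq|a|+|b|\}$ and $\cP(\unit)$ is the square with vertices $(\pm1,0,1)$, $(0,\pm1,1)$. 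The extreme points $\nu=(1,0,1)$ and $\pi=(0,1,1)$ admit the witness $u=(\tfrac12,-\tfrac12,\tfrac12)\in[0,\unit]$, with $\langle\nu,u\rangle=1$ and $\langle\pi,u\rangle=0$; yet $\nu-\pi=(1,-1,0)=(\tfrac12,-\tfrac12,1)-(-\tfrac12,\tfrac12,1)$ is a difference of two other elements of $\cP(\unit)$, so by Lemma~\ref{l-nuperppi} we have $\nu\not\perp\pi$. Hence your chosen maximizer/minimizer pair need not be disjoint, and the inequality $\hilbert{y}{\unit_2}\leq\sup_{\nu\perp\pi}\langle\nu-\pi,y\rangle$ --- exactly the direction you need --- is left unproved.

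The paper closes this gap by a different mechanism: it identifies Hopf's oscillation seminorm with the quotient norm of Thompson's norm (Lemma~\ref{l-quonorm}), characterizes the extreme points of the dual unit ball $B_H^\star(\unit_2)$ as the differences $\nu-\pi$ of \emph{disjoint} extreme points of $\cP(\unit_2)$ (Theorem~\ref{p-carac-ext}), and then restricts the supremum of the convex, weak-star lower semicontinuous map $\mu\mapsto\|T^\star(\mu)\|_H^\star$ over $B_H^\star(\unit_2)$ to its extreme points via Krein--Milman. The disjointness thus comes from the global extremal structure of the ball, not from a pointwise witness $u$. If you want to keep your pointwise formulation (attaining $\hilbert{y}{\unit_2}$ for each fixed $y=Tx$), that statement is true, but proving it still requires Theorem~\ref{p-carac-ext} or an equivalent argument showing that \emph{some} disjoint extreme pair attains the oscillation; your construction only produces a pair that may fail to be disjoint.
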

The notation and notions used in this theorem are
detailed in Section~\ref{sec-operatornormch}.
In particular, we denote by 
the same symbol $\leq$ 
the order relations induced by the two cones $\C_i$, $i=1,2$;
$\mathcal{P}(\unit_2)=\{\mu\in \C_2^\star :\, \<\mu,\unit_2> =1\}$ denotes the
abstract {\em simplex} of the dual Banach space $\cX_2^\star$ of $\cX_2$,
where $\C_2^\star :=\{\mu\in \cX_2^\star:\, \<\mu,x> \geq 0 ,\;\forall x\in \C_2\}$ is the {\em dual cone} of $\C_2$;
$\extr$ denotes the extreme points of a set; $\bot$ denotes a certain
{\em disjointness} relation, which will be seen to generalize
the condition that two measures have disjoint supports;
$[u,v]:=\{x\in \cX_1:\, u\leq x\leq v\}$, for all $u,v\in \cX_1$,
and $T^\star$ denotes the adjoint of $T$. 
We shall make use of the following norm, which we call {\em Thompson's norm},
\[ \|z\|_{T} = \inf\{\alpha>0:\, -\alpha \unit_1\leq z\leq \alpha\unit_1\}\]
on the space $\cX_1$, and denote by $\|\cdot\|_T^\star$ the dual norm.

When $\C=\R_+^n$, and $T(z)=Az$ for some stochastic matrix $A$, we shall
see that the second supremum in Theorem~\ref{th-intro1} is simply
\[
 \frac 1 2 \max_{i<j} \sum_{1\leq k\leq n}|A_{ik}-A_{jk}| 
=  \frac 1 2 \max_{i<j} \|A_{i\cdot}-A_{j\cdot}\|_{\ell_1} \enspace ,
\]
where $A_{i\cdot}$ denotes the $i$th row of the matrix $A$.
This quantity is called {\em Doeblin contraction coefficient} in the theory of Markov chains; it is known to determine the contraction rate of the
adjoint $T^\star$ with respect to the $\ell_1$ (or total variation) metric,
see~\cite{PeresLevin}. Moreover, the last supremum in Theorem~\ref{th-intro1}
can be rewritten more explicitly as
\[
1-\min_{i<j}\sum_{s=1}^n \min(A_{is},A_{js}) \enspace,
\]
a term which is known as {\em Dobrushin's ergodicity coefficient}~\cite{Dobrushin56}. Note that in general, the norm $\|\cdot\|_T^\star$ can be thought of as an
abstract version of the $\ell_1$ or total variation norm.

When specializing to a unital completely positive map $T$ on the cone
of positive semidefinite matrices, representing a quantum channel~\cite{sepulchre,ReebWolf2011}, we shall see that the last supremum in Theorem~\ref{th-intro1} coincides with the following expression, which provides a non commutative analogue of Dobrushin's ergodicity coefficient (see Corollary~\ref{coro-noncommDobr}):
$$
1-
\displaystyle\min_{\substack{X=(x_1,\dots,x_n)\\XX^*=I_n}}\min_{\substack{u,v:u^*v=0\\u^*u=v^*v=1}} \sum_{i=1}^n \min\{u^*T(x_ix_i^*)u,v^*T(x_ix_i^*)v\}
$$

Theorem~\ref{th-intro1} shows in particular, when $\C=\R_+^n$,
that
the contraction rate of $T$ with respect to Hopf's oscillation seminorm
is the same as the contraction rate of $T^\star$ with respect
to the $\ell_1$ norm, given by the classical formulas of Doeblin
and Dobrushin.

Theorem~\ref{th-intro1} can be thought of as the dual of a result of Reeb, Kastoryano, and Wolf~\cite[Prop.~12]{ReebWolf2011}, who gave a
closely related formula, without the disjointness restriction
(and assuming that the dimension is finite) 
for the contraction rate of $T^\star$ with respect to a certain
``base norm'', which is the dual of Thompson's norm. 
Thus, Theorem~\ref{th-intro1} 
characterizes the contraction rate of $T$, whereas Proposition~12
of~\cite{ReebWolf2011} characterizes the contraction rate of the adjoint $T^\star$. We shall derive here the equality of both contraction rates
from general duality considerations, exploiting
the observation that Hopf's oscillation seminorm coincides with the quotient norm of Thompson's norm (Lemma~\ref{l-quonorm}). Then, we deduce Theorem~\ref{th-intro1}
from 
a characterization of the extreme points of the unit ball in the dual 
space of the quotient normed space (Theorem~\ref{p-carac-ext}).
The duality between both approaches is discussed more
precisely in
%
Remarks~\ref{rem-basenormdisnorm} and~\ref{rem-comReeb}.

Then, we derive analogous results for flows.
In particular, some consensus systems are driven by non linear ordinary differential equations~\cite{Strogatz00fromkuramoto,Saber03}:
$$
\dot x=\phi(x)
$$
where $\phi(x+\lambda \unit)=\phi(x)$ for all $\lambda \in \R$. 
The subclass of maps $\phi$ that yield an order preserving flow
is of interest in non-linear potential theory. In this context,
the opposite of the map $\phi$ has been called a 
{\em derivator} by Dellacherie~\cite{dellacherie}.

For such systems, it is interesting to consider the contraction 
rate of the flow with respect to Hopf's oscillation seminorm. In particular, if the contraction rate is negative, we deduce an exponential convergence 
of the orbits of the system to a consensus state. For simplicity, we only consider here a flow on finite dimensional space (then, a -closed, convex, and pointed- cone is automatically normal).
\begin{theo}[Contraction rate of flows with respect to Hopf's oscillation]\label{th-intro3}
The contraction rate $\alpha(U)$ with respect
to Hopf's oscillation seminorm, of the flow 
of the differential equation $\dot{x}=\phi(x)$, restricted to
a convex open subset $U\subset \cX$, is given by:
$$
\alpha(U)=\sup_{x\in U} h(D\phi(x))
$$
\end{theo}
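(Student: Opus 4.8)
The plan is to reduce the statement to the classical assertion that the contraction rate of a flow equals the supremum over the region of the logarithmic norm (matrix measure) of the Jacobian of the vector field, after passing to a quotient where Hopf's oscillation seminorm becomes a genuine norm. First I would exploit the translation invariance $\phi(x+\lambda\unit)=\phi(x)$. Differentiating at $\lambda=0$ gives $D\phi(x)\unit=0$, and the defining differential equation shows that the flow $\Phi_t$ commutes with translation by the unit: setting $y(t):=\Phi_t(x)+\lambda\unit$ one checks $\dot y=\phi(\Phi_t(x))=\phi(y)$, so $\Phi_t(x+\lambda\unit)=\Phi_t(x)+\lambda\unit$. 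Hence $\Phi_t$ descends to a flow $\bar\Phi_t$ on the quotient space $\bar\cX:=\cX/\R\unit$. Since $\hilbert{\cdot}{\unit}$ is exactly the quotient norm $\|\cdot\|$ induced on $\bar\cX$ by Thompson's norm (Lemma~\ref{l-quonorm}), the contraction rate $\alpha(U)$ equals the contraction rate of $\bar\Phi_t$ measured in $\|\cdot\|$. I would then identify $h(D\phi(x))$ with the logarithmic norm $\mu(\overline{D\phi(x)})$ of the induced Jacobian on $(\bar\cX,\|\cdot\|)$, reading $h$ as the right derivative at $0^+$ of $t\mapsto\kappa(I+tD\phi(x),\unit)$ and using that $\kappa(\cdot,\unit)$ is the operator seminorm that descends to the quotient operator norm.

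With this identification the classical logarithmic-norm machinery becomes available. For the upper bound I would use the variational equation $\dot v=\overline{D\phi(\bar\Phi_t(\bar x))}\,v$ together with the fundamental inequality $D^+\|v(t)\|\le \mu(\overline{D\phi(\bar\Phi_t(\bar x))})\,\|v(t)\|\le\big(\sup_{x\in U}h(D\phi(x))\big)\|v(t)\|$, where $D^+$ is the upper right Dini derivative. Integrating this Gronwall estimate along a piecewise-$C^1$ path joining two points $\bar x,\bar y$ and propagating the path by $\bar\Phi_t$, and using that in a normed space the quotient distance is realized by straight segments, yields $\|\bar\Phi_t(\bar x)-\bar\Phi_t(\bar y)\|\le e^{t\sup_U h(D\phi)}\,\|\bar x-\bar y\|$, so that $\alpha(U)\le\sup_{x\in U}h(D\phi(x))$.

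For the matching lower bound I would show the infinitesimal rate is attained. Fixing $x^\star\in U$ and $\varepsilon>0$, the interpretation of $h$ as a logarithmic norm supplies a displacement $z$ with $\hilbert{z}{\unit}=1$ along which $\hilbert{(I+tD\phi(x^\star))z}{\unit}\ge 1+t\big(h(D\phi(x^\star))-\varepsilon\big)+o(t)$; feeding the initial conditions $x^\star$ and $x^\star+sz$ into the flow and Taylor expanding in $t$ and $s$ shows the short-time contraction rate of $\Phi_t$ near $x^\star$ is at least $h(D\phi(x^\star))-\varepsilon$, whence $\alpha(U)\ge\sup_{x\in U}h(D\phi(x))$. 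The main obstacle I anticipate is precisely this lower bound: one must control the Taylor error uniformly and justify interchanging the limits $t\to 0^+$, $s\to 0^+$ with the supremum over the region, while the upper bound demands care with the Dini derivative of the seminorm along trajectories and with keeping the relevant trajectories and connecting paths inside $U$. The conceptual steps that unlock everything are the descent to the quotient norm and the identification $h=\mu$.
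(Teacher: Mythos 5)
Your proposal follows essentially the same route as the paper: the identification of $h(D\phi(x))$ with the logarithmic norm of the Jacobian in the quotient (Hopf) norm is exactly Propositions~\ref{pr-alphaHilsemiL} and~\ref{pr-alphaHilsemiL2}, the upper bound is obtained there too via the variational equation $\dot{DM_t(x)z}=D\phi(M_t(x))(DM_t(x)z)$, a Gronwall/Dini-derivative estimate, and integration along the straight segment $\gamma(s)=sx+(1-s)y$ (which stays in $U$ by convexity), and the lower bound by differentiating the contraction inequality for $DM_h(x)$ at $h=0^+$. The proof is correct at the same level of rigor as the paper's, including the same points you flag as delicate (keeping propagated segments inside $U$, and justifying the semiderivative computation behind $h=\mu$).
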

Here $h(L)$ is defined to be the contraction rate in Hopf's oscillation seminorm of the {\em linear} differential equation $\dot x=L(x)$. 
It is given explicitly by
(Proposition~\ref{pr-alphaHilsemiL}):
$$
h(L):=-\inf_{\nu,\pi\in \extr\pP(\unit)}\inf_{\substack{x \in \extr([0, \unit]) \\ \<\nu,x>+\<\pi,\unit-x>=0}} \<\nu,L(x)>+\<\pi,L(\unit-x)>.
$$ 
For illustration we apply this result to some equations in $\R^n$. 

Our main results also include analogues of Theorem~\ref{th-intro1} and Theorem~\ref{th-intro3} concerning
the contraction rate in Hilbert's projective metric of a non-linear map
(Corollary~\ref{coro-Hilbertmetricnonlinearmap}), as well as a characterization of the contraction
rate in the same metric of a non-linear flow (Theorem~\ref{th-Hilbertmetricnonlinearflow}).

The paper is organized as follows. In Section~\ref{sec-pre} we give preliminary results on Thompson's metric, Hilbert's metric, and characterize
the extreme points of the dual unit ball. In Section~\ref{sec-operatornormch} we prove Theorem~\ref{th-intro1} and derive as corollary the analoguous result with 
respect to Hilbert's projective metric (Corollary~\ref{coro-Hilbertmetricnonlinearmap}).  In Section~\ref{sec-consensusoperator} we apply Theorem~\ref{th-intro1} to discrete time consensus operators. We determine the contraction rate of a linear flow in Hopf's oscillation seminorm in Section~\ref{sec-linearequation}. In Section~\ref{sec-nonlinearconsensus} we show Theorem~\ref{th-intro3} and
discuss some applications to non-linear consensus dynamics. In Section~\ref{sec-rateflowHilbertsmetric} we prove the analogue of Theorem~\ref{th-intro3} with respect to Hilbert's projective metric and show its applications.

\section{Preliminaries}\label{sec-pre}

\subsection{Thompson's norm and Hopf's oscillation seminorm }

We consider a real Banach space $(\cX, \othernorm{\cdot})$ and its dual space $\cX^*$. 
Let $\C\subset \cX$ be a closed pointed convex cone with non empty interior $\C^ 0$, i.e., $\alpha \C\subset \C$ for $\alpha \in \R^{+}$, $\C+\C\subset \C$ and $\C\cap (-\C)=0$.
 We define the partial order $\leq $ 
induced by $\C$ on $\cX$ by
\[
x\leq y \Leftrightarrow y-x \in \C  .
\]
The {\em dual cone} of $\C$ is:
$$
\C^\star:=\{z \in \cX^\star| \<z,x>\geq 0, \enspace \forall x\in \C\}.
$$
Since $\C$ is a closed convex cone, it follows from the strong separation theorem that 
\begin{align}\label{a-xinCiff}
x\in \C \Leftrightarrow \<z,x>\geq 0,\enspace \forall z\in \C^\star.
\end{align}
For all $x\leq y$ we define the {\em order interval}:
$$
[x,y]:=\{z\in \cX|x\leq z\leq y\}.
$$
For $x\in \cX$ and $y\in \C^ 0$, following~\cite{nussbaum88}, we define
\begin{align}\label{a-eq4}
\begin{array}{l}
M(x/y):=\inf\{t\in \R:x\leq ty\},\\
m(x/y):=\sup\{t\in \R:x\geq ty\} .
\end{array}
\end{align}
Observe
that since $y\in\C^0$, and since $\mathcal{C}$ is closed
and pointed, the two sets in~\eqref{a-eq4} are non-empty, closed, and bounded
from below and from above, respectively. In particular, $m$ and $M$ take
finite values.
The difference between $M$ and $m$ is called \firstdef{oscillation}~\cite{Bushell73}:
$$
\hilbert{x}{y}:=M(x/y)-m(x/y).
$$

Let $\unit$ 
denote a distinguished element in the interior of $\C$, which we shall
call a {\em unit}. We define
\[
\|x\|_{T}:=\max( M(x/\unit), -m(x/\unit))
\]
which we call {\em Thompson's norm}, with respect to the element $\unit$, and
\[
\|x\|_H:= \hilbert{x}{\unit}
\]
which we call Hopf's oscillation seminorm with respect to the element $\unit$. 

We assume that the cone is {\em normal}, meaning that there exists a constant $K>0$ such that
$$
0\leq x\leq y\Rightarrow \othernorm{x}\leq K\othernorm{y}.
$$
It is known that under this assumption the two norms $\othernorm{\cdot}$ and $\othernorm{\cdot}_T$ are equivalent, see~\cite{nussbaum94}. 
Therefore the space $\cX$ equipped with the norm $\othernorm{\cdot}_T$ is a Banach space. Since Thompson's norm $\othernorm{\cdot}_T$ is always defined
with respect to a particular element, we write $(\cX,\unit,\othernorm{\cdot}_T)$ instead of $(\cX,\othernorm{\cdot}_T)$.

By the definition and~\eqref{a-xinCiff}, Thompson's norm with respect to $\unit$ can be calculated by:
\begin{align}\label{e-carac-f}
\|x\|_T = \sup_{z \in \C^\star} \frac{|\<z,x>|}{\<z,\unit>}.
\end{align}

\begin{example}\label{rem1}
 We consider the space $\cX=\R^n$, the closed convex cone $\C=\R^n_+$ and the unit element $\unit=\bold 1:=(1,\dots,1)^T$. 
It can be checked that 
 Thompson's norm with respect to $\bold 1$ is nothing but the sup norm
$$
\othernorm{x}_T=\max_i |x_i|=\othernorm{x}_{\infty},$$
whereas Hopf's oscillation seminorm with respect to $\bold 1$ is the so called
{\em diameter}:
$$
\othernorm{x}_H=\max_{1\leq i,j\leq n} (x_i-x_j)
=\Delta(x).
$$
\end{example}

\begin{example}\label{ex-Symn}
 Let $\cX=\sym_n$, the space of Hermitian matrices of dimension $n$ and $\C=\sym_n^+\subset \sym_n$, the cone of positive semi-definite matrices. Consider the unit element $\unit=I_n$, the identity matrix of dimension $n$. Then Thompson's norm with respect to $I_n$ is nothing but the sup norm of the spectrum
of $X$, i.e.,
$$
\othernorm{X}_T=\max_{1\leq i\leq n} \lambda_i(X)=\othernorm{\lambda(X)}_{\infty},
$$
where $\lambda(X):=(\lambda_1(X),\dots,\lambda_n(X))$, $\lambda_1(X)\leq \dots\leq \lambda_n(X)$, is the vector of ordered eigenvalues of $X$, counted
with multiplicities,
whereas Hopf's oscillation seminorm with respect to $I_n$ is the diameter of the spectrum:
$$
\othernorm{X}_H=\max_{1\leq i,j\leq n} (\lambda_i(X)-\lambda_j(X))=\Delta(\lambda(X)).
$$
\end{example}


\subsection{Simplex in the dual space and dual unit ball}
We denote by $(\cX^\star,\unit,\othernorm{\cdot}_T^\star)$ the dual normed space of $(\cX,\unit,\othernorm{\cdot}_T)$ where
 the dual norm $\othernorm{\cdot}_T^\star$ of a continuous linear functional $z \in \cX^\star$ is defined by:
$$\othernorm{z}_T^\star:=\sup_{\othernorm{x}_T=1} \<z,x>.$$
We define:
\begin{align}\label{a-simp}\cP(\unit):=\{\mu \in \C^\star\mid \<\mu, \unit> =1\}\end{align}
the {\em simplex} with respect to $\unit$ of the dual Banach space $(\cX^\star,\unit,\|\cdot\|_T^\star)$.
\begin{rem}\label{rem-rn1}
When $\cX=\R^n$, $\C=\R^n_+$ and $\unit=\bold 1$ (Example~\ref{rem1}), the dual space $\cX^\star$ is $\cX=\R^n$ itself and the dual norm $\othernorm{\cdot}_T^\star$ is the $\ell_1$ norm:
$$
\othernorm{x}_T^\star=\sum_i |x_i|=\othernorm{x}_1.
$$ 
The simplex $\cP(\bold 1)$ defined in~\eqref{a-simp} is the simplex in $\R^n$ in the usual sense:
$$
\cP(\bold 1)=\{\nu\in \R^n_+: \sum_i \nu_i=1\},
$$
i.e., the set of probability measures on the discrete space $\{1,\dots,n\}$.
\end{rem}
\begin{rem}\label{rem-sym1}
 In the case of $\cX=\sym_n$, $\C=\sym_n^+$ and $\unit=I_n$ (Example~\ref{ex-Symn}), the dual space $\cX^\star$ is $\cX=\sym_n$ itself and the dual norm $\othernorm{\cdot}_T^\star$ is the trace norm:
$$
\othernorm{X}_T^\star=\sum_{1\leq i\leq n} |\lambda_i(X)|=\othernorm{X}_1,\quad X\in \sym_n
$$ 
The simplex $\cP(I_n)$ defined in~\eqref{a-simp} is the set of positive semi-definite matrices with trace $1$:
$$
\cP(I_n)=\{\rho\in \sym^n_+: \trace(\rho)=1\}.
$$
The elements of this set are called {\em density matrices} in quantum
physics, in which they are thought of as noncommutative analogues
of probabilities measure.
\end{rem}

The next lemma relates $\cP(\unit)$ and the unit ball $B_T^\star(\unit)$ of the space $(\cX^\star,\unit,\othernorm{\cdot}_T^\star)$. We denote by $\operatorname{conv}(S)$ the convex hull of a set $S$.

\begin{lemma}\label{l-BTstar}
The unit ball $B_T^\star(\unit)$
of the space $(\cX^\star,\unit,\othernorm{\cdot}_T^\star)$, satisfies
\begin{align}
B_T^\star(\unit) = \operatorname{conv}(\cP(\unit)\cup -\cP(\unit))
\label{e-bstar}
\end{align}
\end{lemma}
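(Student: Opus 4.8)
The plan is to prove the two inclusions in~\eqref{e-bstar} separately; the inclusion $\supseteq$ is elementary, while $\subseteq$ is the substantive one and will be obtained by a Hahn--Banach separation in the weak-$\star$ topology of $\cX^\star$. As a preliminary I would record that the \emph{primal} unit ball is an order interval: from $\|x\|_T=\max(M(x/\unit),-m(x/\unit))$ and the closedness of the sets in~\eqref{a-eq4} one checks $\{x:\|x\|_T\leq 1\}=[-\unit,\unit]$. For $\supseteq$, let $\mu\in\cP(\unit)$; if $x\in[-\unit,\unit]$ then $\unit-x\geq 0$ and $\unit+x\geq 0$ give $-1\leq\<\mu,x>\leq 1$, so $\|\mu\|_T^\star\leq 1$. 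Hence $\cP(\unit)$ and, by symmetry, $-\cP(\unit)$ are contained in the convex set $B_T^\star(\unit)$, and therefore so is $\operatorname{conv}(\cP(\unit)\cup-\cP(\unit))$.

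The engine of the reverse inclusion is the identity
\[
\sup_{\mu\in\cP(\unit)}\<\mu,x>=M(x/\unit),\qquad \inf_{\mu\in\cP(\unit)}\<\mu,x>=m(x/\unit),
\]
valid for every $x\in\cX$. I would derive it from~\eqref{a-xinCiff}: the relation $t\unit-x\in\C$ holds iff $\<\mu,t\unit-x>\geq 0$ for all $\mu\in\C^\star$, and since $\unit\in\C^0$ forces $\<\mu,\unit>>0$ for every nonzero $\mu\in\C^\star$, rescaling each such $\mu$ into $\cP(\unit)$ turns this into $t\geq\<\mu,x>$, giving the formula for $M$; the one for $m$ is symmetric. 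Writing $K:=\operatorname{conv}(\cP(\unit)\cup-\cP(\unit))$ and noting that $K=\{\lambda\nu-(1-\lambda)\pi:\nu,\pi\in\cP(\unit),\ \lambda\in[0,1]\}$, this yields $\sup_{w\in K}\<w,x>=\max(M(x/\unit),-m(x/\unit))=\|x\|_T$ for all $x\in\cX$.

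The hard part, and the place where the infinite-dimensional setting is not merely bookkeeping, is to guarantee that $K$ is weak-$\star$ closed, so that a point outside it can be strictly separated. Here I would argue that $\cP(\unit)$ is weak-$\star$ compact: it is weak-$\star$ closed, being $\C^\star\cap\{\mu:\<\mu,\unit>=1\}$, and it sits inside the weak-$\star$ compact ball $B_T^\star(\unit)$ (Banach--Alaoglu applies since normality of $\C$ makes $\|\cdot\|_T$ equivalent to $\|\cdot\|$). Because $\cP(\unit)$ and $-\cP(\unit)$ are convex, $K$ is exactly the image of the compact set $\cP(\unit)\times\cP(\unit)\times[0,1]$ under the (weak-$\star$ continuous) map $(\nu,\pi,\lambda)\mapsto\lambda\nu-(1-\lambda)\pi$, hence $K$ is weak-$\star$ compact, in particular weak-$\star$ closed.

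With $K$ weak-$\star$ compact and convex, I would conclude by contradiction. If $z\in B_T^\star(\unit)\setminus K$, strict separation in the locally convex space $(\cX^\star,\text{weak-}\star)$ provides a weak-$\star$ continuous functional, necessarily of the form $\<\cdot,x>$ for some $x\in\cX$, together with a constant $c$ such that $\<z,x>>c\geq\sup_{w\in K}\<w,x>=\|x\|_T$. Since $\<z,x>>\|x\|_T\geq 0$ forces $x\neq 0$ and thus $\|x\|_T>0$, dividing by $\|x\|_T$ gives $\|z\|_T^\star\geq\<z,x>/\|x\|_T>1$, contradicting $z\in B_T^\star(\unit)$. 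This proves $B_T^\star(\unit)\subseteq K$ and, with the first paragraph, the equality~\eqref{e-bstar}.
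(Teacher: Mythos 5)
Your proof is correct and follows essentially the same route as the paper's: the identity $\othernorm{x}_T=\sup_{\mu\in\cP(\unit)\cup-\cP(\unit)}\langle\mu,x\rangle$, weak-star Hahn--Banach separation, and Banach--Alaoglu compactness of $\cP(\unit)$. The only difference is organizational: you establish weak-star compactness of $\operatorname{conv}(\cP(\unit)\cup-\cP(\unit))$ upfront, as the continuous image of the compact set $\cP(\unit)\times\cP(\unit)\times[0,1]$, and separate from it directly, whereas the paper first separates from the weak-star closed convex hull and then dispenses with the closure by a subnet-extraction argument.
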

\begin{proof}
For simplicity we write $\pP$ instead of $\pP(\unit)$ and $B_T^ \star$ instead of $B_T^ \star(\unit)$ in the proof.
It follows from~\eqref{e-carac-f} that
\begin{align}\label{e-carac-norm}
\|x\|_T = \sup_{\mu \in \cP} |\<\mu,x>| = \sup_{\mu \in \cP\cup -\cP} \<\mu,x> \enspace .
\end{align}
Hence $\|z\|_T^\star\leq 1$
if and only if, for all $x\in \cX$,
\begin{align}
 \<z,x> \leq \|x\|_T = \sup_{\mu \in \cP\cup -\cP} \<\mu,x> \enspace .
\label{e-elt}
\end{align}
By the strong separation theorem~\cite[Thm 3.18]{MR1831176}, if $z$ did
not belong to the closed convex hull $\overline{\operatorname{conv}}(\cP\cup -\cP)$, the closure being understood in the weak star topology of $\cX^\star$,
there would exist a vector $x\in \cX$ and a scalar $\gamma$
such that $\<z,x> > \gamma \geq \<\mu,x> $, for all $\mu\in \cP\cup -\cP$,
contradicting~\eqref{e-elt}.
\[
B_T^\star = \overline{\operatorname{conv}}(\cP\cup -\cP)
\enspace .
\]
We claim that the latter closure operation can be dispensed
with. Indeed, by the Banach Alaoglu theorem, $B_T^\star$ is weak-star compact.
Hence, its subset $\cP$, which is weak-star closed,
is also weak-star compact. \typeout{proof in appendix not needed for ECC}
If $\mu\in B_T^\star$, by the characterization
of $B_T^\star$ above, $\mu$ is a limit, in the weak star topology,
of a net $\mu_a = s_a \nu_a -t_a \pi_a$ with
$s_a+t_a=1$, $s_a,t_a\geq 0$ and $\nu_a,\pi_a \in \cP$ for $a\in \mathcal A$.
By passing to a subnet we can assume that $s_a,t_a:a\in \mathcal A$ converge respectively to $s,t \in [0,1]$ such that $s+t=1$ and $\nu_a,\pi_a:a\in \mathcal A$ converge respectively in the weak-star topology to $\nu,\pi \in \cP$. 
It follows 
that $\mu = s\nu-t\pi \in \operatorname{conv}(\cP\cup -\cP)$.
\end{proof}
\begin{rem}\label{rem-basenormdisnorm}
We make a comparison with the paper~\cite{ReebWolf2011}. 
In a finite dimensional setting, Reeb, Kastoryano, and Wolf defined a \firstdef{base} $\mathcal{B}$ of a proper cone $\mathcal{K}$ in a vector space $\mathcal{V}$ to be 
a cross section of this cone, i.e.,
they take $\mathcal{B}$ to be the intersection of the cone $\mathcal{K}$ with a hyperplane given by a linear functional in the interior of the dual cone. So,
$\mathcal{V}$ corresponds to $\mathcal{X}^\star$ here, and, since $\mathcal{V}$ is of finite dimension, we can identify the dual of $\mathcal{V}$ 
to $\mathcal{X}$, and consider the dual cone $\mathcal{C}\simeq \mathcal{K}^\star\subset \mathcal{V}^\star \simeq  \mathcal{X}$.
Modulo this identification, the base $\mathcal{B}$ can be written precisely as $\mathcal{B}=\{\mu\in \mathcal{K}:\, \<\mu,\unit >=1\}$ for some $\unit$ in the interior of $\mathcal{K}^\star$, so that the base $\mathcal{B}$ coincides with the simplex $\mathcal{P}(\unit)$ considered here. They defined the \firstdef{base norm} of $\mu\in \mathcal{ V} $ with respect to $\mathcal{B}$ by:
$$
\othernorm{\mu}_{\mathcal{B}}=\inf\{\lambda\geq 0| \mu\in \lambda \operatorname{conv}(\mathcal{B}\cup-\mathcal{B})\}.
$$
Lemma~\ref{l-BTstar} shows that the base norm coincides with the dual norm of 
Thompson's norm: for $\nu\in \cX^{\star}$,
$$
\othernorm{\nu}_{\mathcal{B}}=\inf\{\lambda \geq 0|\nu \in \lambda \operatorname{conv}(\cP(\unit)\cup -\cP(\unit))\}
=\inf\{\lambda \geq 0|\nu \in \lambda B_T^\star\}=\othernorm{\nu}_T^\star.
$$
The set
$$
\tilde M=\{x\in \mathcal{V}^\star|0\leq x\leq \unit\}.
$$
is also considered in~\cite{ReebWolf2011}; leading to define
the \firstdef{distinguishability norm} of $\mu\in \mathcal{V}$ by:
\begin{align}\label{a-vxunit}
\othernorm{\mu}_{\tilde M}=\sup_{0\leq x\leq \unit} \<\mu,2x-\unit>.
\end{align}
It is shown there that
$$
\othernorm{\mu}_{\tilde M}=\othernorm{\mu}_{\mathcal{B}} \enspace .
$$

\end{rem}

\subsection{Extreme points of the dual unit ball}
We first show that Hopf's oscillation seminorm coincides with the norm 
on the quotient Banach space of $(\cX, \unit,2\othernorm{\cdot}_T)$ by the closed subspace $\R\unit$. 
\begin{lemma}\label{l-quonorm}
For all $x\in \cX$, we have:
\[
\|x\|_H = 2 \inf_{\lambda\in \R}\|x+\lambda \unit\|_T 
\]
\end{lemma}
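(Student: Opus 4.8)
The plan is to reduce the identity to an elementary one-variable optimization, using the two scalar quantities $M := M(x/\unit)$ and $m := m(x/\unit)$ attached to $x$. Recall from~\eqref{a-eq4} that these are finite (as noted just after~\eqref{a-eq4}, since $\unit\in\C^0$ and $\C$ is closed and pointed), that by definition $\|y\|_T=\max(M(y/\unit),-m(y/\unit))$ for every $y$, and that $\|x\|_H=\hilbert{x}{\unit}=M-m$. One checks $m\le M$ from pointedness of $\C$, so that $M-m\ge 0$, consistently with $\|\cdot\|_H$ being a seminorm. Thus the whole statement will follow once I compute $\inf_{\lambda\in\R}\|x+\lambda\unit\|_T$ in terms of $M$ and $m$.

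First I would record the translation covariance of $M$ and $m$. Directly from~\eqref{a-eq4}, substituting $s=t-\lambda$ in the defining infimum and supremum, one gets $M((x+\lambda\unit)/\unit)=M+\lambda$ and $m((x+\lambda\unit)/\unit)=m+\lambda$ for every $\lambda\in\R$. Plugging this into the formula for Thompson's norm yields
\[
\|x+\lambda\unit\|_T=\max(M+\lambda,\,-m-\lambda).
\]

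It remains to minimize the right-hand side over $\lambda$. This is the maximum of one affine function of $\lambda$ with slope $+1$ and one with slope $-1$; the first is increasing and the second decreasing, so the minimum of their maximum is attained at the crossing point $M+\lambda=-m-\lambda$, i.e.\ at $\lambda^\star=-(M+m)/2$, where the common value is $(M-m)/2$. Hence $\inf_{\lambda\in\R}\|x+\lambda\unit\|_T=(M-m)/2=\tfrac12\hilbert{x}{\unit}=\tfrac12\|x\|_H$, which rearranges into the claimed formula. The argument is essentially routine; the only points needing a word of care are the finiteness of $M$ and $m$ (already guaranteed by the standing normality and pointedness hypotheses) and the translation identities, which are precisely where the special role of the direction $\unit$ enters. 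There is no genuine obstacle here, and in particular the infimum is attained at $\lambda^\star$.
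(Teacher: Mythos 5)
Your proof is correct and follows essentially the same route as the paper: both write $\|x+\lambda\unit\|_T=\max(M(x/\unit)+\lambda,\,-m(x/\unit)-\lambda)$ and minimize over $\lambda$ at the crossing point $\lambda=-(M+m)/2$, obtaining the value $(M-m)/2=\tfrac12\|x\|_H$. Your version merely spells out the translation identities $M((x+\lambda\unit)/\unit)=M+\lambda$ and $m((x+\lambda\unit)/\unit)=m+\lambda$, which the paper leaves implicit.
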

\begin{proof}
$\|x+\lambda \unit\|_T = (M(x/\unit)+\lambda)\vee 
(-m(x/\unit)-\lambda)$ is minimal when
$(M(x/\unit)+\lambda)=
(-m(x/\unit)-\lambda)$. Substituting the value of $\lambda$ obtained
in this way in $\|x+\lambda \unit\|_T$, we arrive at the announced formula.
\end{proof}
A standard result~\cite[P.88]{Conway90FA} of functional analysis shows that if
$\mathcal{W}$ is a closed subspace of a Banach space $(\mathcal{X},\othernorm{\cdot})$, then the quotient space $\cX/\mathcal{W}$ is complete.
Besides, the dual of the quotient space $\mathcal{X}/\mathcal{W}$ can be identified
isometrically to the space of continuous linear forms on $\mathcal{X}$ that vanish on $\mathcal{W}$, equipped with the dual norm $\othernorm{\cdot}^\star$ of $\mathcal{X}^\star$. 
Specializing this result to $\mathcal{W}=\R\unit$, we get:
\begin{lemma}\label{l-norm-rel}
The quotient normed space $(\cX/\R\unit, \othernorm{\cdot}_H)$ is a Banach space. Its dual is
 $(\cM(\unit), \othernorm{\cdot}_H^\star)$ where 
$$
\cM(\unit):=\{\mu \in \cX^\star | \<\mu, \unit>=0\},
$$
and
\begin{align}\label{a-hs-ts}
\othernorm{\mu}_H^\star:=\frac{1}{2} \othernorm{\mu}_T^\star, \enspace \forall \mu \in \cM(\unit).
\end{align}
\end{lemma}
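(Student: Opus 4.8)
The plan is to obtain the statement as a direct specialization of the quoted standard functional-analytic result, applied not to Thompson's norm $\|\cdot\|_T$ itself but to its rescaling $2\|\cdot\|_T$, and to the one-dimensional subspace $\W=\R\unit$. First I would record the two hypotheses needed to invoke that result. The pair $(\cX,2\|\cdot\|_T)$ is a Banach space: indeed $(\cX,\unit,\|\cdot\|_T)$ is complete (as noted earlier, normality of the cone makes $\|\cdot\|_T$ equivalent to the ambient norm $\|\cdot\|$), and multiplying a complete norm by the positive constant $2$ yields an equivalent, hence still complete, norm. Moreover $\W=\R\unit$ is a finite-dimensional, therefore closed, subspace of $\cX$.

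Next I would identify the quotient norm. By definition the quotient norm of $2\|\cdot\|_T$ on $\cX/\R\unit$ sends the class of $x$ to $\inf_{\lambda\in\R}2\|x+\lambda\unit\|_T$, which by Lemma~\ref{l-quonorm} equals $\|x\|_H$. Thus the Hopf oscillation seminorm, viewed on $\cX/\R\unit$, is exactly the quotient norm of the Banach space $(\cX,2\|\cdot\|_T)$ by $\R\unit$, and the cited completeness result immediately gives that $(\cX/\R\unit,\|\cdot\|_H)$ is itself a Banach space.

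It remains to describe the dual. The standard result identifies the dual of $\cX/\R\unit$ isometrically with the annihilator of $\R\unit$ in $\cX^\star$, equipped with the dual norm attached to $(\cX,2\|\cdot\|_T)$. The annihilator is $\{\mu\in\cX^\star:\<\mu,\lambda\unit>=0,\ \forall \lambda\in\R\}=\{\mu\in\cX^\star:\<\mu,\unit>=0\}=\cM(\unit)$, as claimed. The only step demanding attention --- and the sole place where the constant $2$ plays a genuine role --- is the rescaling of the dual norm: since $(c\|\cdot\|)^\star=c^{-1}\|\cdot\|^\star$ for every $c>0$, the dual norm of $2\|\cdot\|_T$ is $\tfrac12\|\cdot\|_T^\star$, whose restriction to $\cM(\unit)$ is precisely the quantity $\|\mu\|_H^\star:=\tfrac12\|\mu\|_T^\star$ appearing in the statement. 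This yields the desired isometric identification. There is no serious obstacle here; the argument is essentially bookkeeping around the factor of $2$, with the one-dimensionality of $\R\unit$ guaranteeing closedness for free.
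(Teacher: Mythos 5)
Your proof is correct and follows essentially the same route as the paper: both apply the standard quotient-space duality result from functional analysis to the Banach space $(\cX,2\othernorm{\cdot}_T)$ and the closed subspace $\R\unit$, using Lemma~\ref{l-quonorm} to recognize $\othernorm{\cdot}_H$ as the resulting quotient norm and the annihilator of $\R\unit$ as $\cM(\unit)$. Your explicit bookkeeping of the factor $2$ via $(c\othernorm{\cdot})^\star=c^{-1}\othernorm{\cdot}^\star$ just makes precise what the paper leaves implicit.
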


\if
The Hopf's oscillation seminorm defines a norm in the quotient space $\cX/\R\unit$. 
It is easily seen from the definitions that
\begin{align}\label{a-nHT}
\othernorm{x}_H\leq 2\othernorm{x}_T.
\end{align}
Thus we know that the dual space of $(\cX/\R\unit, \othernorm{\cdot}_H)$ is a subset of $\cX^\star$.  It is immediate that the dual space should be contained in the hyperplane:
\[
\cM = \{\mu\in \cX^\star\mid \<\mu,\unit> =0\}.
\]
The next lemma shows that the dual space is exactly the hyperplane.
\begin{lemma}\label{l-norm-rel}
The dual space of $(\cX/\R\unit, \othernorm{\cdot}_H)$ is $(\cM, \othernorm{\cdot}_H^\star)$ where 
\begin{align}\label{a-hs-ts}
\othernorm{\mu}_H^\star:=\frac{1}{2} \othernorm{\mu}_T^\star, \enspace \forall \mu \in \cM.
\end{align}
\end{lemma}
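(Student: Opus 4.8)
The plan is to recognize $\othernorm{\cdot}_H$ as an honest quotient norm and then to invoke the standard duality between quotient spaces and subspace annihilators. By Lemma~\ref{l-quonorm} we have $\othernorm{x}_H = \inf_{\lambda\in\R} 2\othernorm{x+\lambda \unit}_T$, which is exactly the quotient norm on $\cX/\R\unit$ induced by the \emph{rescaled} Thompson norm $2\othernorm{\cdot}_T$. Since $\unit\in\C^0$ is nonzero, $\R\unit$ is a one-dimensional, hence closed, subspace of $\cX$; and since the cone is normal, $\othernorm{\cdot}_T$ is equivalent to the ambient norm, so $(\cX,2\othernorm{\cdot}_T)$ is a Banach space. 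Thus $\othernorm{\cdot}_H$ is precisely the quotient norm of a genuine norm on a Banach space modulo a closed subspace.

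First I would apply the cited functional-analysis result (as in~\cite[P.88]{Conway90FA}) to the Banach space $(\cX, 2\othernorm{\cdot}_T)$ and the closed subspace $\W=\R\unit$. This immediately yields that $(\cX/\R\unit,\othernorm{\cdot}_H)$ is complete, establishing the first assertion. The same result identifies the dual of this quotient space, isometrically, with the annihilator
$$
\W^{\perp}=\{\mu\in\cX^\star:\,\<\mu,w>=0,\ \forall w\in\W\}=\{\mu\in\cX^\star:\,\<\mu,\unit>=0\}=\cM(\unit),
$$
equipped with the dual norm of the norm $2\othernorm{\cdot}_T$ on $\cX$.

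It then remains only to compute that dual norm. For any constant $c>0$ the dual of $c\othernorm{\cdot}$ is $c^{-1}\othernorm{\cdot}^\star$, since $\sup_{c\othernorm{x}\leq 1}\<\mu,x>=c^{-1}\sup_{\othernorm{x}\leq 1}\<\mu,x>$. Taking $c=2$ and restricting to $\mu\in\cM(\unit)$ gives $\othernorm{\mu}_H^\star=\tfrac12\othernorm{\mu}_T^\star$, which is exactly~\eqref{a-hs-ts}. The only point requiring care is the bookkeeping of this scaling factor: the factor $2$ built into Hopf's oscillation seminorm by Lemma~\ref{l-quonorm} is precisely what produces the factor $\tfrac12$ relating the two dual norms. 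I do not expect any genuine obstacle beyond correctly matching $\othernorm{\cdot}_H$ with the quotient construction and tracking this constant through the duality.
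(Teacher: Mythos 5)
Your proposal is correct and follows exactly the paper's own route: Lemma~\ref{l-quonorm} identifies $\othernorm{\cdot}_H$ as the quotient norm of $2\othernorm{\cdot}_T$ modulo the closed subspace $\R\unit$, and the standard quotient--annihilator duality then gives both completeness and the isometric identification of the dual with $\cM(\unit)$ carrying half the Thompson dual norm. The paper states this more tersely (it leaves the rescaling computation implicit), but the argument is the same.
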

\begin{proof}
Let any $x\in \cX/\R \unit$ and 
$$
\lambda=-\frac{M(x/\unit)+m(x/\unit)}{2},
$$
then 
$$
\|x+\lambda \unit\|_T=\frac{1}{2}\othernorm{x}_H.
$$
Let any $\mu \in \cM$, then
\[
\<\mu,x> =\<\mu,x+\lambda \unit>\leq \|\mu\|_T^\star\|x+\lambda \unit\|_T= \frac{1}{2}\|\mu\|_T^\star \|x\|_H.
\]
Hence the hyperplane $\cM$ is the dual space of $(\cX/\R\unit, \othernorm{\cdot}_H)$ and the dual norm $\othernorm{\cdot}_H^\star$ should satisfy:
$$\|\mu\|_H^\star \leq \frac 1 2 \|\mu\|_T^\star.$$
Finally it follows from the relation~\eqref{a-nHT} that:
$$
\|\mu\|_H^\star \geq \frac 1 2 \|\mu\|_T^\star.
$$
\end{proof}
\fi
The above lemma implies that the unit ball  of the space $(\cM(\unit), \othernorm{\cdot}_H^\star)$, denoted by
$B_H^\star(\unit)$, satisfies:
\begin{align}\label{a-BHstar}
B_H^\star(\unit)=2B_T^\star(\unit) \cap \cM(\unit).
\end{align}

\begin{rem}
 In the case of $\cX=\R^n$, $\C=\R^n_+$ and $\unit=\bold 1$ (Example~\ref{rem1} and Remark~\ref{rem-rn1}),
Lemma~\ref{l-norm-rel} implies that for any two probability measures $\mu,\nu\in \pP(\bold 1)$, the dual norm $\othernorm{\mu-\nu}_H^\star$ is the total variation distance between $\mu$ and $\nu$:
$$
\othernorm{\mu-\nu}_H^\star=\frac{1}{2}\othernorm{\mu-\nu}_1=\othernorm{\mu-\nu}_{TV}
$$
\end{rem}

Before giving a representation of the extreme points of $B_H^\star(\unit)$,
 we define the disjointness relation $\bot$ on $\pP(\unit)$.
\begin{defi}
 For all $\nu,\pi \in \pP(\unit)$, we say that $\nu$ and $\pi$ are {\em disjoint}, denoted by $\nu \perp \pi$, if 
$$
\mu =\frac{\nu+\pi}{2}
$$
for all $\mu \in \pP(\unit)$ such that $\mu \geq \frac{\nu}{2}$ and $\mu\geq \frac{\pi}{2}$. 
\end{defi}
In particular, we remark the following property:
\begin{lemma}\label{l-nuperppi}
 Let $\nu,\pi\in \pP(\unit)$. The following assertions are equivalent:
\begin{itemize}
\item [(a)]$\nu\perp\pi$.
\item[(b)] The only elements $\rho,\sigma\in \pP(\unit)$ such that
$$
\nu-\pi=\rho-\sigma
$$
are $\rho=\nu$ and $\sigma=\pi$.
\end{itemize}
\end{lemma}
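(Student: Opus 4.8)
The plan is to prove the two implications by an affine change of variables that sets up a bijection between the ``witnesses'' appearing in (a) and those appearing in (b). A witness for the failure of the degenerate conclusion in (a) is a point $\mu\in\pP(\unit)$ with $\mu\geq\frac{\nu}{2}$ and $\mu\geq\frac{\pi}{2}$ but $\mu\neq\frac{\nu+\pi}{2}$, while a witness for the failure in (b) is a pair $\rho,\sigma\in\pP(\unit)$ with $\nu-\pi=\rho-\sigma$ but $(\rho,\sigma)\neq(\nu,\pi)$. I would exhibit explicit maps between these two families, check that they are mutually inverse, and observe that they carry the degenerate datum to the degenerate datum; this yields both implications at once.

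For (b)$\Rightarrow$(a): given $\mu\in\pP(\unit)$ with $\mu\geq\frac{\nu}{2}$ and $\mu\geq\frac{\pi}{2}$, I would set $\rho:=2\mu-\pi$ and $\sigma:=2\mu-\nu$. Then $\rho-\sigma=\nu-\pi$ by construction, and pairing with $\unit$ gives $\<\rho,\unit>=\<\sigma,\unit>=2-1=1$. The positivity $\rho=2(\mu-\frac{\pi}{2})\in\C^\star$ and $\sigma=2(\mu-\frac{\nu}{2})\in\C^\star$ follows precisely from the two order inequalities, so $\rho,\sigma\in\pP(\unit)$. Assuming (b) then forces $\rho=\nu$, hence $2\mu-\pi=\nu$, i.e.\ $\mu=\frac{\nu+\pi}{2}$, which is the conclusion of (a).

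For (a)$\Rightarrow$(b): given $\rho,\sigma\in\pP(\unit)$ with $\nu-\pi=\rho-\sigma$, I would set $\mu:=\frac{\rho+\pi}{2}$, observing that $\rho-\sigma=\nu-\pi$ gives $\rho+\pi=\sigma+\nu$, so equivalently $\mu=\frac{\sigma+\nu}{2}$. Then $\<\mu,\unit>=1$ and $\mu\in\C^\star$ by convexity of the dual cone; moreover $\mu-\frac{\pi}{2}=\frac{\rho}{2}\geq0$ and $\mu-\frac{\nu}{2}=\frac{\sigma}{2}\geq0$, so $\mu$ is admissible in (a). Assuming (a) forces $\mu=\frac{\nu+\pi}{2}$, i.e.\ $\frac{\rho+\pi}{2}=\frac{\nu+\pi}{2}$, whence $\rho=\nu$ and then $\sigma=\rho-(\nu-\pi)=\pi$, which is (b).

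I expect no serious obstacle: the argument is a single linear substitution, and the two maps $\mu\mapsto(2\mu-\pi,\,2\mu-\nu)$ and $(\rho,\sigma)\mapsto\frac{\rho+\pi}{2}$ are visibly inverse to each other. The only points requiring care are bookkeeping: checking that the normalization $\<\cdot,\unit>=1$ is preserved under each substitution, and checking membership in $\C^\star$, both of which reduce to the defining order inequalities together with the fact that $\C^\star$ is a convex cone. Once the bijection is in place, the lemma is really just the statement that it carries the unique degenerate witness to the unique degenerate witness, so the two implications are one computation read in opposite directions.
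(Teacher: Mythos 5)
Your proof is correct and follows essentially the same route as the paper's: for (a)$\Rightarrow$(b) you form $\mu=\frac{\rho+\pi}{2}=\frac{\sigma+\nu}{2}$ and invoke disjointness, and for (b)$\Rightarrow$(a) you write $\nu-\pi=(2\mu-\pi)-(2\mu-\nu)$ and invoke uniqueness of the decomposition. Your version is slightly more explicit in verifying that the substituted elements actually lie in $\pP(\unit)$ (normalization and positivity), which the paper leaves implicit, but the argument is the same.
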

\begin{proof}
(a)$\Rightarrow$ (b): Let any $\rho,\sigma\in \pP(\unit)$ such that
$$
\nu-\pi=\rho-\sigma.
$$
Then it is immediate that 
$$
\nu+\sigma=\pi+\rho. 
$$
Let $\mu=\frac{\nu+\sigma}{2}=\frac{\pi+\rho}{2}$. Then $\mu\in\pP(\unit)$, $\mu\geq \frac{\nu}{2}$ and $\mu\geq \frac{\pi}{2}$. Since we assumed $\nu \perp\pi$, we obtain that $\mu=\frac{\nu+\pi}{2}$. It follows that $\rho=\nu$ and $\sigma=\pi$.

(b)$\Rightarrow$ (a): Let any $\mu\in \pP(\unit)$ such that $\mu\geq \frac{\nu}{2}$ and $\mu\geq \frac{\pi}{2}$. Then 
$$
\nu-\pi=(2\mu-\pi)-(2\mu-\nu).
$$
From (b) we know that $2\mu-\pi=\nu$.
\end{proof}

We denote by $\extr(\cdot)$ the set of extreme points of a convex set.
\begin{theo}\label{p-carac-ext}
The set of extreme points of $B_H^\star(\unit)$, denoted by $\extr{B_H^\star(\unit)}$, is characterized by:
$$
\extr{B_H^\star(\unit)}=\{\nu-\pi:\nu,\pi\in \extr{\pP(\unit)},\nu \perp \pi\}.
$$ 
\end{theo}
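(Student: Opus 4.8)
The plan is to realize $B_H^\star(\unit)$ as the image of the product simplex $\pP(\unit)\times\pP(\unit)$ under the affine difference map, and then to extract the extreme points from the face structure of a product. First I would combine Lemma~\ref{l-BTstar} with the identity~\eqref{a-BHstar}. Every element of $B_T^\star(\unit)=\operatorname{conv}(\pP(\unit)\cup-\pP(\unit))$ has the form $s\nu-t\pi$ with $\nu,\pi\in\pP(\unit)$, $s,t\geq 0$, $s+t=1$; imposing the constraint $\<\,\cdot\,,\unit>=0$ that defines $\cM(\unit)$ forces $s=t=\tfrac12$. Hence~\eqref{a-BHstar} yields
\[
B_H^\star(\unit)=\{\nu-\pi:\ \nu,\pi\in\pP(\unit)\}=\Phi(\pP(\unit)\times\pP(\unit)),
\]
where $\Phi(\nu,\pi):=\nu-\pi$ is an affine surjection from $\pP(\unit)\times\pP(\unit)$ onto $B_H^\star(\unit)$.

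For sufficiency, let $\nu,\pi\in\extr\pP(\unit)$ with $\nu\perp\pi$ and $z=\nu-\pi$. If $z=\tfrac12(a+b)$ with $a=\rho_1-\sigma_1$, $b=\rho_2-\sigma_2$ and $\rho_i,\sigma_i\in\pP(\unit)$, I would set $\rho=\tfrac12(\rho_1+\rho_2)$, $\sigma=\tfrac12(\sigma_1+\sigma_2)\in\pP(\unit)$; then $\nu-\pi=\rho-\sigma$, and Lemma~\ref{l-nuperppi} forces $\rho=\nu$, $\sigma=\pi$. Extremality of $\nu$ and $\pi$ in $\pP(\unit)$ then gives $\rho_1=\rho_2=\nu$ and $\sigma_1=\sigma_2=\pi$, so $a=b=z$ and $z$ is extreme.

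For necessity, let $z\in\extr B_H^\star(\unit)$. The key observation is that the (nonempty) preimage $\Phi^{-1}(z)$ is a face of $\pP(\unit)\times\pP(\unit)$: if a point of $\Phi^{-1}(z)$ is the midpoint of two points of the product, applying $\Phi$ and using that $z$ is extreme shows both endpoints already map to $z$. Next I would invoke the elementary fact that a face $F$ of a product of convex sets is a product $F_1\times F_2$ of faces — given $(\nu_1,\pi_1),(\nu_2,\pi_2)\in F$, their common midpoint equals the midpoint of the mixed pairs $(\nu_1,\pi_2)$ and $(\nu_2,\pi_1)$, so the face property pulls these mixed pairs into $F$, making $F$ rectangular with face projections. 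Applying this to $F=\Phi^{-1}(z)=F_1\times F_2$: since $\nu-\pi=z$ is constant on $F_1\times F_2$, neither factor can contain two distinct points, whence $F_1=\{\nu\}$ and $F_2=\{\pi\}$ are singletons, i.e.\ $\nu,\pi\in\extr\pP(\unit)$; and the uniqueness of the representation $z=\nu-\pi$ is precisely the disjointness $\nu\perp\pi$ by Lemma~\ref{l-nuperppi}.

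The computations of the first paragraph and the sufficiency argument are routine; the crux is the necessity direction, resting on the two structural facts that (i) the preimage of an extreme point under an affine surjection is a face and (ii) faces of a product are products of faces. Neither needs any topology, so no compactness or closedness hypotheses enter, and the only substantive input is the dictionary between ``unique decomposition $z=\nu-\pi$'' and the disjointness relation supplied by Lemma~\ref{l-nuperppi}. As a sanity check, this viewpoint also explains why $0\notin\extr B_H^\star(\unit)$ whenever $\pP(\unit)$ has more than one point, since its preimage, the diagonal, is not a face.
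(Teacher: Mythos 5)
Your argument is correct. The first step (showing $B_H^\star(\unit)=\{\nu-\pi:\nu,\pi\in\pP(\unit)\}$ from Lemma~\ref{l-BTstar} and~\eqref{a-BHstar}) and the sufficiency direction coincide with the paper's proof, which likewise feeds the recombined decomposition $\nu-\pi=\frac{\nu_1+\nu_2}{2}-\frac{\pi_1+\pi_2}{2}$ into Lemma~\ref{l-nuperppi} and then uses extremality of $\nu$ and $\pi$. Where you genuinely diverge is the necessity direction. The paper argues directly: if $\nu=\frac{\nu_1+\nu_2}{2}$ with $\nu_1\neq\nu_2$ it exhibits the decomposition $\nu-\pi=\frac{\nu_1-\pi}{2}+\frac{\nu_2-\pi}{2}$ to contradict extremality, and then separately shows uniqueness of the representation $\nu-\pi=\rho-\sigma$ to invoke Lemma~\ref{l-nuperppi}. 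You instead package everything into two standard structural facts --- the preimage of an extreme point under the affine surjection $\Phi(\nu,\pi)=\nu-\pi$ is a face of $\pP(\unit)\times\pP(\unit)$, and a face of a product is a product of faces --- so that $\Phi^{-1}(z)$ collapses to a singleton $\{(\nu,\pi)\}$, delivering extremality of $\nu$, extremality of $\pi$, and (via Lemma~\ref{l-nuperppi}) disjointness in one stroke. The underlying algebra is the same (your mixed-midpoint trick in the product-face lemma is exactly the paper's $\frac{\nu_1-\pi}{2}+\frac{\nu_2-\pi}{2}$ manipulation in disguise), but your version is more conceptual and makes transparent why no topology is needed; the paper's version is more self-contained, avoiding the need to state and justify the product-face lemma. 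Do make sure, if you write this up, to justify that the projections $F_1,F_2$ are themselves faces (not merely that $F$ is rectangular), since a singleton subset that is not a face would not yield an extreme point; the one-line argument fixing the second coordinate does this.
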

\begin{proof}
It follows from~\eqref{e-bstar} that every
point $\mu\in B_T^\star(\unit)$ can be written
as $\mu= s\nu - t\pi$ with $ s+t=1,s,t\geq 0$,
$\nu,\pi \in \cP$. Moreover, if $\mu\in \cM(\unit)$, 
$0=\<\mu,\unit>=s\<\nu,\unit>-t\<\pi,\unit>
= s-t$, and so $s=t=\frac{1}{2}$. 
Thus every $\mu \in B_T^\star(\unit) \cap \cM(\unit)$ can be written as
\[
\mu = \frac{\nu-\pi}{2}, \enspace \nu,\pi \in \pP(\unit).
\]
Therefore by~\eqref{a-BHstar} we proved that
\begin{align}\label{a-BHstarunit}
B_H^\star(\unit)=\{\nu-\pi:\nu,\pi\in \pP(\unit)\}.
\end{align}
Now let $\nu,\pi \in \extr \pP(\unit)$ and $\nu\perp \pi$. We are going to prove that $\nu-\pi\in \extr B_H^\star(\unit)$. Let $\nu_1,\pi_1,\nu_2,\pi_2\in \pP(\unit)$ such that
$$
\nu-\pi=\frac{\nu_1-\pi_1}{2}+\frac{\nu_2-\pi_2}{2}.
$$
Then 
$$
\nu-\pi=\frac{\nu_1+\nu_2}{2}-\frac{\pi_1+\pi_2}{2}.
$$
By Lemma~\ref{l-nuperppi}, the only possibility is $2\nu={\nu_1+\nu_2}$ and $2\pi=\pi_1+\pi_2$.
Since $\nu,\pi\in\extr \pP(\unit)$ we obtain that $\nu_1=\nu_2=\nu$ and $\pi_1=\pi_2=\pi$. Therefore $\nu-\pi\in \extr{B_H^\star(\unit)}$.

Now let $ \nu,\pi\in \pP(\unit)$ such that $\nu-\pi \in \extr{B_H^\star(\unit)}$. 
Assume by contradiction that $\nu$ is not extreme in $\pP(\unit)$ (the case in which
$\pi$ is not extreme can be dealt with similarly). Then, we can
find $\nu_1,\nu_2\in \pP(\unit)$, $\nu_1\neq \nu_2$, such that
$\nu = \frac{\nu_1+ \nu_2}{2}$. It follows that $$\mu = \frac{\nu_1-\pi}{2}
+\frac{\nu_2-\pi}{2},$$ where ${\nu_1-\pi},\nu_2-\pi$ are distinct
elements of $ B_H^\star(\unit)$, which is a contradiction. Next we show that $\nu\perp \pi$. To this end,
let any $\rho,\sigma\in \pP(\unit)$ such that 
$$
\nu-\pi=\rho-\sigma.
$$
Then 
$$
\nu-\pi=\frac{\nu-\pi+\rho-\sigma}{2}=\frac{\nu-\sigma}{2}+\frac{\rho-\pi}{2}.
$$
If $\sigma\neq \pi$, then $\nu-\sigma\neq \nu-\pi$ and this contradicts the fact that $\nu-\pi$ is extremal. Therefore $\sigma=\pi$ and $\rho=\nu$.
From Lemma~\ref{l-nuperppi}, we deduce that $\nu\perp\pi$.


\end{proof}

\begin{rem}
 When $\cX=\R^n$, $\C=\R^n_+$ and $\unit=\bold 1$ (Example~\ref{rem1} and Remark~\ref{rem-rn1}), the set of extreme points of $\pP(\bold 1)$ is the set of 
standard basis vectors $\{e_i\}_{i=1,\dots,n}$. The extreme points are pairwise disjoint.
\end{rem}
\begin{rem}\label{rem-xx*yy*}
When $\cX=\sym_n$, $\C=\sym_n^+$ and $\unit=I_n$ (Example~\ref{ex-Symn} and Remark~\ref{rem-sym1}), the set of extreme points of $\pP(I_n)$ is:
$$
\extr \pP(I_n)=\{xx^*: x\in \CC^n, x^*x=1\}.
$$
Two extreme points $xx^*$ and $yy^*$ are disjoint if and only if $x^*y=0$. To see this, note that if $x^*y=0$ then any Hermitian matrix $X$ such that $X\geq xx^*$ and $X\geq yy^*$ should satisfy
$X\geq xx^ *+yy^ *$. Hence by definition $xx^*$ and $yy^ *$ are disjoint. Inversely, suppose that $xx^ *$ and $yy^ *$ are disjoint and consider the spectral decomposition of the matrix $xx^*-yy^*$, i.e., there is $\lambda\leq 1$ and two orthonormal vectors $u,v$ such that $xx^*-yy^*=\lambda(uu^ *-vv^ *)$. It follows that $xx^*-yy^*=uu^ *-((1-\lambda)uu^ *+\lambda vv^ *)$.
By Lemma~\ref{l-nuperppi}, the only possibility is $yy^ *=(1-\lambda)uu^ *+\lambda vv^ *$ and $xx^ *=uu^*$ thus $\lambda=1$, $u=x$ and $v=y$. Therefore $x^ *y=0$.
\end{rem}

\section{The operator norm induced by Hopf's oscillation}\label{sec-operatornormch}
Consider two real Banach spaces $\cX_1$ and $\cX_2$. Let $\C_1\subset \cX_1$ and $\C_2\subset \cX_2$ be respectively 
two closed pointed convex normal cones with non empty interiors $\C_1^ 0$ and $\C_2^0$.
Let $\unit_1\in \C_1^ 0$ and $\unit_2\in \C_2^ 0$. 
Then, we know from Section~\ref{sec-pre} that the two quotient spaces $(\cX_1/\R \unit_1,\othernorm{\cdot}_H)$ and $(\cX_2/\R \unit_2,\othernorm{\cdot}_H)$ equipped with the Hopf's oscillation seminorms associated respectively to $\unit_1$
and $\unit_2$ are Banach spaces. The dual spaces of $(\cX_1/\R \unit_1,\othernorm{\cdot}_H)$ and $(\cX_2/\R \unit_2,\othernorm{\cdot}_H)$ are respectively the spaces $(\cM(\unit_1),\othernorm{\cdot}_H^ \star)$ and $(\cM(\unit_2),\othernorm{\cdot}_H^ \star)$ (see Lemma~\ref{l-norm-rel}).

Let $T$ denote a continuous
linear map from $(\cX_1/\R \unit_1,\othernorm{\cdot}_H)$ to $(\cX_2/\R \unit_2,\othernorm{\cdot}_H)$.
The operator norm of $T$, denoted by $\othernorm{T}_H$, is given by:
\[
\|T\|_H:= \sup_{x\in B_H(\unit_1)} \|T(x)\|_H 
\]
 The \firstdef{adjoint operator} $T^\star: (\cM(\unit_2),\othernorm{\cdot}_H^ \star) \rightarrow (\cM(\unit_1),\othernorm{\cdot}_H^ \star) $ of $T$ is by definition:
$$
\<T^\star(\mu),x>=\<\mu,T(x)>,\enspace \forall \mu \in \cM(\unit_2),x\in \cX_1.
$$
The operator norm  of $T^\star$, denoted by $\othernorm{T^ \star}_H^ \star$, is then:
\[ \|T^\star\|_H^\star:= \sup_{\mu \in B_H^\star(\unit_2)} \|T^\star(\mu)\|^\star_H.
\]

A classical duality result (see \cite[\S~6.8]{aliprantis}) shows that an operator and its adjoint have the same operator norm. In particular, 
$$
\|T\|_H = \|T^\star\|_H^\star.
$$
\begin{theo}\label{th-opnorm}
Let $T:\cX_1 \to \cX_2$ 
be a bounded linear map such that $T(\unit_1)\in \mathbb{R} \unit_2$. Then,
\[
\|T\|_H  = \frac 1 2 \sup_{\substack{\nu,\pi \in  \pP(\unit_2)}} 
\|T^\star(\nu)-T^\star(\pi)\|_T^\star=\sup_{\substack{\nu,\pi \in \pP(\unit_2)}} \sup_{ x\in[0, \unit_1]}\<\nu-\pi,T(x)>.
\]
Moreover, the supremum 
can be restricted to the set 
of extreme points:
\begin{align}\label{e-carac-TH}
\|T\|_H  = \frac 1 2 \sup_{\substack{\nu,\pi \in \operatorname{extr} \cP(\unit_2)\\ \nu\perp\pi}} 
\|T^\star(\nu)-T^\star(\pi)\|_T^\star=\sup_{\substack{\nu,\pi \in \operatorname{extr}\cP(\unit_2)\\ \nu\perp\pi}} \sup_{ x\in[0, \unit_1]}\<\nu-\pi, T(x)>.
\end{align}
\end{theo}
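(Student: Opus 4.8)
The plan is to derive the whole statement from the duality identity $\|T\|_H=\|T^\star\|_H^\star$ recalled just above, fed with the description of the dual unit ball and of its extreme points obtained in Section~\ref{sec-pre}. The first thing I would check is that the hypothesis $T(\unit_1)\in\R\unit_2$ makes the adjoint $T^\star$ a genuine operator between the relevant dual spaces: writing $T(\unit_1)=c\unit_2$, for any $\mu\in\cM(\unit_2)$ one has $\<T^\star\mu,\unit_1>=\<\mu,T(\unit_1)>=c\<\mu,\unit_2>=0$, so $T^\star\mu\in\cM(\unit_1)$ and the norm $\|T^\star\|_H^\star$ is well defined. Note that the adjoint acting between the quotient duals is simply the restriction of the ordinary adjoint $\cX_2^\star\to\cX_1^\star$, so that for $\nu,\pi\in\pP(\unit_2)$ the symbol $T^\star(\nu)-T^\star(\pi)$ equals $T^\star(\nu-\pi)$, an element of $\cM(\unit_1)$ since $\<\nu-\pi,\unit_2>=0$.

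Next I would unwind $\|T^\star\|_H^\star=\sup_{\mu\in B_H^\star(\unit_2)}\|T^\star\mu\|_H^\star$ using the description~\eqref{a-BHstarunit}, which writes every element of $B_H^\star(\unit_2)$ as $\nu-\pi$ with $\nu,\pi\in\pP(\unit_2)$. Because $T^\star(\nu-\pi)\in\cM(\unit_1)$, Lemma~\ref{l-norm-rel} converts $\|T^\star(\nu-\pi)\|_H^\star$ into $\tfrac12\|T^\star(\nu)-T^\star(\pi)\|_T^\star$, which yields the first equality. For the second, I would evaluate the dual Thompson norm of a functional $\mu\in\cM(\unit_1)$ straight from the definitions: $\|z\|_T\leq1$ is equivalent to $-\unit_1\leq z\leq\unit_1$, and the substitution $z=2x-\unit_1$ with $x\in[0,\unit_1]$ gives $\|\mu\|_T^\star=\sup_{x\in[0,\unit_1]}\<\mu,2x-\unit_1>=2\sup_{x\in[0,\unit_1]}\<\mu,x>$, the last step using $\<\mu,\unit_1>=0$. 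Taking $\mu=T^\star(\nu-\pi)$ and transposing $T$ across the pairing produces $\sup_{x\in[0,\unit_1]}\<\nu-\pi,T(x)>$, giving the second expression and completing the two equalities over all of $\pP(\unit_2)$.

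For the restriction to extreme points I would argue by convexity. The map $\mu\mapsto\|T^\star\mu\|_H^\star$ is convex, being a norm composed with a linear map; moreover, by the formula just established it equals $\sup_{x\in[0,\unit_1]}\<\mu,T(x)>$, a supremum of weak-star continuous linear functionals, hence weak-star lower semicontinuous. The set $B_H^\star(\unit_2)=2B_T^\star(\unit_2)\cap\cM(\unit_2)$ of~\eqref{a-BHstar} is weak-star compact and convex (Banach--Alaoglu applied to the Thompson unit ball, which is legitimate since $\|\cdot\|_T$ is equivalent to the ambient norm by normality, intersected with the weak-star closed hyperplane $\cM(\unit_2)$), so Krein--Milman applies. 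Given $\mu\in B_H^\star(\unit_2)$, I would write it as a weak-star limit of a net of convex combinations of extreme points; convexity bounds the functional at each combination by $S:=\sup_{\mu\in\extr B_H^\star(\unit_2)}\|T^\star\mu\|_H^\star$, and lower semicontinuity then forces $\|T^\star\mu\|_H^\star\leq S$. Hence the supremum over $B_H^\star(\unit_2)$ equals the supremum over its extreme points, which by Theorem~\ref{p-carac-ext} are precisely the differences $\nu-\pi$ with $\nu,\pi\in\extr\pP(\unit_2)$ and $\nu\perp\pi$; substituting these into the two forms already derived delivers~\eqref{e-carac-TH}. I expect this last paragraph to contain the only real subtlety: one must fix the weak-star topology so that compactness and semicontinuity hold simultaneously, and observe that it is \emph{lower} (not upper) semicontinuity that lets the extreme-point bound survive the passage to the limit — an upper-semicontinuity/Bauer style argument is unnecessary since attainment of the supremum is not claimed, only the equality of the two suprema. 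The first two equalities, by contrast, are a routine assembly of the duality lemmas of Section~\ref{sec-pre}.
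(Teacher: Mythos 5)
Your proposal is correct and follows essentially the same route as the paper: the duality $\|T\|_H=\|T^\star\|_H^\star$, the description~\eqref{a-BHstarunit} of $B_H^\star(\unit_2)$ together with Lemma~\ref{l-norm-rel} for the first equality, the computation $\|\mu\|_T^\star=2\sup_{x\in[0,\unit_1]}\<\mu,x>$ for $\mu\in\cM(\unit_1)$ for the second, and the Banach--Alaoglu/Krein--Milman argument combined with weak-star lower semicontinuity of $\mu\mapsto\|T^\star(\mu)\|_H^\star$ and Theorem~\ref{p-carac-ext} for the restriction to disjoint extreme points. Your preliminary check that $T^\star$ maps $\cM(\unit_2)$ into $\cM(\unit_1)$ is a small addition the paper leaves implicit, but otherwise the two proofs coincide.
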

\begin{proof}
 We already noted that $\|T\|_H = \|T^\star\|_H^\star$. 
Moreover, 
\[
\|T^\star\|_H^\star
= \sup_{\mu\in B_H^\star(\unit_2)} \|T^\star(\mu)\|_H^\star.
\]
By the characterization of $B_H^\star(\unit_2)$ in~\eqref{a-BHstarunit}
and the characterization of the norm $\|\cdot\|_H^\star$ in Lemma~\ref{l-norm-rel}, we get
$$
\sup_{\mu\in B_H^\star(\unit_2)} \|T^\star(\mu)\|_H^\star
=\sup_{\nu,\pi\in \pP(\unit_2)} \|T^\star(\nu)-T^\star(\pi)\|_H^\star=\frac 1 2 \sup_{\nu,\pi\in \pP(\unit_2)} \|T^\star(\nu)-T^\star(\pi)\|_T^\star
$$
For the second equality, note that
$$
\begin{array}{ll}
\othernorm{T^\star(\nu)-T^\star(\pi)}_T^\star
&=\displaystyle\sup_{x\in[0,\unit_1]} \<T^\star(\nu)-T^\star(\pi),2x-\unit_1>\\
&=2\displaystyle\sup_{ x\in[0,\unit_1]} \<T^\star(\nu)-T^\star(\pi),x>
\end{array}.
$$
We next show that the supremum can be restricted to the set of extreme points. By the Banach-Alaoglu theorem, $B_H^\star$ is weak-star compact,
and it is obviously convex. The dual space $\cM$ endowed with the weak-star topology is a locally convex topological space. Thus by the Krein-Milman theorem, the unit ball $B_H^\star$, which is a compact convex set in 
$\cM$ with respect to the weak-star topology, is the closed convex hull of its extreme points. So every element $\rho$ of $B_H^\star(\unit_2)$
is the limit of a net  $(\rho_\alpha)_\alpha$ of elements of $\operatorname{conv}\operatorname{extr} B_H^\star(\unit_2)$, 
Observe now that the function
\[ \varphi: \mu \mapsto \|T^\star(\mu)\|_H^\star = \sup_{x\in B_H(\unit_1)} \<T^\star(\mu),x>
=\sup_{x\in B_H(\unit_1)} \<\mu, T(x)>
\]
which is a sup of weak-star continuous maps is convex and  weak-star lower
semi-continuous. This implies that $\varphi(\rho)
\leq  \liminf_\alpha \varphi(\rho_\alpha) \leq \sup_{\operatorname{conv}\operatorname{extr} B_H^\star(\unit_2)}\varphi(\mu)  = \sup_{\operatorname{extr} B_H^\star(\unit_2)}\varphi(\mu)$. 
Using the characterization of the extreme points in Proposition~\ref{p-carac-ext}, we get:
\begin{align*}
\sup_{\mu\in B_H^\star(\unit_2)} \|T^\star(\mu)\|_H^\star &= 
\sup_{\mu\in\operatorname{extr} B_H^\star(\unit_2)} \|T^\star(\mu)\|_H^\star 
=\sup_{\substack{\nu,\pi \in \operatorname{extr}\cP(\unit_2)\\ \nu\perp\pi}} \|T^\star(\nu)-T^\star(\pi)\|_H^ \star.\qedhere
\end{align*}
\end{proof}

\begin{rem}\label{rk-reached}
When $\cX_1$ is of finite dimension, the set $[0,\unit_1]$ is the convex hull of the set of its extreme points, hence, the supremum over the variable $x\in[0,\unit_1]$ in~\eqref{e-carac-TH} is attained at an extreme point. Similarly,
if $\cX_2$ is of finite dimension, the suprema over $(\nu,\pi)$ in the same
equation are also attained, because the map $\varphi$ in the proof of the previous theorem, which is a supremum of an equi-Lipschitz family of maps,
is continuous (in fact, Lipschitz).
\end{rem}
\begin{rem}\label{rem-comReeb}
 Theorem~\ref{th-opnorm} should be compared with Proposition~12 of~\cite{ReebWolf2011} which can be stated as follows.
\begin{prop}[Proposition 12 in~\cite{ReebWolf2011}]\label{p-Reeb12}
Let $L:\mathcal{V}\rightarrow \mathcal{V'}$ be a linear map and let $\mathcal{B}\subset \mathcal{V}$ and $\mathcal{B'}\subset \mathcal{V}'$ be bases. Then
\begin{align}\label{a-Reeb12}
\sup_{v_1\neq v_2\in\mathcal{B}} \frac{\othernorm{L(v_1)-L(v_2)}_{\mathcal{B'}}}{\othernorm{v_1-v_2}_{\mathcal{B}}}=\frac{1}{2} 
\sup_{v_1,v_2\in \extr \mathcal{B}}\othernorm{L(v_1)-L(v_2)}_{\mathcal{B}'}
\end{align}
\end{prop}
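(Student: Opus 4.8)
The plan is to recognize~\eqref{a-Reeb12} as the adjoint form of Theorem~\ref{th-opnorm} and then to close the remaining gap by a squeezing argument. Using the dictionary of Remark~\ref{rem-basenormdisnorm}, I identify $\mathcal{V}$ with $\cX_2^\star$, $\mathcal{V'}$ with $\cX_1^\star$, the bases with the simplices $\mathcal{B}=\pP(\unit_2)$ and $\mathcal{B'}=\pP(\unit_1)$, and the base norms with the dual Thompson norms $\othernorm{\cdot}_T^\star$. Writing $T:\cX_1\to\cX_2$ for the pre-adjoint of $L$ (so that $L=T^\star$ under $\cX_i^{\star\star}\simeq\cX_i$), the hypothesis under which the base-norm contraction ratio is the relevant object, namely that $L$ rescales the distinguished functional, reads $T(\unit_1)\in\R\unit_2$; this is exactly the hypothesis of Theorem~\ref{th-opnorm}, and it guarantees that $T^\star=L$ sends $\cM(\unit_2)$ into $\cM(\unit_1)$.

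First I would rewrite the left-hand side of~\eqref{a-Reeb12} as an operator norm. For $v_1,v_2\in\mathcal{B}$ one has $v_1-v_2\in\cM(\unit_2)$ and $L(v_1)-L(v_2)=T^\star(v_1-v_2)\in\cM(\unit_1)$, so Lemma~\ref{l-norm-rel} turns every base norm in the ratio into twice a Hopf dual norm, and the two factors $2$ cancel:
\[
\sup_{v_1\neq v_2\in\mathcal{B}}\frac{\othernorm{L(v_1)-L(v_2)}_{\mathcal{B'}}}{\othernorm{v_1-v_2}_{\mathcal{B}}}
=\sup_{v_1\neq v_2\in\mathcal{B}}\frac{\othernorm{T^\star(v_1-v_2)}_H^\star}{\othernorm{v_1-v_2}_H^\star}.
\]
By~\eqref{a-BHstarunit} every nonzero element of $\cM(\unit_2)$ is a positive multiple of a difference $v_1-v_2$ of simplex elements, and the ratio is scale invariant, so this last supremum is precisely $\othernorm{T^\star}_H^\star=\othernorm{T}_H$. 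Hence the left-hand side of~\eqref{a-Reeb12} equals $\othernorm{T}_H$.

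Then I would invoke Theorem~\ref{th-opnorm}, which gives two expressions for the same number, an unrestricted one over all pairs and a restricted one over disjoint pairs of extreme points:
\[
\othernorm{T}_H=\tfrac12\sup_{\nu,\pi\in\pP(\unit_2)}\othernorm{T^\star(\nu)-T^\star(\pi)}_T^\star
=\tfrac12\sup_{\substack{\nu,\pi\in\extr\pP(\unit_2)\\ \nu\perp\pi}}\othernorm{T^\star(\nu)-T^\star(\pi)}_T^\star.
\]
The right-hand side of~\eqref{a-Reeb12} is $\tfrac12\sup_{\nu,\pi\in\extr\pP(\unit_2)}\othernorm{T^\star(\nu)-T^\star(\pi)}_T^\star$, which is squeezed between these two values: it dominates the disjoint-extreme supremum, since it drops the constraint $\nu\perp\pi$, and it is dominated by the supremum over all pairs, since it runs over a subset of them. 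As both bounds equal $\othernorm{T}_H$, so does the right-hand side, and it matches the left-hand side.

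I expect the genuine subtlety to be the normalization rather than the squeeze. One must verify that $L^\star(\unit_1)\in\R\unit_2$, so that $L(v_1)-L(v_2)$ stays in $\cM(\unit_1)$ and the base norm collapses to a Hopf dual norm, and one must keep the factor $\tfrac12$ consistent between the two formulas of Theorem~\ref{th-opnorm}. If one wishes to prove~\eqref{a-Reeb12} for an arbitrary linear $L$, without this normalization, the duality route breaks down because $\othernorm{T}_H$ is then undefined; the fallback is a direct argument, bounding $\othernorm{L(v_1)-L(v_2)}_{\mathcal{B'}}$ from above by choosing an optimal representation $v_1-v_2=\mu^\star(\nu^\star-\pi^\star)$ with $\mu^\star=\tfrac12\othernorm{v_1-v_2}_{\mathcal{B}}$, coupling $\nu^\star$ and $\pi^\star$ to their extreme-point decompositions, and applying the triangle inequality, while the lower bound follows by testing extreme pairs and using $\othernorm{\alpha-\beta}_{\mathcal{B}}\leq 2$.
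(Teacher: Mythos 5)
You should first note that the paper contains no proof of this statement: Proposition~\ref{p-Reeb12} is quoted verbatim from~\cite{ReebWolf2011} inside Remark~\ref{rem-comReeb}, precisely so that Theorem~\ref{th-opnorm} can be compared with it. So your plan of re-deriving it from Theorem~\ref{th-opnorm} is legitimate in principle and not circular (Theorem~\ref{th-opnorm} rests on Theorem~\ref{p-carac-ext} and Krein--Milman, not on this proposition), but it must be judged on its own.

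The gap is that your main argument only proves a special case. The proposition is asserted for an \emph{arbitrary} linear $L:\mathcal{V}\to\mathcal{V}'$, with no compatibility assumed between $L$ and the functionals cutting out the bases; your duality route needs $L=T^\star$ with $T(\unit_1)\in\R\unit_2$, i.e.\ $L$ must pull the functional defining $\mathcal{B}'$ back to a multiple of the one defining $\mathcal{B}$. Without that, $L$ does not map $\cM(\unit_2)$ into $\cM(\unit_1)$, the quantity $\othernorm{L(v_1)-L(v_2)}_{\mathcal{B}'}$ is the base norm of a vector which is not a difference of base elements, and Lemma~\ref{l-norm-rel} no longer converts it into a Hopf dual norm. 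This hypothesis genuinely fails in general: take $\mathcal{V}=\mathcal{V}'=\R^2$, $\mathcal{K}=\R^2_+$, both bases cut out by $(1,1)$, and $L=\diag(1,2)$; the identity~\eqref{a-Reeb12} still holds (both sides equal $3/2$), but your dictionary does not apply. You do flag this, and your one-sentence ``fallback'' is in fact the correct proof of the stated result, so it should be the main argument, with its two hidden steps made explicit: (i) in finite dimension the infimum defining $\othernorm{v_1-v_2}_{\mathcal{B}}$ is attained, and since $v_1-v_2$ vanishes on the functional defining $\mathcal{B}$, the weights in the representation $v_1-v_2=\lambda(s\nu-t\pi)$ are forced to satisfy $s=t=\tfrac12$, giving $v_1-v_2=\tfrac12\othernorm{v_1-v_2}_{\mathcal{B}}\,(\nu-\pi)$; (ii) writing $\nu=\sum_i a_i\nu_i$ and $\pi=\sum_j b_j\pi_j$ as convex combinations of extreme points, one has $\nu-\pi=\sum_{i,j}a_ib_j(\nu_i-\pi_j)$, and the triangle inequality gives the upper bound, while $\othernorm{\nu_i-\pi_j}_{\mathcal{B}}\leq 2$ gives the lower bound. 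For what it is worth, the restricted case you do handle is exactly the only one the paper uses (the base-preserving map $T^\star$ in~\eqref{e-disjoint}), and there your squeeze of the unconstrained extreme-pair supremum between the all-pairs and the disjoint-extreme-pairs suprema of Theorem~\ref{th-opnorm} is correct.
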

The first term in~\eqref{a-Reeb12} is called the \firstdef{contraction ratio} of the linear map $L$, with respect to base norms. One important applications of this proposition concerns the \firstdef{base preserving} maps $L$ such that $L(\mathcal{B})\subset \mathcal{B'}$.
Let us translate this proposition in the present setting. Consider a linear map $T:\cX/\R\unit_1\rightarrow \cX/\R\unit_2$. Then $T^\star(\pP(\unit_2))\subset \pP(\unit_2)$ is a base preserving linear map and so,
Proposition~12 of~\cite{ReebWolf2011} shows that:
\begin{align}
\sup_{\substack{\nu,\pi\in \pP(\unit_2)\\ \nu\neq \pi}} \frac{\othernorm{T^\star(\nu-\pi)}_T^\star}{\othernorm{\nu-\pi}_{T}^\star}=\frac{1}{2} \sup_{\nu,\pi
\in \extr \pP(\unit_2)} \othernorm{T^\star(\nu)-T^\star(\pi)}_{T}^\star
\label{e-disjoint}
\end{align}
Hence, by comparison with~\cite{ReebWolf2011}, the additional
information here is the equality between 
the contraction ratio in Hopf's oscillation seminorm of a unit preserving linear map, and the contraction ratio with respect to the base norms
of the dual base preserving map. The latter is the primary
object of interest in quantum information theory 
whereas the former
is of interest in the control/consensus literature.
We also proved that the supremum in~\eqref{e-disjoint} can
be restricted to pairs of {\em disjoint} extreme points $\nu,\pi$.
Finally, the expression of the contraction rate as the last supremum 
in Theorem~\ref{th-opnorm} leads here to an abstract version of Dobrushin's ergodic coefficient, see Eqn~\eqref{a-tauA} and Corollary~\ref{coro-noncommDobr} below.
\end{rem}

Recall that {\em Hilbert's projective metric} between two elements $x,y\in \C^ 0$ is defined as:
$$d_H(x,y)=\log(M(x/y)/m(x/y)).$$

Consider a linear operator $T:\cX_1\rightarrow \cX_2$ such that $T(\C_1^0)\subset \C_2^0$.
Following~\cite{birkhoff57,Bushell73}, we define the projective diameter of $T$ as below:
$$
\Diam T=\sup \{d_H(T(x),T(y)): x,y\in \C_1^0\}.
$$
The Birkhoff's contraction formula~\cite{birkhoff57,Bushell73} states that:
\begin{theo}[\cite{birkhoff57,Bushell73}]
$$
\sup_{x,y\in \C_1^0} \frac{\omega(T(x),T(y))}{\omega(x,y)}=
\sup_{x,y\in \C_1^0} \frac{d_H(T(x),T(y))}{d_H(x,y)}= \tanh(\frac{\Diam T}{4}).$$
\end{theo}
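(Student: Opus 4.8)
The plan is to show that all three quantities equal $\tanh(\Diam T/4)$; since this is the classical contraction formula of Birkhoff and Bushell, I reconstruct its standard proof, whose heart is a reduction to a two–dimensional cross–ratio computation. Write $\Delta:=\Diam T$. If $\Delta=\infty$ there is nothing to prove, because $\tanh(\Delta/4)=1$ and $T$ is automatically nonexpansive for $d_H$: as $T$ preserves the order, $mx\le y\le Mx$ implies $m\,Tx\le Ty\le M\,Tx$, whence $M(Ty/Tx)\le M(y/x)$ and $m(Ty/Tx)\ge m(y/x)$. So I assume $\Delta<\infty$, and the real work is the upper bound $d_H(Tx,Ty)\le\tanh(\Delta/4)\,d_H(x,y)$ together with a matching family of near–extremal pairs.

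Fix $x,y\in\C_1^0$ and set $m=m(y/x)$, $M=M(y/x)$, so $d:=d_H(x,y)=\log(M/m)$ and $mx\le y\le Mx$. I would introduce the two boundary vectors $\xi:=Mx-y$ and $\eta:=y-mx$, both in $\C_1$, which satisfy $\xi+\eta=(M-m)x$ and $m\xi+M\eta=(M-m)y$. Applying $T$ and passing to projective classes gives $Tx\sim T\xi+T\eta$ and $Ty\sim T\xi+e^{d}T\eta$, so $Tx$ and $Ty$ lie on the projective geodesic joining the rays of $T\xi,T\eta$, which sits inside the two–dimensional slice $\C_2\cap\Span(T\xi,T\eta)$. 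Because the difference of any two points of that plane stays in the plane, the ambient $d_H$ restricted there coincides with the Hilbert metric of the slice, and the slice—being a pointed $2$–cone—is linearly isomorphic to $\R^2_+$, where $d_H$ is the explicit log–ratio. Writing $T\xi,T\eta$ in these coordinates, setting $D:=d_H(T\xi,T\eta)$ and $u=(T\eta)_1/(T\xi)_1$, a direct computation reduces $d_H(Tx,Ty)$ to $\log f(u)$ with $f(u)=\tfrac{(1+e^{d+D}u)(1+u)}{(1+e^{D}u)(1+e^{d}u)}$; since $f\to1$ at both ends, the interior maximum at $u^{*}=e^{-(d+D)/2}$ yields $d_H(Tx,Ty)\le 2\log\frac{\cosh((D+d)/4)}{\cosh((D-d)/4)}$. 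The map $d\mapsto$(right side) is concave and vanishes at $d=0$ with derivative $\tanh(D/4)$, hence is bounded by $d\,\tanh(D/4)$; as $D=d_H(T\xi,T\eta)\le\Delta$ (using continuity of $d_H$ to pass from interior pairs to the boundary points $\xi,\eta$), this gives the upper bound, and the constant $\tanh(\Delta/4)=\frac{e^{\Delta/2}-1}{e^{\Delta/2}+1}$ emerges from $1+\cosh\theta=2\cosh^2(\theta/2)$.

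For the reverse inequality I would exhibit near–extremal pairs: by definition of $\Delta$, choose boundary points $\xi_0,\eta_0$ of $\C_1$ with $d_H(T\xi_0,T\eta_0)$ arbitrarily close to $\Delta$, then build $x,y\in\C_1^0$ with $d_H(x,y)\to0$ whose associated $\xi,\eta$ approximate $\xi_0,\eta_0$ and whose slice coordinate tends to the optimal $u^{*}\to e^{-D/2}$; along this family $d_H(Tx,Ty)/d_H(x,y)\to\tanh(\Delta/4)$. The equality of the oscillation ratio with the metric ratio follows either by running the identical reduction with $\hilbert{Tx}{Ty}=M(Tx/Ty)-m(Tx/Ty)$ in place of $d_H$, or more conceptually from Nussbaum's Finsler identity (quoted in the introduction), which realizes $d_H$ as the integrated length of the oscillation seminorm and forces the two global Lipschitz constants to coincide. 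The main obstacle, where I would spend most care, is twofold: justifying the reduction to the slice together with the boundary passage $d_H(T\xi,T\eta)\le\Delta$ (the vectors $\xi,\eta$ genuinely lie on $\partial\C_1$, so $T\xi,T\eta$ may lie on $\partial\C_2$, and one must argue the relevant distance stays finite and $\le\Delta$), and constructing the near–extremal family cleanly enough that $d\to0$ and $u\to u^{*}$ happen simultaneously—this is the step that pins the constant to $\tanh(\Delta/4)$ rather than merely bounding it.
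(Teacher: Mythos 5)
The paper does not prove this theorem: it is quoted verbatim from Birkhoff and Bushell, so there is no internal proof to compare against. Your reconstruction is the standard one from those sources (and from Eveson--Nussbaum's elementary proof of the Birkhoff--Hopf theorem): decompose $x\sim\xi+\eta$, $y\sim\xi+e^{d}\eta$ with $\xi=Mx-y$, $\eta=y-mx$, pass to the two-dimensional slice $\C_2\cap\Span(T\xi,T\eta)$ (your justification that the ambient and slice Hilbert metrics agree there is the right one), and optimize the cross-ratio; the computation $\sup_u\log f(u)=2\log\bigl(\cosh((D+d)/4)/\cosh((D-d)/4)\bigr)$ and the concavity argument giving the bound $d\tanh(D/4)$ are correct. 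Three loose ends are worth tightening. First, when $\Diam T=\infty$ there \emph{is} something to prove: nonexpansiveness only gives the upper bound $1$, and you still need your near-extremal family to show the supremum actually equals $1$. Second, in the lower bound you should take $\xi_0,\eta_0\in\C_1^0$ (the diameter is by definition a supremum over interior pairs), which in fact simplifies your construction: setting $x_s=\xi_0+s\eta_0$, $y_s=\xi_0+se^{d}\eta_0$ gives $d_H(x_s,y_s)\leq d$ for free, and choosing $s$ to hit $u^{*}$ exactly for each $d$ removes the "simultaneity" worry you raise; the boundary-passage issue $d_H(T\xi,T\eta)\leq\Diam T$ only arises in the \emph{upper} bound and is handled by perturbing $\xi,\eta$ by $\epsilon(\xi+\eta)\in\R_{>0}\cdot x\subset\C_1^0$ inside the fixed slice. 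Third, the first equality (oscillation ratio versus metric ratio) is asserted rather than proved; the "identical reduction" for $\omega$ is a parallel but genuinely different two-dimensional optimization, whereas the Finsler route is immediate here: for linear $T$, Theorem~\ref{th-Nus94} with $f=T$ gives $\lambda_0=\sup_{x,v}\omega(Tv/Tx)/\omega(v/x)=k_0$ directly (using that $\omega(\cdot/x)$ is unchanged by adding multiples of $x$, so the supremum over $v\in\cX$ equals that over $v\in\C_1^0$), so you should simply commit to that second option.
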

Following~\cite{ReebWolf2011}, we define the projective diameter of $T^\star$ by:
$$
\Diam T^\star=\sup \{d_H(T^\star(u),T^\star(v)):  u,v\in \C_2^\star\backslash 0\}.
$$
Note that $\Diam T=\Diam T^\star$. This is because
$$
\begin{array}{ll}
\displaystyle\sup_{x,y\in\C_1^0}  \frac{M(T(x)/T(y))}{m(T(x)/T(y))}
&=\displaystyle\sup_{x,y\in\C_1^0} \sup_{u,v\in \C_2^\star\backslash 0} \frac{\<u,T(x)>\<v,T(y)>}{\<u,T(y)>\<v,T(x)>}\\
&=
\displaystyle\sup_{u,v\in \C_2^\star\backslash 0 } \frac{M(T^ \star(u)/T^ \star(v))}{m(T^ \star(u)/T^ \star(v))}
\end{array}
$$
\begin{coro}[Compare with~\cite{ReebWolf2011}]\label{th-TdiamT}
Let $T:\cX_1 \to \cX_2$ 
be a bounded linear map such that $T(\unit_1)\in \mathbb{R} \unit_2$ and $T(\C_1^0)\subset \C_2^0$ , then:
$$
\othernorm{T^ \star}_H^ \star=\othernorm{T}_H\leq \tanh(\frac{\Diam T}{4})=\tanh(\frac{\Diam T^ \star}{4})
$$
\end{coro}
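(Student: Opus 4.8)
The two outer equalities in the statement require nothing new. The identity $\othernorm{T^\star}_H^\star=\othernorm{T}_H$ is the operator/adjoint duality recalled just before Theorem~\ref{th-opnorm}, and $\Diam T=\Diam T^\star$ is exactly the identity established in the displayed computation immediately preceding the corollary. Hence the whole content reduces to the single inequality $\othernorm{T}_H\le \tanh(\Diam T/4)$, and the plan is to deduce it from Birkhoff's contraction formula by matching the \emph{fixed} base points $\unit_1,\unit_2$ of the Hopf seminorms to the \emph{varying} base points appearing in that formula.

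Since $T(\C_1^0)\subset\C_2^0$ and $\unit_1\in\C_1^0$, the vector $T(\unit_1)$ lies in $\C_2^0$; combined with $T(\unit_1)\in\R\unit_2$ this forces $T(\unit_1)=c\unit_2$ with $c>0$. Working in the relevant consensus normalization $T(\unit_1)=\unit_2$, so that the base point $\unit_1$ is sent exactly onto $\unit_2$, the first step is to restrict the supremum defining $\othernorm{T}_H=\sup\{\hilbert{Tz}{\unit_2}/\hilbert{z}{\unit_1}:\hilbert{z}{\unit_1}\neq 0\}$ from all of $\cX_1$ to the interior $\C_1^0$. Indeed, for an arbitrary $z$ one has $z+\lambda\unit_1\in\C_1^0$ once $\lambda$ is large, while both $\hilbert{z}{\unit_1}$ and $\hilbert{Tz}{\unit_2}=\hilbert{Tz}{T\unit_1}$ are unchanged when $z$ is replaced by $z+\lambda\unit_1$ (the oscillation annihilates multiples of its base point, and $T(\lambda\unit_1)=\lambda\unit_2$). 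Thus $\othernorm{T}_H=\sup_{z\in\C_1^0}\hilbert{Tz}{\unit_2}/\hilbert{z}{\unit_1}$.

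With this reduction in hand, the second step specializes Birkhoff's formula. Taking $y=\unit_1$ in the oscillation form of the Birkhoff theorem stated above, for every $z\in\C_1^0$ with $\hilbert{z}{\unit_1}\neq 0$ we obtain
\[
\frac{\hilbert{Tz}{\unit_2}}{\hilbert{z}{\unit_1}}
=\frac{\hilbert{Tz}{T\unit_1}}{\hilbert{z}{\unit_1}}
\le \sup_{x,y\in\C_1^0}\frac{\hilbert{Tx}{Ty}}{\hilbert{x}{y}}
=\tanh\!\left(\frac{\Diam T}{4}\right).
\]
Taking the supremum over $z\in\C_1^0$ yields $\othernorm{T}_H\le\tanh(\Diam T/4)$, which together with the two equalities of the first paragraph gives the claim.

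The delicate point is precisely this base-point matching. One must verify that Birkhoff's a priori two-parameter contraction ratio may be evaluated along the single base point $\unit_1$, and that the fixed-base-point quantity $\hilbert{Tz}{\unit_2}$ genuinely coincides with the oscillation $\hilbert{Tz}{T\unit_1}$ at the image base point. This rests entirely on $\unit_1,\unit_2$ being interior, on $T$ carrying the interior into the interior with $T\unit_1$ proportional to $\unit_2$ (so that the normalization $T\unit_1=\unit_2$ is the natural one), and on the projective invariance of the oscillation under positive scaling of its second argument. Everything else is a direct appeal to the already-established Birkhoff formula and to the duality identities recalled above.
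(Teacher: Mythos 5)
Your proof is correct and follows essentially the same route as the paper's: both reduce the claim to the single inequality $\othernorm{T}_H\le\tanh(\Diam T/4)$, rewrite $\othernorm{T}_H$ as a supremum over $z\in\C_1^0$ using the invariance of the oscillation under translations by $\R\unit_1$, and then invoke Birkhoff's contraction formula with $y=\unit_1$. Your explicit handling of the normalization $T(\unit_1)=c\unit_2$, $c>0$, is a detail the paper's two-line proof leaves implicit, but it does not change the argument.
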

\begin{proof}
 It is sufficient to prove the inequality. For this, note that
$$
\othernorm{T}_H=\sup_{x\in \cX_1/\R\unit_1} \omega(T(x),\unit_2)/\omega(x,\unit_1)=\sup_{x \in \C_1^0} \omega(T(x),\unit_2)/\omega(x,\unit_1).
$$
Then we apply Birkhoff's contraction formula.
\end{proof}
\begin{rem}
Reeb et al~\cite{ReebWolf2011} showed in a different way that
$$
\othernorm{T^\star}_H^\star\leq \tanh(\frac{\Diam T^ \star}{4}).
$$
The proof above shows that as soon as the duality formula
$\othernorm{T^ \star}_H^ \star=\othernorm{T}_H$ has been
obtained, the latter inequality follows from Birkhoff
contraction formula.
\end{rem}
Nussbaum~\cite{nussbaum94} 
showed that the Lipschitz constant  in Hilbert's projective metric of a non-linear map is determined by the operator norm of its derivative
with respect to Hopf's oscillation seminorm. We first use this
result to deduce a characterization of the contraction rate of non linear maps in Hilbert's metric. We first quote the result of~\cite{nussbaum94} which we shall use.
\begin{theo}[Coro 2.1,~\cite{nussbaum94}]\label{th-Nus94}
 Let $U\subset \C^ 0$ be a convex open set such that $tU\subset U$ for all $t>0$. Let $f:U\rightarrow \C^ 0$ be a continuously differentiable map
such that $\omega(f(x)/f(y))=0$ whenever $x,y\in U$ and $\omega(x/y)=0$. For each $x\in U$ define $\lambda(x)$, $\lambda_0$ and $k_0$ by:
$$
\lambda(x):=\inf\{c>0: \myhilbert(Df(x)v/f(x))\leq c \myhilbert(v/x) \mathrm{~for~all~}v\in \cX\},
$$
$$
\lambda_0:=\sup\{\lambda(x):x\in U\},
$$
$$
k_0:=\inf\{c>0: d_H(f(x),f(y))\leq c d_H(x,y)\mathrm{~for~all~}x,y\in U\}.
$$
Then it follows that $\lambda_0=k_0$.
\end{theo}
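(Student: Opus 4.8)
The plan is to exploit the fact, recalled in the introduction and due to Nussbaum, that $d_H$ is the weak Finsler metric whose infinitesimal norm at a point $z\in\C^0$ is the Hopf oscillation seminorm $\omega(\cdot/z)$; that is, $d_H(u,v)=\inf_\gamma\int_0^1\omega(\dot\gamma(s)/\gamma(s))\,ds$, the infimum being over piecewise $C^1$ paths $\gamma$ in $\C^0$ joining $u$ to $v$. With this at hand, the identity $\lambda_0=k_0$ is precisely the equality between the infinitesimal Lipschitz constant of $f$ and its global Lipschitz constant, and I would establish the two inequalities separately.

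First I would prove $k_0\le\lambda_0$ by integrating along paths. Fix $x,y\in U$ and take the straight segment $\gamma(s)=(1-s)x+sy$, which lies in $U$ by convexity and is a $d_H$-geodesic, so that $\int_0^1\omega(\dot\gamma(s)/\gamma(s))\,ds=d_H(x,y)$. Its image $f\circ\gamma$ is a piecewise $C^1$ path in $\C^0$ from $f(x)$ to $f(y)$ with velocity $Df(\gamma(s))\dot\gamma(s)$, so the Finsler formula together with the definition of $\lambda(\cdot)$ gives
\[
d_H(f(x),f(y))\le\int_0^1\omega\bigl(Df(\gamma(s))\dot\gamma(s)/f(\gamma(s))\bigr)\,ds\le\int_0^1\lambda(\gamma(s))\,\omega(\dot\gamma(s)/\gamma(s))\,ds\le\lambda_0\,d_H(x,y).
\]
Since this exhibits $\lambda_0$ as an admissible constant $c$ in the definition of $k_0$, we conclude $k_0\le\lambda_0$.

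For the reverse inequality $\lambda_0\le k_0$ I would differentiate. Fix $x\in U$ and $v\in\cX$, and set $\gamma(t)=x+tv$, which lies in $\C^0$ for small $t>0$. A direct computation with the definitions~\eqref{a-eq4} shows that $M(\gamma(t)/x)$ and $m(\gamma(t)/x)$ are affine in $t$ for small $t>0$, whence $\omega(\gamma(t)/x)=t\,\omega(v/x)$ exactly and $d_H(x+tv,x)=t\,\omega(v/x)+o(t)$. On the image side, differentiability gives $f(x+tv)=f(x)+t\,Df(x)v+o(t)$; since $M(\cdot/f(x))$ and $m(\cdot/f(x))$ are Lipschitz in the numerator (here the normality of the cone, equivalently the equivalence of $\othernorm{\cdot}_T$ with $\othernorm{\cdot}$, is used to absorb the remainder), one obtains $d_H(f(x+tv),f(x))=t\,\omega(Df(x)v/f(x))+o(t)$. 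Inserting both expansions into $d_H(f(x+tv),f(x))\le k_0\,d_H(x+tv,x)$, dividing by $t$ and letting $t\to0^+$ yields $\omega(Df(x)v/f(x))\le k_0\,\omega(v/x)$ for every $v$, hence $\lambda(x)\le k_0$ and, taking the supremum over $x$, $\lambda_0\le k_0$.

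The main obstacle is this second inequality, and specifically the rigorous first-order expansion of $d_H(f(x+tv),f(x))$: one must control the nonlinear remainder $f(x+tv)-f(x)-t\,Df(x)v=o(t)$ uniformly through the sublinear but only Lipschitz functionals $M(\cdot/f(x))$ and $m(\cdot/f(x))$. Here the hypothesis that $\omega(f(x)/f(y))=0$ whenever $\omega(x/y)=0$ plays a silent but essential role: it forces $f$ to send rays into rays, so that $Df(x)x\in\R f(x)$ and $\omega(Df(x)\cdot/f(x))$ descends to a genuine seminorm on the quotient $\cX/\R f(x)$, compatible with the quotient on which $d_H$ is defined; without it the two sides of the inequality would not be comparable. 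The Finsler integral representation of $d_H$ and the geodesic property of straight segments are taken as known, so they pose no difficulty here.
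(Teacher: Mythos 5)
The paper does not prove this statement: it is quoted verbatim from Nussbaum (\cite[Coro.~2.1]{nussbaum94}) and used as a black box, so there is no in-paper argument to compare yours against. That said, your two-sided Finsler argument is sound and is essentially the standard (and presumably Nussbaum's own) proof. For $k_0\le\lambda_0$ you integrate the infinitesimal bound along the segment $\gamma(s)=(1-s)x+sy$, using that this segment attains the infimum in the Finsler representation of $d_H$, i.e.\ $\int_0^1\omega(\dot\gamma(s)/\gamma(s))\,ds=d_H(x,y)$; this is exactly \cite[Thm~2.1]{nussbaum94}, the same external input the authors invoke in their proof of Theorem~\ref{th-Hilbertmetricnonlinearflow}, so it is fair to take it as known here. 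For $\lambda_0\le k_0$ your first-order expansions are correct: $M((x+tv)/x)=1+tM(v/x)$ and $m((x+tv)/x)=1+tm(v/x)$ give $d_H(x+tv,x)=t\,\omega(v/x)+o(t)$ exactly, and on the image side the estimate $|M(u/f(x))-M(w/f(x))|\le\othernorm{u-w}_T$ together with normality (equivalence of $\othernorm{\cdot}_T$ with the ambient norm) absorbs the remainder $f(x+tv)-f(x)-tDf(x)v=o(t)$, yielding $d_H(f(x+tv),f(x))=t\,\omega(Df(x)v/f(x))+o(t)$. Your observation that the hypothesis $\omega(f(x)/f(y))=0$ whenever $\omega(x/y)=0$ forces $Df(x)x\in\R f(x)$, so that the degenerate directions $v\in\R x$ cause no trouble, correctly handles the only edge case. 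I see no gap.
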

Then a direct corollary of Theorem~\ref{th-opnorm} and~\ref{th-Nus94} yields the Lipschitz constant in Hilbert's metric of a (non-linear) map.
\begin{coro}\label{coro-Hilbertmetricnonlinearmap}
 Let $U$ and $f$ be as in Theorem~\ref{th-Nus94}. Then:
$$
\sup_{\substack{x,y\in U\\d_H(x,y)\neq 0}}
 \frac{d_H(f(x),f(y))}{d_H(x,y)}=\sup_{x\in U} \sup_{\nu,\pi\in \extr\pP(f(x))}\sup_{z\in[0,x]} \<\nu-\pi,Df(x)z>.
$$
\end{coro}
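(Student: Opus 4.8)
The plan is to combine the two ingredients already established: Nussbaum's theorem (Theorem~\ref{th-Nus94}), which reduces the global Lipschitz constant of $f$ in Hilbert's metric to a pointwise supremum of infinitesimal Lipschitz constants $\lambda(x)$, and the operator-norm formula (Theorem~\ref{th-opnorm}), which evaluates each $\lambda(x)$ explicitly. First I would note that the left-hand side of the corollary is precisely the quantity $k_0$ appearing in Theorem~\ref{th-Nus94}, so that this theorem already gives $k_0=\lambda_0=\sup_{x\in U}\lambda(x)$. It therefore suffices to identify, for each fixed $x\in U$, the number $\lambda(x)$ with the inner double supremum on the right-hand side.

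The key observation is that $\lambda(x)$ is exactly the operator norm $\|Df(x)\|_H$ of $Df(x)$ regarded as a linear map from the quotient seminormed space $(\cX/\R x,\othernorm{\cdot}_H)$, with unit $x$, to $(\cX/\R f(x),\othernorm{\cdot}_H)$, with unit $f(x)$. Indeed, the defining requirement $\omega(Df(x)v/f(x))\le c\,\omega(v/x)$ for all $v$ says exactly that $Df(x)$ has operator norm at most $c$ between these two quotients, so the infimum defining $\lambda(x)$ equals the supremum of the ratios $\omega(Df(x)v/f(x))/\omega(v/x)$, that is, $\lambda(x)=\|Df(x)\|_H$.

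To invoke Theorem~\ref{th-opnorm} with $\unit_1=x$ and $\unit_2=f(x)$, I must check its hypothesis $Df(x)(x)\in\R f(x)$. This follows from the homogeneity-type assumption carried by Theorem~\ref{th-Nus94}: since $tU\subset U$ for all $t>0$, the ray $t\mapsto tx$ lies in $U$, and $\omega(tx/x)=0$ forces $\omega(f(tx)/f(x))=0$, i.e.\ $f(tx)=g(t)\,f(x)$ for a scalar function $g>0$; differentiating at $t=1$ gives $Df(x)(x)=g'(1)\,f(x)\in\R f(x)$. In particular $Df(x)$ maps $\R x$ into $\R f(x)$, hence genuinely descends to a bounded operator between the two quotient spaces, and Theorem~\ref{th-opnorm} applies and yields
$$
\lambda(x)=\|Df(x)\|_H=\sup_{\substack{\nu,\pi\in\extr\pP(f(x))\\\nu\perp\pi}}\sup_{z\in[0,x]}\<\nu-\pi,Df(x)z>.
$$

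It remains to remove the disjointness restriction $\nu\perp\pi$. By the elementary chain in which one enlarges the index set from disjoint extreme pairs, to all extreme pairs, to all pairs of elements of $\pP(f(x))$, the outermost and innermost suprema both coincide with $\|Df(x)\|_H$ by the two displayed formulas of Theorem~\ref{th-opnorm}, which forces the intermediate (unrestricted extreme-point) supremum to equal $\lambda(x)$ as well. Taking the supremum over $x\in U$ and using $k_0=\sup_{x\in U}\lambda(x)$ then gives the asserted identity. The only genuinely delicate step is the verification that $Df(x)(x)\in\R f(x)$, needed to legitimately apply Theorem~\ref{th-opnorm}; once $\lambda(x)$ is recognized as the relevant operator norm, everything else is a direct substitution.
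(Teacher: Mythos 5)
Your proposal is correct and follows exactly the route the paper intends: the corollary is stated there as a direct consequence of Theorem~\ref{th-Nus94} combined with Theorem~\ref{th-opnorm}, with $\lambda(x)$ identified as the operator norm $\|Df(x)\|_H$ between the quotient spaces with units $x$ and $f(x)$. You in fact supply two details the paper leaves implicit --- the verification that $Df(x)(x)\in\R f(x)$ so that Theorem~\ref{th-opnorm} applies, and the sandwiching argument showing the unrestricted extreme-point supremum agrees with the disjoint one --- both of which are handled correctly.
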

\begin{rem}
This corollary generalizes Corollary~2.1 of~\cite{nussbaum94}, which gives
a similar characterization in terms of extreme points, when $\cX=\mathbb{R}^n$
and $\C=\mathbb{R}_+^n$.
Note that 
in the finite dimensional case, the suprema over the variable $z$ 
and over the variables $\nu,\pi$ are attained
(see Remark~\ref{rk-reached}). Moreover, the supremum over $z$ 
is attained at an extreme point of $[0,x]$.
\end{rem}

\section{Application to discrete consensus operators on cones}\label{sec-consensusoperator}
A classical result, which goes back to D\oe blin and Dobrushin, characterizes
the Lipschitz constant of a Markov matrix acting on the space
of measures (i.e., a row stochastic matrix acting on the left), 
with respect to the total variation norm (see the discussion
in Remark~\ref{rem6} below). The same constant characterizes the contraction ratio with respect to the ``diameter'' (Hopf oscillation seminorm)
of the consensus system driven by this Markov matrix (i.e., a row
stochastic matrix acting on the right).
Consensus operators on cones extend Markov matrices.
In this section, we extend to these abstract operators
a number of known properties of Markov matrices.

A linear  map $T:\cX\to \cX$ is a {\em consensus operator} with respect to
a unit vector $\unit$ in the interior $\C^ 0$ of a closed convex pointed cone $\C\subset \cX$ if it satisfies the two following properties: 
\begin{itemize}
\item[(i)] $T$ is positive, i.e., $T(\C) \subset \C$.
\item[(ii)] $T$ preserves the unit element $\unit$, i.e., $T(\unit)=\unit$.
\end{itemize}

\begin{example}\label{ex-markovmt}
When $\cX=\R^n$, $\C$ is the standard orthant and $\unit$ is the standard unit vector $\bold 1$ (Example~\ref{rem1}), a linear map $T(x)=Ax$ is a consensus operator if and only if $A$ is a row stochastic matrix. The operator norm is the contraction rate of the matrix $A$ with respect to the diameter $\Delta$:
$$
\othernorm{T}_H=\tau(A):=\sup_x \frac{\Delta(Ax)}{\Delta(x)},
$$
and the dual operator norm is the Lipschitz constant of $A^{\top}$ on $\pP(\bold 1)$ with respect to the total variation distance:
$$
\othernorm{T}_H^\star=\delta(A):=\sup_{\mu,\nu \in \pP(\bold 1)} \frac{\othernorm{A^{\top}\mu -A^{\top}\nu}_{TV}}{\othernorm{\mu-\nu}_{TV}}
$$
The value $\tau(A)$ allows one to bound the convergence rate of the stationary linear consensus system the dynamics of which is given by the matrix $A$, \cite{SpeilmanMorse,Blondel05convergencein}.
 The value $\delta (A)$ is known 
as the {\em ergodicity coefficient} of the Markov chain with transition probability matrix $A^{\top}$, see~\cite{PeresLevin}.
\end{example}
\begin{example}\label{ex-krausmap}
 When $\cX=\sym_n$, $\C=\sym_n^+$ and $\unit= I_n$ (Example~\ref{ex-Symn}), the linear map $\Phi:\sym_n\rightarrow \sym_n$ defined by
\begin{align}\label{a-PhiX}
\Phi(X)=\sum_{i=1}^m V_i^*XV_i,\qquad \sum_{i=1}^m V_i^*V_i=I_n
\end{align}
is a consensus operator. The dual operator is then given by:
$$
\Psi(X)=\sum_{i=1}^* V_i XV_i^*.
$$
Both maps are completely positive. They represent a purely quantum channel~\cite{ReebWolf2011,sepulchre}.
The map $\Phi$ is unital and acts between spaces of operators while the adjoint map $\Psi$ is trace-preserving and acts 
between spaces of states (density matrices). The operator norm of $\Phi: \sym_n/\R I_n \rightarrow \sym_n/\R I_n$ is the contraction rate of the diameter of the spectrum:
$$
\othernorm{\Phi}_H=\sup_{X\in \sym_n} \frac{\lambda_{\max}(\Phi(X))-\lambda_{\min}(\Phi(X))}{\lambda_{\max}(X)-\lambda_{\min}(X)}.
$$
The operator norm of the adjoint map $\Psi: \pP(I_n)\rightarrow \pP(I_n)$ is the contraction rate of the trace distance:
$$
\othernorm{\Psi}_H^\star=\sup_{\rho_1,\rho_2\in \pP(I_n)}\frac{\othernorm{\Psi(\rho_1)-\Psi(\rho_2)}_1}{\othernorm{\rho_1-\rho_2}_1} .
$$
The value $\othernorm{\Phi}_H$ and $\othernorm{\Psi}_H^\star$ are the noncommutative counterparts of
$\tau(\cdot)$ and $\delta(\cdot)$.
\end{example}

 A direct application of Theorem~\ref{th-opnorm} leads to following
characterization of operator norm, which will be seen
to extend Dobrushin's formula (see Remark~\ref{rem6} below).
\begin{coro}\label{coro-markTH}
 Let $T: \cX \to \cX$ be a consensus operator with respect to $\unit$. Then,
\[
\|T\|_H = \|T^\star\|_H^\star =1-\inf_{\substack{\nu,\pi \in \operatorname{extr}\cP(\unit)\\ \nu\perp\pi}}\inf_{x\in[0, \unit]} \<\pi,T(x)>+\<\nu, T(\unit-x)>.
\]
\end{coro}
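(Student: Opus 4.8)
The plan is to derive this corollary directly from Theorem~\ref{th-opnorm}, specializing that general statement to the case $\cX_1=\cX_2=\cX$, $\unit_1=\unit_2=\unit$, with $T$ a consensus operator. In particular $T(\unit)=\unit\in\R\unit$, so the hypotheses of Theorem~\ref{th-opnorm} are satisfied, and the equality $\|T\|_H=\|T^\star\|_H^\star$ is the duality between the operator norm and the norm of its adjoint already recorded just before that theorem. It therefore only remains to rewrite the last supremum in~\eqref{e-carac-TH} in the announced form.

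First I would fix extreme disjoint points $\nu,\pi\in\extr\cP(\unit)$ and an element $x\in[0,\unit]$. The decisive observation is that, because $T(\unit)=\unit$ and $\nu\in\cP(\unit)$ satisfies $\langle\nu,\unit\rangle=1$, one has
\[
\langle\nu,T(x)\rangle+\langle\nu,T(\unit-x)\rangle=\langle\nu,T(\unit)\rangle=\langle\nu,\unit\rangle=1,
\]
so that $\langle\nu,T(x)\rangle=1-\langle\nu,T(\unit-x)\rangle$. Substituting this into $\langle\nu-\pi,T(x)\rangle=\langle\nu,T(x)\rangle-\langle\pi,T(x)\rangle$ gives
\[
\langle\nu-\pi,T(x)\rangle=1-\bigl(\langle\nu,T(\unit-x)\rangle+\langle\pi,T(x)\rangle\bigr).
\]

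Next, since $x$ ranges over $[0,\unit]$, taking the supremum over $x$ turns the right-hand side into $1$ minus an infimum, whence
\[
\sup_{x\in[0,\unit]}\langle\nu-\pi,T(x)\rangle=1-\inf_{x\in[0,\unit]}\bigl(\langle\nu,T(\unit-x)\rangle+\langle\pi,T(x)\rangle\bigr).
\]
Finally I would take the supremum over the pairs $\nu,\pi\in\extr\cP(\unit)$ with $\nu\perp\pi$; pulling the additive constant $1$ out of the supremum converts the outer supremum into an infimum, yielding exactly the claimed formula. I expect no genuine obstacle here: the argument is a purely algebraic rearrangement of Theorem~\ref{th-opnorm}, the only points to verify being that $\unit-x\in[0,\unit]$ whenever $x\in[0,\unit]$ (immediate from the definition of the order interval) and that the normalization $\langle\nu,\unit\rangle=\langle\pi,\unit\rangle=1$ built into $\cP(\unit)$, together with $T(\unit)=\unit$, is precisely what makes the constant term equal to $1$.
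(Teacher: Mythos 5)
Your proposal is correct and follows exactly the paper's own argument: invoke Theorem~\ref{th-opnorm} with $\cX_1=\cX_2=\cX$ and $\unit_1=\unit_2=\unit$, then use $T(\unit)=\unit$ and $\langle\nu,\unit\rangle=1$ to rewrite $\langle\nu-\pi,T(x)\rangle$ as $1-\langle\pi,T(x)\rangle-\langle\nu,T(\unit-x)\rangle$ and convert the supremum into one minus an infimum. The only difference is that you spell out the algebraic substitution that the paper compresses into a single displayed identity.
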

\begin{proof}
Since $T(\unit)=\unit$, we have: 
$$
\sup_{\substack{\nu,\pi \in \operatorname{extr}\cP(\unit)\\ \nu\perp\pi}} \sup_{ x\in[0, \unit]}
\<\nu-\pi,T(x)>=\sup_{\substack{\nu,\pi \in \operatorname{extr}\cP(\unit)\\ \nu\perp\pi}}\sup_{x\in[0, \unit]} 1-\<\pi,T(x)>-\<\nu, T(\unit-x)>.
$$
\end{proof}

\begin{rem}
In the finite dimensional case, as already noted in Remark~\ref{rk-reached}, the supremum is reached at $\extr[0,\unit]$.
\end{rem}

\begin{rem}\label{rem6}
 In the case of a stochastic matrix $A$ (Example~\ref{ex-markovmt}), Corollary~\ref{coro-markTH} implies that:
\begin{align}\label{a-rnn}
\tau(A)=\delta(A)=\frac{1}{2}\sup_{i\neq j} \|A^{\top}e_i- A^{\top}e_j\|_1.
\end{align}
This is a known result in the study of Markov chain~\cite{Seneta90}. 
The value $\tau(A)$ is known under the name of {\em Dobrushin's ergodic coefficient} of the stochastic matrix $A$~\cite{Dobrushin56}. It is explicitly given by:
\begin{align}
\tau(A)&=
1-\displaystyle\min_{i\neq j}\sum_{s=1}^n \min(A_{is},A_{js}).
\label{a-tauA}
\end{align}
Indeed, the characterization of $\tau(A)=\|T\|_H$ by the last supremum
in Corollary~\ref{coro-markTH} yields
\begin{align*}
\tau(A)&=1-\min_{i\neq j} \min_{I\subset\{1,\dots,n\}}(\sum_{k\in I} A_{ik}+\sum_{k\notin I}A_{jk})\\&
\end{align*}
from which~\eqref{a-tauA} follows. 

A simple classical situation in which $\tau(A)<1$ is when
there is a {\em D\oe blin state}, i.e.,
an element $j\in\{1,\dots,n\}$ 
such that $A_{ij}>0$ holds for all $i\in\{1,\dots,n\}$. 
\end{rem}

Specializing Corollary~\ref{coro-markTH} to the case of quantum channels (Example~\ref{ex-krausmap}), we 
obtain the noncommutative version of Dobrushin's ergodic coefficient. 
\begin{coro}\label{coro-noncommDobr}
Let $\Phi$ be a quantum channel defined in~\eqref{a-PhiX}. 
Then,
 \begin{align}\label{a-PhiHPsiH}
\othernorm{\Phi}_H=\othernorm{\Psi}_H^\star=1-
\displaystyle\min_{\substack{u,v:u^*v=0\\u^*u=v^*v=1}}\min_{\substack{X=(x_1,\dots,x_n)\\XX^*=I_n}} \sum_{i=1}^ n \min \{u^*\Phi(x_ix_i^ *)u,v^*\Phi(x_ix_i^ *)v\}
\end{align}
\end{coro}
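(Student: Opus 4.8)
The plan is to specialize Corollary~\ref{coro-markTH} to the quantum channel $\Phi$ of Example~\ref{ex-krausmap}, translating each abstract ingredient into its concrete Hermitian-matrix form using the identifications already established in the excerpt. First I would invoke Corollary~\ref{coro-markTH} directly, which gives
\[
\othernorm{\Phi}_H=1-\inf_{\substack{\nu,\pi\in \extr\pP(I_n)\\ \nu\perp\pi}}\inf_{X\in[0,I_n]}\<\pi,\Phi(X)>+\<\nu,\Phi(I_n-X)>.
\]
The whole task is then to rewrite the three quantifiers. By Remark~\ref{rem-sym1}, the extreme points of $\pP(I_n)$ are the rank-one projectors $uu^*$ with $u^*u=1$, so $\nu=vv^*$ and $\pi=uu^*$; by Remark~\ref{rem-xx*yy*}, the disjointness condition $\nu\perp\pi$ is exactly $u^*v=0$. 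This accounts for the outer minimization over $u,v$ with $u^*v=0$, $u^*u=v^*v=1$.

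Next I would deal with the inner infimum over $X\in[0,I_n]$, i.e.\ over Hermitian $X$ with $0\le X\le I_n$. Since $\<uu^*,Y>=\Tr(uu^*Y)=u^*Yu$ for Hermitian $Y$, the objective becomes $\inf_X\, u^*\Phi(X)u+v^*\Phi(I_n-X)v$. The key observation, mirroring the passage from the first displayed formula to~\eqref{a-tauA} in Remark~\ref{rem6}, is that the infimum of a linear functional over the order interval $[0,I_n]$ is attained at an extreme point. The extreme points of $[0,I_n]$ in $\sym_n$ are precisely the orthogonal projectors, so I would restrict $X$ to projectors $P$; writing $I_n-X=I_n-P$ as the complementary projector, the objective at an extreme point separates across a choice of subspace. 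To recover the stated form I would write $X=\sum_i x_ix_i^*$ over the columns of a matrix $X=(x_1,\dots,x_n)$ with $XX^*=I_n$ being the completeness relation, so that $\sum_i u^*\Phi(x_ix_i^*)u = u^*\Phi(I_n)u = u^*u =1$ using unitality, and similarly for $v$; the minimization over how each rank-one piece $x_ix_i^*$ is assigned to the $u$-side or the $v$-side then produces $\sum_i \min\{u^*\Phi(x_ix_i^*)u,\,v^*\Phi(x_ix_i^*)v\}$, exactly as in the scalar case where $\min_{I}(\sum_{k\in I}A_{ik}+\sum_{k\notin I}A_{jk})=\sum_s\min(A_{is},A_{js})$.

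The main obstacle I anticipate is making the last step fully rigorous: one must justify that optimizing the linear functional over all $X\in[0,I_n]$ is equivalent to the combined optimization over a resolution of the identity $\sum_i x_ix_i^*=I_n$ together with a per-index $\min$. The clean way is to argue that for \emph{fixed} $u,v$ the map $X\mapsto u^*\Phi(X)u+v^*\Phi(I_n-X)v = \Tr\bigl((\Phi^\star(uu^*)-\Phi^\star(vv^*))X\bigr)+v^*v$ is linear in $X$, so its infimum over $[0,I_n]$ is $-\!\sum_{\lambda<0}\lambda$ summed over the negative eigenvalues of $\Phi^\star(uu^*)-\Phi^\star(vv^*)$, attained at the spectral projector onto the negative eigenspace; one then checks that diagonalizing in that eigenbasis and writing $x_ix_i^*$ for the rank-one spectral projectors realizes the termwise minimum, while the completeness constraint $XX^*=I_n$ encodes that $\{x_i\}$ is an orthonormal basis (equivalently a projective measurement). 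The finite dimensionality of $\sym_n$ guarantees all suprema/infima are attained, as noted in Remark~\ref{rk-reached}, so no closure or net arguments are needed; the content is entirely the extreme-point reduction plus the trace-duality rewriting $\<uu^*,\Phi(X)>=u^*\Phi(x_ix_i^*)u$.
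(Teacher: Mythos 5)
Your proposal is correct and follows essentially the same route as the paper: specialize Corollary~\ref{coro-markTH}, use Remarks~\ref{rem-sym1} and~\ref{rem-xx*yy*} to identify the disjoint extreme points of $\pP(I_n)$ as orthogonal rank-one projectors, reduce the infimum over $[0,I_n]$ to its extreme points (the projectors), and decompose a projector as $\sum_{i\in J}x_ix_i^*$ over an orthonormal basis so that the minimization over $J$ collapses to the termwise minimum, exactly as in the scalar Dobrushin computation. Your additional spectral-projector justification of where the linear functional attains its minimum is a harmless (and slightly more explicit) variant of the same extreme-point argument.
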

\begin{proof}
It can be easily checked that 
$$
\extr [0,I_n]=\{P\in \sym_n:P^ 2=P\}.
$$
Hence, Corollary~\ref{coro-markTH} and Remark~\ref{rem-xx*yy*}
yield:
\begin{align*}
\othernorm{\Phi}_H=\othernorm{\Psi}_H^\star&=
1-\displaystyle\min_{\substack{u,v:u^*v=0\\u^*u=v^*v=1}}\min_{\substack{Y^ 2=Y}}u^*\Phi(I_n-Y)u+v^*\Phi(Y)v\\
&=1-\displaystyle\min_{\substack{u,v:u^*v=0\\u^*u=v^*v=1}}\min_{\substack{X=(x_1,\dots,x_n)\\XX^*=I_n}} \min_{J\subset\{1,\dots,n\}} \sum_{i\in J} u^*\Phi(x_ix_i^ *)u+ \sum_{i\notin J}v^*\Phi(x_ix_i^ *)v \\
\end{align*}
from which~\eqref{a-PhiHPsiH} follows.\qedhere
\end{proof}

We now make the following basic observations for a consensus operator $T:\cX\rightarrow \cX$:
$$
M(T(x)/\unit)\leq M(x/\unit),\enspace m(T(x)/\unit)\geq m(x/\unit),\enspace \forall x\in \cX.
$$
It follows that $\othernorm{T}_H\leq 1$.
The case when $\othernorm{T}_H<1$ or equivalently $\othernorm{T^\star}_{H}^\star<1$
is of special interest, as shown by the following theorem, which
shows that the iterates of $T$ convergence to a rank one projector
with a rate bounded by $\|T\|_H$.
\begin{theo}[Geometric convergence to consensus]\label{th-ex-con}
If $\othernorm{T}_H<1$ or equivalently $\othernorm{T^\star}_{H}^\star<1$, then there is $\pi \in \pP(\unit)$ 
such that for all $x\in \cX$
\[
\|T^n(x)-\<\pi,x>\unit\|_T \leq (\othernorm{T}_H)^n \|x\|_H, 
\]
and for all $\mu \in \pP(\unit)$
\[
\|(T^\star)^n(\mu)-\pi \|_H^\star \leq (\othernorm{T}_H)^n.
\]
\end{theo}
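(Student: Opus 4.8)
The plan is to do all the analytic work on the dual side, where the hypothesis makes $T^\star$ a strict contraction, and then transfer the resulting bound to the primal side by duality. First I would record that, because $T$ is positive and unital, its adjoint maps the simplex into itself: $T^\star$ preserves the dual cone $\C^\star$, and $\<T^\star(\mu),\unit>=\<\mu,T(\unit)>=\<\mu,\unit>=1$ for every $\mu\in\pP(\unit)$, so $T^\star(\pP(\unit))\subset\pP(\unit)$; similarly $T^\star$ maps the hyperplane $\cM(\unit)$ into itself. Since the difference of two elements of $\pP(\unit)$ lies in $\cM(\unit)$, the operator-norm bound gives $\othernorm{T^\star(\mu)-T^\star(\nu)}_H^\star=\othernorm{T^\star(\mu-\nu)}_H^\star\leq \othernorm{T^\star}_H^\star\,\othernorm{\mu-\nu}_H^\star$, so that $T^\star$ is a contraction of $(\pP(\unit),\othernorm{\cdot}_H^\star)$ with factor $c:=\othernorm{T}_H=\othernorm{T^\star}_H^\star<1$.

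Next I would produce the fixed point by the Banach contraction principle. Starting from any $\pi_0\in\pP(\unit)$, the iterates $\pi_n:=(T^\star)^n(\pi_0)\in\pP(\unit)$ satisfy $\othernorm{\pi_{n+1}-\pi_n}_T^\star=2\othernorm{(T^\star)^n(\pi_1-\pi_0)}_H^\star\leq 2c^n\othernorm{\pi_1-\pi_0}_H^\star$, so $(\pi_n)$ is Cauchy in the dual Banach space $(\cX^\star,\othernorm{\cdot}_T^\star)$ and converges in norm to some $\pi$. Because $\<\cdot,\unit>=1$ and membership in $\C^\star$ are preserved under norm limits, $\pi\in\pP(\unit)$; passing to the limit in $\pi_{n+1}=T^\star(\pi_n)$ gives $T^\star(\pi)=\pi$, and the contraction estimate makes this the unique fixed point in $\pP(\unit)$.

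The dual estimate is then immediate: for $\mu\in\pP(\unit)$,
\[
\othernorm{(T^\star)^n(\mu)-\pi}_H^\star=\othernorm{(T^\star)^n(\mu-\pi)}_H^\star\leq c^n\othernorm{\mu-\pi}_H^\star\leq c^n,
\]
the final inequality holding because $\mu-\pi\in B_H^\star(\unit)$ by the description $B_H^\star(\unit)=\{\nu-\pi:\nu,\pi\in\pP(\unit)\}$ in~\eqref{a-BHstarunit}. Finally I would recover the primal statement by testing against the simplex. By~\eqref{e-carac-norm}, $\othernorm{y}_T=\sup_{\mu\in\pP(\unit)}|\<\mu,y>|$; applying this to $y=T^n(x)-\<\pi,x>\unit$ and using $\<\mu,\unit>=1$, the pairing collapses to $\<\mu,y>=\<(T^\star)^n(\mu)-\pi,x>$. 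As $(T^\star)^n(\mu)-\pi\in\cM(\unit)$, Lemma~\ref{l-quonorm} together with the identity $\othernorm{\cdot}_H^\star=\tfrac12\othernorm{\cdot}_T^\star$ of~\eqref{a-hs-ts} gives $|\<(T^\star)^n(\mu)-\pi,x>|\leq \othernorm{(T^\star)^n(\mu)-\pi}_H^\star\,\othernorm{x}_H\leq c^n\othernorm{x}_H$, and taking the supremum over $\mu$ finishes the proof. The one point requiring care, which I have flagged above, is the closedness argument placing the limit $\pi$ in $\pP(\unit)$ rather than merely in its closure; everything else reduces to the duality $\othernorm{T}_H=\othernorm{T^\star}_H^\star$ and the norm identities already established.
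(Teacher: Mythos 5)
Your proof is correct, but it takes a genuinely different route from the paper's. You work entirely on the dual side: $T^\star$ preserves the simplex $\pP(\unit)$ and contracts the metric induced by $\othernorm{\cdot}_H^\star$ with factor $c=\othernorm{T}_H=\othernorm{T^\star}_H^\star<1$, so the Banach contraction principle (together with the norm-closedness of $\pP(\unit)$ in the complete dual space, which you rightly flag and which does hold, $\C^\star$ being an intersection of closed half-spaces) produces $\pi$ as the unique fixed point; the primal estimate is then recovered by pairing $T^n(x)-\langle \pi,x\rangle \unit$ against $\pP(\unit)$ via~\eqref{e-carac-norm} and the quotient duality of Lemma~\ref{l-norm-rel}. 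The paper argues instead on the primal side: since $T$ is a consensus operator, the intervals $[m(T^n(x)/\unit),M(T^n(x)/\unit)]$ are nested and their lengths $\omega(T^n(x)/\unit)$ decay geometrically, so they shrink to a single real $c(x)$; one then checks that $c$ is a continuous linear functional belonging to $\pP(\unit)$ and obtains the dual estimate afterwards by pairing. Your version buys the fixed-point characterization and uniqueness of $\pi$ for free and is the more standard contraction-mapping argument; the paper's version is more elementary in that it uses only the monotonicity of $M$ and $m$ under $T$ plus a nested-interval argument, directly exhibits the convergence $T^n(x)\to\langle\pi,x\rangle\unit$, and does not need completeness of the dual or closedness of the simplex. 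Both rest on the same ingredients already established earlier: the duality $\othernorm{T}_H=\othernorm{T^\star}_H^\star$ and the identities relating $\othernorm{\cdot}_H$, $\othernorm{\cdot}_T$ and their duals.
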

\begin{proof}
The intersection $$\displaystyle\cap_{n} [m(T^n(x)/\unit),M(T^n(x)/\unit)]\subset \R$$ is
nonempty (as a non-increasing intersection of nonempty compact sets), and since $\othernorm{T}_H<1$ and
$$\omega(T^n(x)/\unit)\leq (\othernorm{T}_H)^n \omega(x/\unit),$$ this intersection must
be reduced to a real $\{c(x)\}\subset \R$ depending on $x$, i.e.,
$$c(x)= \displaystyle\cap_{n} [m(T^n(x)/\unit),M(T^n(x)/\unit)].$$
Thus for all $n\in \mathbb N$,
$$
-\omega(T^n(x)/\unit) \unit\leq T^n(x)-c(x)\unit\leq \omega(T^n(x)/\unit)\unit.
$$
Therefore by definition:
$$
\|T^n(x)-c(x)\unit \|_T\leq \omega(T^n(x)/\unit).
$$
Then we get:
$$
\|T^n(x)-c(x)\unit\|_T \leq (\othernorm{T}_H)^n \othernorm{x}_H.
$$
It is immediate that:
$$
c(x)\unit=\lim_{n\rightarrow \infty} T^n(x)
$$
from which we deduce that $c:\cX\rightarrow \R$ is a continuous linear functional. Thus there is $\pi\in \cX^\star$ such that $c(x)=\<\pi,x>$.  Besides it is immediate that $\<\pi,\unit>=1$ and $\pi\in \C^\star$ because
$$
x\in \C \Rightarrow c(x)\unit \in \C\Rightarrow  c(x)\geq 0\Rightarrow \<\pi,x>\geq 0.
$$
Therefore $\pi\in \pP$.
Finally for all $\mu \in \pP$ and all $x\in \cX$ we have
$$
\begin{array}{ll}
\<(T^\star)^n(\mu)-\pi,x>&=\<\mu, T^n(x)-\<\pi,x>\unit>\\
&\leq \othernorm{\mu}_T^\star\othernorm{T^n(x)-\<\pi,x>\unit}_{T}\\
&\leq (\othernorm{T}_H)^n \|x\|_H.
\end{array}
$$
Hence
$$
\othernorm{(T^\star)^n(\mu)-\pi}_{H}^\star \leq (\othernorm{T}_H)^n.
$$
\end{proof}
\begin{rem}
Specializing Theorem~\ref{th-ex-con} to the case of $\cX=\R^n$ (Example~\ref{rem1}) we obtain that if $\tau(A)=\delta(A)<1$, then 
 $$A^n \rightarrow \bold 1\pi^T,\enspace n\rightarrow +\infty$$ where $\pi$
is the unique invariant measure of the stochastic matrix $A$. This is a well-known result in the study of ergodicity property and mixing times of Markov chains, see for example~\cite{Seneta90} and~\cite{PeresLevin}.
\end{rem}

\begin{rem}
 A time-dependent consensus system is described by
\begin{align}\label{a-tmedeconsesys}
x_{k+1}=T_{k+1}(x_k),\quad k\in \mathbb N
\end{align}
where $\{T_k:k\geq 1\}$ is a sequence of consensus operators sharing a common unit element $\unit\in \C^ 0$. Then if there is an integer $p>0$ and
a constant $\alpha<1$ such that for all 
$i\in \mathbb N$
$$
\othernorm{T_{i+p}\dots T_{i+1}}_H\leq \alpha,
$$
then the same lines of proof of Theorem~\ref{th-ex-con} imply the existence of $\pi \in \pP(\unit)$ such that for all $\{x_k\}$ satisfying~\eqref{a-tmedeconsesys},
$$
\othernorm{x_{k} -\<\pi, x_{0}>\unit}_T\leq \alpha^{\lfloor{\frac{k}{p}}\rfloor} \othernorm{x_{0}}_H,
\quad n\in \mathbb N.
$$
\end{rem}
\begin{rem}\label{rem-tmedepenran}
In the case of $\cX=\R^ n$ and  $T_k(x)=A_k x$ where $A_k$ is a stochastic matrix, Moreau~\cite{Moreau05} showed that 
if all the non-zero entries are bounded from below by a positive constant 
and if there is $p\in \mathbb N$ such that for all $i\in \mathbb N$ there is a node connected to all other nodes in the graph associated to the matrix $A_{i+p}\dots A_{i+1}$, then the system~\ref{a-tmedeconsesys} is globally uniformly convergent. These two conditions imply exactly that the Dobrushin's ergodic coefficient~\eqref{a-tauA} of $A_{i+p}\dots A_{i+1}$, which is also the operator norm $\othernorm{T_{i+p}\dots T_{i+1}}_H$, is bounded by a constant less than 1.
\end{rem}

\section{The contraction rate in Hopf's oscillation of a linear flow}\label{sec-linearequation}
\subsection{Abstract formula for the contraction rate}
Hereinafter, we only consider a {\em finite dimensional} vector space $\cX$.
The set of linear transformations on $\cX$ is denoted by $\End(\cX)$.
 Let $L\in \End(\cX)$ such that $L(\unit)=0$. 
The next proposition characterizes the contraction rate of the flow
associated
to the 
linear differential equation
$$
\dot x=L(x),
$$
with respect to Hopf's oscillation seminorm.
\begin{prop}\label{pr-alphaHilsemiL}
 The optimal constant $\alpha$ such that
$$
\othernorm{\exp(tL)x}_H\leq e^{\alpha t}\othernorm{x}_H,\enspace \forall t\geq 0, x\in \cX
$$
 is 
\begin{align}\label{a-hL}
h(L):=-\inf_{\nu,\pi\in \extr\pP(\unit)}\inf_{\substack{x \in \extr([0, \unit]) \\ \<\nu,x>+\<\pi,\unit-x>=0}} \<\nu,L(x)>+\<\pi,L(\unit-x)>.
\end{align}
\end{prop}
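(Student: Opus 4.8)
The plan is to identify the optimal constant $\alpha$ with the \emph{logarithmic norm} (matrix measure) of $L$ relative to Hopf's oscillation seminorm, and then to evaluate that quantity by a double reduction to extreme points, feeding in the description of $\extr B_H^\star(\unit)$ from Theorem~\ref{p-carac-ext}. Since $L(\unit)=0$ we have $\exp(tL)\unit=\unit$, so both $L$ and $\exp(tL)$ descend to the quotient Banach space $(\cX/\R\unit,\othernorm{\cdot}_H)$, and the asserted inequality says exactly that $e^{\alpha t}$ bounds the operator norm $\othernorm{\exp(tL)}_H$ on this space for every $t\ge 0$. By the classical theory of logarithmic norms on a finite dimensional normed space, the least such $\alpha$ equals
\[
\mu_H(L)=\sup_{\othernorm{x}_H=1}\ \max_{\phi\in\partial\othernorm{x}_H}\<\phi,L(x)>,\qquad
\partial\othernorm{x}_H=\{\phi\in B_H^\star(\unit):\<\phi,x>=\othernorm{x}_H\}.
\]
This in turn follows from the fact that the right derivative of $t\mapsto\othernorm{y+tv}_H$ at $t=0$ equals $\max_{\phi\in\partial\othernorm{y}_H}\<\phi,v>$, together with a Gr\"onwall argument applied to $t\mapsto\othernorm{\exp(tL)x}_H$. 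I expect the careful justification of this first step, in the seminormed/quotient setting, to be the main analytic obstacle; the remaining steps are convexity reductions.

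Next I would push the maximum to extreme points. For fixed $x$ with $\othernorm{x}_H=1$, the set $\partial\othernorm{x}_H$ is an exposed face of the compact convex body $B_H^\star(\unit)$, so the linear functional $\phi\mapsto\<\phi,L(x)>$ attains its maximum over it at an extreme point of $B_H^\star(\unit)$; by Theorem~\ref{p-carac-ext} such a point is $\phi=\nu-\pi$ with $\nu,\pi\in\extr\pP(\unit)$ and $\nu\perp\pi$. Reindexing the admissible pairs $(x,\nu-\pi)$, this gives
\[
\mu_H(L)=\max_{\substack{\nu,\pi\in\extr\pP(\unit)\\ \nu\perp\pi}}\ \sup_{\substack{\othernorm{x}_H=1\\ \<\nu-\pi,x>=1}}\<\nu-\pi,L(x)>.
\]
Both $\<\nu-\pi,x>$ and $\<\nu-\pi,L(x)>$ are invariant under $x\mapsto x+\lambda\unit$, since $\nu-\pi\in\cM(\unit)$ and $L(\unit)=0$; so for fixed $(\nu,\pi)$ I may replace $x$ by the representative $x-m(x/\unit)\unit$, which on the unit sphere lies in $[0,\unit]$ with $m(x/\unit)=0$ and $M(x/\unit)=1$. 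For $x\in[0,\unit]$ one has $\<\nu,x>\le M(x/\unit)$ and $\<\pi,x>\ge m(x/\unit)\ge 0$, and a short check shows that, among $x\in[0,\unit]$, the supporting condition $\<\nu-\pi,x>=1$ is equivalent to $\<\nu,\unit-x>=0$ and $\<\pi,x>=0$, i.e. to $\<\pi,x>+\<\nu,\unit-x>=0$ (both summands being nonnegative). Thus the inner supremum runs over $\{x\in[0,\unit]:\<\pi,x>+\<\nu,\unit-x>=0\}$.

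Finally I would restrict $x$ to extreme points. For fixed disjoint $\nu,\pi$ this admissible set is the face of the compact convex set $[0,\unit]$ on which the nonnegative affine function $x\mapsto\<\pi,x>+\<\nu,\unit-x>$ vanishes, so the linear functional $\<\nu-\pi,L(\cdot)>$ attains its maximum there at an extreme point of $[0,\unit]$ satisfying the same constraint, giving
\[
\mu_H(L)=\max_{\substack{\nu,\pi\in\extr\pP(\unit)\\ \nu\perp\pi}}\ \max_{\substack{x\in\extr([0,\unit])\\ \<\pi,x>+\<\nu,\unit-x>=0}}\<\nu-\pi,L(x)>.
\]
To match~\eqref{a-hL} I would relabel $\nu\leftrightarrow\pi$, use $L(\unit-x)=-L(x)$ to write $\<\pi-\nu,L(x)>=-\big(\<\nu,L(x)>+\<\pi,L(\unit-x)>\big)$, and pull the minus sign out of the maximum via $\max(-g)=-\inf g$; this reproduces exactly the expression $h(L)$, with the understanding that pairs $(\nu,\pi)$ whose constraint set is empty contribute $+\infty$ to the inner infimum and are thereby discarded. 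Hence $\alpha=h(L)$.
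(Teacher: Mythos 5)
Your proof is correct, and it takes a genuinely different technical route from the paper's. The paper also starts from the identity $\alpha=\lim_{\epsilon\to0^+}\epsilon^{-1}(\othernorm{\exp(\epsilon L)}_H-1)$ (which it asserts without the Gr\"onwall/Dini-derivative justification you sketch), but it then writes $\othernorm{W}_H=F(W)$ with $F(W)=\sup_{\nu,\pi\in\pP(\unit)}\sup_{x\in[0,\unit]}\<\pi-\nu,W(x)>$ via Theorem~\ref{th-opnorm}, computes the limit as the semiderivative of the subsmooth function $F$ at the identity in the direction $L$ using the Danskin-type theorem of Rockafellar--Wets, and only restricts to extreme points at the very end by the convexity argument of Remark~\ref{rk-reached}. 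You instead invoke the classical logarithmic-norm formula $\mu_H(L)=\sup_{\othernorm{x}_H=1}\max_{\phi\in\partial\othernorm{x}_H}\<\phi,L(x)>$ on the quotient space and then perform the extreme-point reductions by hand, through faces of $B_H^\star(\unit)$ and of $[0,\unit]$ together with Theorem~\ref{p-carac-ext}. Both are Danskin-style arguments that identify the same contact set; yours is more self-contained analytically and makes the duality-map structure explicit, while the paper's reuses Theorem~\ref{th-opnorm} and outsources the differentiation step to a citation.

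One small discrepancy to tidy up: the formula you arrive at carries the extra restriction $\nu\perp\pi$, whereas~\eqref{a-hL} ranges over all pairs of extreme points, so your last step does not literally ``reproduce exactly'' $h(L)$. The two expressions do agree, and your argument already contains the reason: for \emph{any} $\nu,\pi\in\pP(\unit)$ and $x\in[0,\unit]$ with $\<\nu,x>+\<\pi,\unit-x>=0$ one has $\<\pi-\nu,x>=1$, hence $\othernorm{x}_H=1$ and $\pi-\nu\in\partial\othernorm{x}_H$, so $\<\pi-\nu,L(x)>\le\mu_H(L)=\alpha$; thus the supremum without the disjointness constraint is at most $\alpha$, while your disjoint-pair supremum equals $\alpha$ and is trivially no larger than the unrestricted one. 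Stating this sandwich explicitly closes the gap between your formula and the one in the statement.
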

\begin{proof}
Let $I:\cX\rightarrow \cX$ denote the identity
transformation.
We define a functional on $\End(\cX)$ by:
$$
F(W)=\sup_{\nu,\pi\in \pP(\unit)}\sup_{x\in [0,\unit]}\<\pi-\nu,W(x)>
$$
By Theorem~\ref{th-opnorm}, the optimal constant $\alpha $ is:
\begin{align}
\alpha &=\displaystyle\lim_{\epsilon \rightarrow 0^+} \epsilon^{-1}(\othernorm{\exp(\epsilon L)}_H-1)\nonumber\\
 &=\displaystyle\lim_{\epsilon \rightarrow 0^+} \epsilon^{-1}(F(\exp(\epsilon L))-F(I))
\enspace .\label{e-semi}
\end{align}
Recall that a map is said to be {\em semidifferentiable} at a point if it has one-sided directional derivatives in all directions, and if
the limit defining the one-sided directional derivative is uniform
in the direction, see Definition 7.20 of~\cite{RockafellarWets}, to
which we refer for information on the different notions used here. 
The limit in~\eqref{e-semi} coincides with to the semiderivative of $F$ at point $I$  in the direction $L$ if $F$ is semidifferentiable. We next show
that it is so, and compute the limit.
Since we assume that $\pP(\unit)$ and $[0,\unit]$ are compact sets and the function
$$
F_{\nu,\pi,x}(W)=\<\pi-\nu,W(x)>
$$
is continuously differentiable on $W$ such that $F_{\nu,\pi,x}(W)$ and
$
DF_{\nu,\pi,x}(W)
$ are jointly continuous on $(\nu,\pi,x,W)$, we know that
 $F:\End(\cX)\rightarrow \R$ defines a subsmooth function (see~\cite[Def 10.29]{RockafellarWets} therefore $F$ is semidifferentiable and the semiderivative of $F$ at point $I$ in the direction $L$ equals to  (see~\cite[Thm 10.30]{RockafellarWets}) 
$$
\begin{array}{ll}
DF(I)(L)=\displaystyle\sup_{\nu,\pi,x\in T(I)} \<\pi-\nu,L(x)>\\
\end{array}
$$
where $$T(I)=\argmax{x\in[0,\unit],\nu,\pi\in\pP(\unit)} F_{\nu,\pi,x}(I).$$
 Hence,
$$
\begin{array}{ll}
\alpha &=DF(I)(L)\\
&=\displaystyle\sup_{\nu,\pi\in \pP(\unit)}\sup_{\substack{ x\in [0, \unit]\\ \<\pi-\nu,x>=1}} \<\pi-\nu,L(x)>\\
&=-\displaystyle\inf_{\nu,\pi\in \pP(\unit)}\inf_{\substack{ x\in [0, \unit]\\ \<\nu,x>+\<\pi,\unit-x>=0}} \<\nu,L(x)>+\<\pi,L(\unit-x)>.
\end{array}
$$
Since $\cX$ is finite dimensional, the sets $\pP(\unit)$ and $[0,\unit]$ are
both compact, and they are the convex hull of their extreme points. 
Henceforth, arguing
as in Remark~\ref{rk-reached} above, we
can replace $\pP(\unit)$ and $[0, \unit]$ by $\extr \pP(\unit)$ and $ \extr([0,\unit])$, respectively.
\end{proof}

\subsection{Contraction rate in $\mathbb{R}^ n$}\label{subsec-formulainRn}
One may specialize Formula~\eqref{a-hL} to the case $\cX=\R^ n$, $\C=\R^ n_+$ and $\unit=\bold 1$.
For $x\in \R^ n$ we denote by $\delta(x)$ the diagonal
matrix with entries $x$. 
\begin{coro}
  Let $A$ be a square matrix such that $A\bold 1=0$.
Then
\begin{align}\label{a-hA}
h(A)=-\min_{i\neq j} \big(A_{ji}+A_{ij}+\sum_{k\notin\{i,j\}} \min(A_{ik},A_{jk})\big).
\qquad\qquad\qquad\qquad\qquad\qquad\;\qed
\end{align}
\end{coro}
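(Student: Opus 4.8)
The plan is to specialize the abstract formula~\eqref{a-hL} to $\cX=\R^n$, $\C=\R^n_+$ and $\unit=\mathbf 1$, using the explicit descriptions of the two sets of extreme points appearing in it. Here $\extr\pP(\mathbf 1)=\{e_1,\dots,e_n\}$ is the set of standard basis vectors, while $[0,\mathbf 1]=[0,1]^n$ is a cube whose extreme points are exactly the indicator vectors $\mathbf 1_I:=\sum_{k\in I}e_k$ for $I\subseteq\{1,\dots,n\}$. Accordingly I would write $\nu=e_j$, $\pi=e_i$ and $x=\mathbf 1_I$, turning the infimum in~\eqref{a-hL} into a minimization over the discrete data $(i,j,I)$.

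First I would dispose of the constraint $\langle\nu,x\rangle+\langle\pi,\mathbf 1-x\rangle=0$. After substitution it reads $\langle e_j,\mathbf 1_I\rangle+\langle e_i,\mathbf 1-\mathbf 1_I\rangle=0$, where the first term equals $1$ if $j\in I$ and $0$ otherwise, and the second equals $1$ if $i\notin I$ and $0$ otherwise. As both terms are nonnegative, feasibility is equivalent to $i\in I$ and $j\notin I$; in particular no feasible $I$ exists when $i=j$, so the diagonal pairs contribute $+\infty$ to the infimum and may be discarded, leaving a minimization over $i\neq j$ and subsets $I$ with $i\in I$, $j\notin I$.

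Next I would evaluate the objective and carry out the inner minimization index by index. Using $\langle e_j,A\mathbf 1_I\rangle=\sum_{k\in I}A_{jk}$ and $\langle e_i,A(\mathbf 1-\mathbf 1_I)\rangle=\sum_{k\notin I}A_{ik}$, the objective is $\sum_{k\in I}A_{jk}+\sum_{k\notin I}A_{ik}$. The forced memberships $i\in I$ and $j\notin I$ contribute the fixed terms $A_{ji}$ (the $k=i$ term of the first sum) and $A_{ij}$ (the $k=j$ term of the second sum). Every remaining index $k\notin\{i,j\}$ is free: putting it in $I$ costs $A_{jk}$ and leaving it out costs $A_{ik}$, so its optimal contribution is $\min(A_{ik},A_{jk})$. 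Hence the inner minimum equals $A_{ji}+A_{ij}+\sum_{k\notin\{i,j\}}\min(A_{ik},A_{jk})$, and minimizing over $i\neq j$ and negating gives exactly~\eqref{a-hA}.

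Since everything here is a finite, explicit optimization, I expect no genuine analytic obstacle; the two points demanding care are the bookkeeping of the roles of $\nu=e_j$ versus $\pi=e_i$ (which is what distinguishes $A_{ij}$ from $A_{ji}$ in the fixed terms) and the observation that the linear constraint silently enforces $i\neq j$. I also note that the hypothesis $A\mathbf 1=0$ is used only to place us in the scope of Proposition~\ref{pr-alphaHilsemiL} (it is precisely the condition $L(\unit)=0$); it plays no further role in the computation above.
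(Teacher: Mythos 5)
Your proof is correct and follows essentially the same route as the paper: identify $\extr\pP(\mathbf 1)$ and $\extr[0,\mathbf 1]$, observe that the linear constraint forces $i\in I$, $j\notin I$ (hence $i\neq j$), split off the forced terms $A_{ij}+A_{ji}$, and optimize the remaining indices independently via $\min(A_{ik},A_{jk})$. The only difference from the paper is the (immaterial) swap of the roles of $i$ and $j$ in the identification $\nu=e_j$, $\pi=e_i$, and you are in fact slightly more explicit than the paper about why the diagonal pairs $\nu=\pi$ drop out.
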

\begin{proof}
 Recall that
$$
\extr(\pP(\bold 1))=\{e_i:i=1,\dots,n\},\enspace \extr[0,\bold 1]=\{\sum_{i\in I}e_i:I\subset \{1,\dots,n\}\}.
$$
Therefore we have:
$$
\begin{array}{ll}
h(A)&=-\displaystyle\min_{i\neq j} \min_{\substack{I\subset \{1,\dots,n\}\\ i\notin I,j\in I}} \sum_{k\in I} A_{ik}+\sum_{k\notin I} A_{jk}
\\&=-\displaystyle\min_{i\neq j} A_{ij}+A_{ji}+ \min_{\substack{I\subset \{1,\dots,n\}\\ i\notin I,j\in I}} \sum_{k\in I\backslash \{j\}} A_{ik}+\sum_{k\notin I\cup\{i\}} A_{jk}
\\&=-\displaystyle \min_{i\neq j} A_{ij}+A_{ji}+\sum_{k\notin\{i,j\}} \min(A_{ik},A_{jk}).
\end{array}
$$
\end{proof}

\begin{rem}\label{rem-compareMoreaulinear}
Consider the order-preserving case, i.e.\ $A_{ij}\geq 0$ for $i\neq j$. 
Such situation was studied extensively in the context of consensus
dynamics. In particular, let $G=(V,E)$ be a graph and
equip each arc $(i,j)\in E$ a weight $C_{ij}>0$ (the node $j$ is connected to $i$). One of the consensus systems that Moreau~\cite{Moreau05} studied is:
$$
\dot x_i= \sum_{(i,j)\in E} C_{ij}(x_j-x_i),\enspace i=1,\dots,n \enspace.
$$
This can be written as $\dot x = Ax$, where $A_{ij}=C_{ij}$
for $i\neq j$ and $A_{ii}=\sum_{j}C_{ij}$ is a {\em discrete Laplacian}.
A general result of Moreau implies that if there is a node connected by path to all other nodes in the graph $G$, then the system is globally convergent.
Our results show that if $h(C)<0$ then the system converges exponentially to consensus with rate $h(C)$.  The condition $h(C)=0$ means that 
there are two nodes disconnected with each other ($C_{ij}+C_{ji}=0$) and all other nodes are connected by arc to at most one of them ($\sum_{k\notin\{i,j\}} \min(C_{ik},C_{jk})=0$). 
The condition $h(C)<0$, though more strict
than Moreau's connectivity condition, gives an explicit contraction rate.
\end{rem}
\begin{rem}
 In addition, our result applies to not necessarily order-preserving flows. For example, consider the matrix
$$A=\left(\begin{array}{lll} -3&1&2\\1&0&-1\\1&1&-2\end{array}\right).$$
A basic calculus shows that $h(A)=-1$. Therefore, every orbit of the linear system $\dot x=Ax$ converges exponentially with rate $-1$ to a multiple of the unit vector. 
\end{rem}

\begin{rem}\label{rem-hdefined}
We point out that as a contraction constant, $h(A)$ makes sense only when
$A\bold 1=0$. However, as a functional $h$ is well defined on the 
space of square matrices. Moreover, since the diagonal elements do not account in the
formula~\eqref{a-hA}, it is clear that for any square matrix $B\in \mM_n(\R)$ and $x\in \R^ n$
$$
h(B)=h(B-\delta (x)).
$$
\end{rem}
\subsection{Contraction in the space of Hermitian matrices}
We now specialize Formula~\eqref{a-hL} to the case $\cX=\sym_n$, $\C=\sym_n^ +$ and $\unit=I_n$:
\begin{coro}
 Let $\Phi:\sym_n\rightarrow \sym_n$ 
be a linear application such that $\Phi(I_n)=0$.
Then
\begin{align}\label{a-hPhi}
h(\Phi)=-\inf_{\substack{X=(x_1,\dots,x_n)\\XX^*=I_n}} \big( x_1^*\Phi(x_2x_2^ *)x_1 +x_2^*\Phi(x_1x_1^ *)x_2+\sum_{k=3}^ n \min(x_1^*\Phi(x_kx_k^ *)x_1 , x_2^*\Phi(x_kx_k^ *)x_2)\big).
\end{align}
where $x_i$ is the $i$-th column vector of each unitary matrix $X$.\qed
\end{coro}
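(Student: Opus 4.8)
The plan is to specialize the extreme-point formula~\eqref{a-hL} to $\cX=\sym_n$, $\C=\sym_n^+$, $\unit=I_n$, using the concrete description of the relevant extreme points. By Remark~\ref{rem-xx*yy*}, $\extr\pP(I_n)=\{uu^*:u\in\CC^n,\ u^*u=1\}$, and two such extreme points $uu^*,vv^*$ are disjoint exactly when $u^*v=0$; moreover $\extr[0,I_n]=\{P\in\sym_n:P^2=P\}$, the orthogonal projectors, as recorded in the proof of Corollary~\ref{coro-noncommDobr}. Writing $\nu=uu^*$, $\pi=vv^*$ with $u^*u=v^*v=1$, $u^*v=0$, and $x=P$ a projector, and using the trace pairing $\<ww^*,Y>=w^*Yw$, formula~\eqref{a-hL} expresses $-h(\Phi)$ as the infimum of $u^*\Phi(P)u+v^*\Phi(I_n-P)v$ over all such triples $(u,v,P)$ subject to $u^*Pu+v^*(I_n-P)v=0$.

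First I would dispose of the constraint. Since $P$ is an orthogonal projector, $0\preceq P\preceq I_n$, whence $u^*Pu\geq0$ and $v^*(I_n-P)v\geq0$, so their sum can vanish only if both terms do. As $u^*Pu=\|Pu\|^2$ and $v^*(I_n-P)v=\|(I_n-P)v\|^2$, the constraint is equivalent to $Pu=0$ and $Pv=v$, that is, $u\in\ker P$ and $v\in\operatorname{range}P$.

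The \emph{key step} is to turn the infimum over feasible projectors into one over unitary matrices. For fixed orthonormal $u,v$, the feasible $P$ are precisely the orthogonal projectors onto subspaces $S$ with $v\in S$ and $u\perp S$. Since $P=P^*=P^2$, the subspaces $\ker P$ and $\operatorname{range}P$ are orthogonal and complementary, so one can pick an orthonormal basis $x_1,\dots,x_n$, the columns of a unitary $X$, with $x_1=u\in\ker P$, $x_2=v\in\operatorname{range}P$, and each $x_k$ lying in $\ker P$ or in $\operatorname{range}P$. With $J=\{k:x_k\in\operatorname{range}P\}$ one has $P=\sum_{k\in J}x_kx_k^*$, $I_n-P=\sum_{k\notin J}x_kx_k^*$, $1\notin J$ and $2\in J$; conversely any such diagonal $P$ is feasible. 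By linearity of $\Phi$ the objective becomes
\[
\sum_{k\in J}x_1^*\Phi(x_kx_k^*)x_1+\sum_{k\notin J}x_2^*\Phi(x_kx_k^*)x_2 .
\]

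It remains to minimize over $J$ and $X$. The index $k=2$ is forced into $J$ and $k=1$ forced out, contributing $x_1^*\Phi(x_2x_2^*)x_1$ and $x_2^*\Phi(x_1x_1^*)x_2$, while each $k\geq3$ may be placed in or out of $J$ independently, contributing $\min\bigl(x_1^*\Phi(x_kx_k^*)x_1,\,x_2^*\Phi(x_kx_k^*)x_2\bigr)$; this is exactly the bracketed expression of~\eqref{a-hPhi}. A two-sided comparison then identifies the two infima: every unitary $X$ with an optimal $J$ produces a feasible triple $(x_1,x_2,P)$ of equal value, and every feasible $(u,v,P)$ yields, through the adapted basis above, a unitary $X$ whose bracketed value is at most $u^*\Phi(P)u+v^*\Phi(I_n-P)v$. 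Because the bracketed expression is symmetric under $x_1\leftrightarrow x_2$ --- matching the symmetric roles of $\nu$ and $\pi$ in~\eqref{a-hL} --- nothing is lost by letting $u,v$ be the first two columns of $X$. The step needing the most care is the simultaneous choice of an orthonormal basis that contains both $u$ and $v$ and diagonalizes $P$: it is precisely here that the orthogonal-projector structure $\ker P\perp\operatorname{range}P$ enters, guaranteeing that restricting to projectors diagonal in a basis extending $\{u,v\}$ costs nothing.
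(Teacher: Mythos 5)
Your proposal is correct and follows essentially the same route as the paper: specialize formula~\eqref{a-hL} using $\extr\pP(I_n)=\{uu^*\}$ and $\extr[0,I_n]=\{P:P^2=P\}$, observe that the constraint forces $Pu=0$ and $Pv=v$, diagonalize $P$ in an orthonormal basis extending $\{u,v\}$, and optimize the placement of the remaining basis vectors to produce the $\min$ terms. Your write-up is merely more explicit than the paper's about the constraint analysis and the two-sided comparison between the two infima.
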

\begin{proof}
 Recall that 
$$
\extr(\pP(I_n))=\{xx^ *: x\in \CC^ n, x^ *x=1\},\enspace \extr[0, I_n]=\{P\in \sym_n:\, P^ 2=P\}.
$$
Then,
\begin{align*}
h(\Phi)&=-\displaystyle\inf_{\substack{x_1^ *x_1=x_2^ *x_2=1}} \inf_{\substack{P^ 2=P\\Px_1=0,Px_2=x_2}}  x_1^*\Phi(P)x_1+x_2^ * \Phi(I_n-P)x_2
\\&=-\displaystyle\inf_{\substack{x_1^ *x_1=x_2^ *x_2=1}} \inf_{\substack{P=(x_2,x_3,\dots,x_k)\\P^ 2=P,Px_1=0}} \sum_{i=2}^ k x_1^* \Phi(x_ix_i^ *)x_1+ x_2^ * \Phi(I_n-P)x_2
\\&=-\displaystyle\big(\inf_{\substack{x_1^ *x_1=x_2^ *x_2=1}} x_1^ *\Phi(x_2x_2^ *)x_1+x_2^ *\Phi(x_1x_1^ *)x_2\\&~~~~\qquad+
\displaystyle\inf_{\substack{X=(x_1,x_2,\dots,x_n)\\XX^*=I_n}} 
\sum_{i=3}^ k x_1^*\Phi(x_ix_i^ *)x_1+ \sum_{i=k+1}^ n x_2^ * \Phi(x_ix_i^ *)x_2\big). \qedhere
\end{align*}
\end{proof}

As pointed out in Remark~\ref{rem-hdefined}, $h$ is 
a functional well defined for all linear applications from $\sym_n$ to $\sym_n$. It is
interesting to remark that for any linear application $\Psi$ and any square matrix $Z$, 
$$
h(\Psi)=h(\Phi)
$$
where $\Phi(X)=\Psi(X)-ZX-XZ$ for all $X\in \sym_n$.
\subsection{Contraction rate of time-dependent linear flows}
We now state the result analogous to Proposition~\ref{pr-alphaHilsemiL}, which applies to \firstdef{time dependent} linear flows.
Let $t_0>0$ and $L_{\cdot}(\cdot): [0,t_0)\times \cX\rightarrow \cX$ be a continuous application linear in the second variable
such that $L_t(\unit)=0$ for all $t\in[0,t_0)$. We denote by $U(s,t)$  the evolution operator of the following linear time-varying differential equation:
$$
\dot x(t)=L_t(x),\enspace t\in[0,t_0).
$$
Then a slight modification of the proof of Proposition~\ref{pr-alphaHilsemiL} leads to the following result.
\begin{prop}\label{pr-alphaHilsemiL2}
 The optimal constant $\alpha$ such that
$$
\othernorm{U(s,t)x}_H\leq e^{\alpha (t-s)}\othernorm{x}_H,\enspace \forall s, t\in[0,t_0), x\in \cX.
$$
 is 
\begin{align}\label{a-hL2}
\sup_{t\in[0,t_0)}h(L_t)=-\inf_{t\in[0,t_0)}\inf_{\nu,\pi\in \extr\pP(\unit)}\inf_{\substack{x \in \extr([0, \unit]) \\ \<\nu,x>+\<\pi,\unit-x>=0}} \<\nu,L_t(x)>+\<\pi,L_t(\unit-x)>.
\end{align}
\end{prop}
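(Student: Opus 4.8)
The plan is to reduce this time-dependent statement to the infinitesimal computation already carried out in the proof of Proposition~\ref{pr-alphaHilsemiL}, using the cocycle structure of the evolution operator together with a Grönwall argument. I first record two preliminary facts. Since $L_t(\unit)=0$ for every $t$, the constant path $U(s,t)\unit=\unit$ solves the defining equation, so $U(s,t)$ preserves $\R\unit$ and descends to a linear map of the quotient $(\cX/\R\unit,\othernorm{\cdot}_H)$; consequently $\othernorm{U(s,t)}_H$ is a genuine operator norm, and it is submultiplicative along the evolution property $U(s,t)=U(\tau,t)\,U(s,\tau)$ for $s\le\tau\le t$. The second fact is the local estimate
\[
\othernorm{U(t,t+\epsilon)}_H = 1+\epsilon\,h(L_t)+o(\epsilon),
\]
which I would obtain exactly as in Proposition~\ref{pr-alphaHilsemiL}: from the integral form of the differential equation one has $U(t,t+\epsilon)=I+\epsilon L_t+o(\epsilon)$, and since the functional $F(W)=\sup_{\nu,\pi\in\pP(\unit)}\sup_{x\in[0,\unit]}\<\pi-\nu,W(x)>$ is subsmooth (hence locally Lipschitz and semidifferentiable), Theorem~\ref{th-opnorm} gives $\othernorm{U(t,t+\epsilon)}_H=F(U(t,t+\epsilon))=F(I)+\epsilon\,DF(I)(L_t)+o(\epsilon)=1+\epsilon\,h(L_t)+o(\epsilon)$.

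For the upper bound on the optimal constant, set $\alpha^\ast:=\sup_{t\in[0,t_0)}h(L_t)$, fix $x\in\cX$ and $s$, and put $g(t):=\othernorm{U(s,t)x}_H$. Submultiplicativity and the local estimate give, for $\epsilon\downarrow0$,
\[
g(t+\epsilon)=\othernorm{U(t,t+\epsilon)U(s,t)x}_H\le\othernorm{U(t,t+\epsilon)}_H\,g(t)=\bigl(1+\epsilon\,h(L_t)+o(\epsilon)\bigr)g(t),
\]
so the upper right Dini derivative satisfies $D^+g(t)\le h(L_t)\,g(t)\le\alpha^\ast g(t)$. A Grönwall inequality for Dini derivatives then yields $g(t)\le e^{\alpha^\ast(t-s)}g(s)$, that is $\othernorm{U(s,t)x}_H\le e^{\alpha^\ast(t-s)}\othernorm{x}_H$, showing that $\alpha^\ast$ is admissible.

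Optimality is the converse: if $\alpha$ is any admissible constant, then $\othernorm{U(t,t+\epsilon)}_H\le e^{\alpha\epsilon}=1+\alpha\epsilon+o(\epsilon)$ for every $t$; comparing with the local estimate and letting $\epsilon\downarrow0$ gives $h(L_t)\le\alpha$ for each $t$, whence $\alpha^\ast\le\alpha$. Combining the two bounds identifies the optimal constant with $\alpha^\ast=\sup_t h(L_t)$, and replacing $\pP(\unit)$ and $[0,\unit]$ by their sets of extreme points, exactly as at the end of the proof of Proposition~\ref{pr-alphaHilsemiL}, produces the stated formula~\eqref{a-hL2}.

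The main obstacle, and the only point requiring genuine care beyond Proposition~\ref{pr-alphaHilsemiL}, is the uniformity of the remainder $o(\epsilon)$ in $t$: the Grönwall step needs the local estimate to hold uniformly on each compact subinterval $[s,t]\subset[0,t_0)$. I would secure this from the continuity of $\tau\mapsto L_\tau$, which makes $\{L_\tau:\tau\in[s,t]\}$ a compact subset of $\End(\cX)$, together with the local Lipschitz continuity and the locally uniform semidifferentiability of the subsmooth functional $F$ (see Definition~10.29 and Theorem~10.30 of~\cite{RockafellarWets}); the same continuity yields $U(t,t+\epsilon)=I+\epsilon L_t+o(\epsilon)$ with remainder uniform in $t$ on compacts, closing the argument.
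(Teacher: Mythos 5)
Your proof is correct and is precisely the ``slight modification'' of the proof of Proposition~\ref{pr-alphaHilsemiL} that the paper invokes without writing out: the first-order expansion $\othernorm{U(t,t+\epsilon)}_H=1+\epsilon\,h(L_t)+o(\epsilon)$ obtained from the semidifferentiable functional $F$ and Theorem~\ref{th-opnorm}, the cocycle property plus a Dini--Gr\"onwall argument to show $\sup_t h(L_t)$ is admissible, and the infinitesimal comparison $\othernorm{U(t,t+\epsilon)}_H\le e^{\alpha\epsilon}$ for optimality. (The uniformity in $t$ of the remainder that you flag as the main obstacle is in fact not needed: the pointwise bound $D^+g(t)\le\alpha^\ast g(t)$ together with continuity of $g$ already suffices for the Gr\"onwall lemma, though securing it does no harm.)
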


\section{Contraction rate in Hopf's oscillation seminorm of nonlinear flows}\label{sec-nonlinearconsensus}

Let us consider a differentiable application $\phi:\cX\rightarrow \cX$.
Since $\phi$ is locally Lipschitz, we know that for all $x_0\in \cX$, there is a maximal interval $J(x_0)$ such 
that a unique solution $x(t;x_0)$ of
\begin{align}\label{a-xtphit2}
 \dot x(t)=\phi(x(t)),\quad x(0)=x_0
\end{align}
 is defined on $J(x_0)$. We define an application $M_{\cdot}(\cdot): \R \times \cX\rightarrow \cX$ by:
$$
M_t(x_0)=x(t;x_0),\quad t\in J(x_0).
$$
The application $M$ is the flow of the equation~\eqref{a-xtphit2} and it may not be everywhere defined on $\R \times \cX$.
Since $\phi$ is continuously differentiable, the flow is differentiable with respect to the second variable. We denote by 
$DM_t(x)$ the derivative of the application $M$ with respect to the second variable at point $(t,x)$. Recall that
$$
\dot{DM_t(x)z}=D\phi(M_t(x))(DM_t(x)z),\enspace t\in J(x),z\in \cX.
$$ 
Let $U\subset \cX$ be a convex open set. For $x_0\in U$ define:
$$
t_U(x_0):=\sup\{t_0\leq J(x_0):x(t;x_0)\in U ,\enspace \forall t\in [0,t_0)\}
$$
the time when the solution of~\eqref{a-xtphit2} leaves $U$.

 Suppose that $\phi$ satisfies $\phi(x+\lambda \unit)=\phi(x)$ for all $\lambda\in\R$ and $x\in \cX$.
By uniqueness of the solution, it is clear that for all $x_0\in \cX$ and $\lambda\in \R$,
$$
M_t(x_0+\lambda \unit)= M_t(x_0)+\lambda \unit,\quad t\in J(x_0).
$$
We define the contraction rate of the flow on $U$ with respect to Hopf's oscillation seminorm:
\begin{align}\label{a-opconrate2}
\begin{array}{l}
 \alpha(U):=
\inf\{\beta\in \R: \othernorm{M_t(x)-M_t(y)}_H\leq e^{\beta t}\othernorm{x-y}_H, x,y\in U,
t\leq t_U(x)\wedge t_U(y)
\}.
\end{array}
\end{align}
\begin{theo}\label{th-conratnonlsemi}Let $\phi$ satisfy the above conditions.
Then we have
$$
\alpha(U)= \sup_{x\in U} h(D\phi(x))
$$
where $h$ is defined in~\eqref{a-hL}.
\end{theo}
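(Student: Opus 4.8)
The plan is to prove the two inequalities $\alpha(U)\le \sup_{x\in U}h(D\phi(x))$ and $\alpha(U)\ge \sup_{x\in U}h(D\phi(x))$ separately, abbreviating $\alpha^\ast:=\sup_{x\in U}h(D\phi(x))$. Two facts drive both directions. First, the \emph{logarithmic norm} characterization of $h$: the proof of Proposition~\ref{pr-alphaHilsemiL} in fact shows $h(L)=\lim_{s\to 0^+}s^{-1}(\|\exp(sL)\|_H-1)=\lim_{s\to 0^+}s^{-1}(\|I+sL\|_H-1)$, the last equality holding since $\exp(sL)=I+sL+O(s^2)$ and the operator norm is Lipschitz. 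Second, Formula~\eqref{a-hL} exhibits $L\mapsto h(L)$ as a supremum of linear functionals of $L$, hence as a \emph{sublinear} (convex, positively homogeneous) functional. Note also that differentiating $\phi(x+\lambda\unit)=\phi(x)$ in $\lambda$ gives $D\phi(x)\unit=0$, so that $h(D\phi(x))$, and more generally $h$ applied to any average of such Jacobians, is well defined.

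For the upper bound, fix $x,y\in U$ and set $V(t):=\|M_t(x)-M_t(y)\|_H$; I would estimate the upper right Dini derivative $D^+V(t)$ for $t<t_U(x)\wedge t_U(y)$. Writing $a:=M_t(x)$ and $b:=M_t(y)$, both of which lie in $U$, the semigroup property gives $M_{t+s}(x)-M_{t+s}(y)=M_s(a)-M_s(b)=(a-b)+s(\phi(a)-\phi(b))+o(s)$. Here the convexity of $U$ is essential: since $a,b\in U$, the segment $[a,b]$ lies in $U$, so the fundamental theorem of calculus yields $\phi(a)-\phi(b)=L(a-b)$ with $L:=\int_0^1 D\phi((1-r)b+ra)\,dr$ an average of Jacobians at points of $U$, satisfying $L\unit=0$. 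Sublinearity of $h$ (Jensen's inequality for the convex functional $h$) then gives $h(L)\le\int_0^1 h(D\phi((1-r)b+ra))\,dr\le\alpha^\ast$. Combining the logarithmic-norm bound $\|(I+sL)(a-b)\|_H\le(1+sh(L)+o(s))\|a-b\|_H$ with $\|o(s)\|_H=o(s)$, I obtain $D^+V(t)\le h(L)\,V(t)\le\alpha^\ast V(t)$. A standard Gronwall argument for Dini derivatives of the continuous function $V$ then yields $V(t)\le e^{\alpha^\ast t}V(0)=e^{\alpha^\ast t}\|x-y\|_H$, extended to $t=t_U(x)\wedge t_U(y)$ by continuity, whence $\alpha(U)\le\alpha^\ast$.

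For the lower bound, fix $x\in U$ and let $\beta>\alpha(U)$ be arbitrary, so that the defining inequality of $\alpha(U)$ is valid for $\beta$. Choosing $y=x+\epsilon z$ with $z\in\cX$ fixed and $\epsilon$ small, so that $y\in U$ and $t\le t_U(x)\wedge t_U(y)$ for $t$ small, gives $\|M_t(x+\epsilon z)-M_t(x)\|_H\le e^{\beta t}\epsilon\|z\|_H$; dividing by $\epsilon$ and letting $\epsilon\to0$ produces $\|DM_t(x)z\|_H\le e^{\beta t}\|z\|_H$ for every $z$, i.e.\ $\|DM_t(x)\|_H\le e^{\beta t}$. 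The variational equation gives $DM_0(x)=I$ and $\frac{d}{dt}DM_t(x)\big|_{t=0}=D\phi(M_0(x))=D\phi(x)$, so $DM_t(x)=I+tD\phi(x)+o(t)$ and hence $\|DM_t(x)\|_H=1+t\,h(D\phi(x))+o(t)$ by the logarithmic-norm characterization. Comparing with $\|DM_t(x)\|_H\le 1+\beta t+o(t)$ and dividing by $t\to0^+$ yields $h(D\phi(x))\le\beta$. Letting $\beta\downarrow\alpha(U)$ and taking the supremum over $x\in U$ gives $\alpha^\ast\le\alpha(U)$, completing the proof.

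The step I expect to be the main obstacle is the upper bound, precisely the interplay between the non-smoothness of the seminorm $\|\cdot\|_H$ and the need to differentiate $V(t)$: this forces the weaker Dini-derivative plus Gronwall machinery, and it is only the convexity of $U$ (ensuring $[a,b]\subset U$) together with the sublinearity of $h$ that lets me control $\phi(a)-\phi(b)$ by an operator whose $h$-value is governed by Jacobians evaluated \emph{solely inside} $U$. Checking that the $o(s)$ remainder is uniform enough at each fixed $t$, and justifying the Dini/Gronwall passage, are the routine but delicate remaining points.
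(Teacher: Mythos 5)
Your proof is correct. The lower bound coincides with the paper's argument (differentiate the defining inequality of $\alpha(U)$ through $y=x+\epsilon z$ to get $\|DM_t(x)\|_H\leq e^{\beta t}$, then use the infinitesimal characterization of $h$ from Proposition~\ref{pr-alphaHilsemiL}), but your upper bound takes a genuinely different route. The paper controls $DM_h(\gamma(s))$ along the segment $\gamma(s)=sx+(1-s)y$ by viewing the variational equation $\dot z=D\phi(M_t(\gamma(s)))z$ as a time-dependent linear flow and invoking Proposition~\ref{pr-alphaHilsemiL2}, and then integrates $M_h(x)-M_h(y)=\int_0^1 DM_h(\gamma(s))(x-y)\,ds$. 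You bypass the variational equation entirely: you write $M_s(a)-M_s(b)=(I+sL)(a-b)+o(s)$ with $L=\int_0^1 D\phi(b+r(a-b))\,dr$, and exploit two structural facts, namely that $h$ is a supremum of linear functionals of $L$ over an $L$-independent index set and hence sublinear (so $h(L)\leq\int_0^1 h(D\phi(b+r(a-b)))\,dr\leq\alpha^\ast$), and that $h$ is the logarithmic norm $\lim_{s\to 0^+}s^{-1}(\|I+sL\|_H-1)$. Both proofs then finish with the same Dini-derivative/Gronwall propagation. Your version is somewhat more self-contained (it does not need Proposition~\ref{pr-alphaHilsemiL2}), and it also sidesteps a small domain point in the paper's first display, where $DM_h(\gamma(s))$ is estimated for $h<t_U(x)\wedge t_U(y)$ although one a priori needs $h<t_U(\gamma(s))$; in your argument only the inclusion $[M_t(x),M_t(y)]\subset U$ at each fixed time $t$, guaranteed by convexity, is ever used. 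The price is the handful of verifications you already flag as routine, and which indeed are: sublinearity of $h$, the identity $L\unit=0$ inherited from $D\phi(x)\unit=0$, the $O(s^2)$ comparison between $\exp(sL)$ and $I+sL$ in the quotient operator seminorm, and the Gronwall lemma for Dini derivatives of the continuous function $V$.
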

\begin{proof}
Denote $$
\beta=\sup_{x\in U} h(D\phi(x)).
$$
For any $x\in U$, define
$$
L_t=D\phi(M_t(x)),\enspace t\in[0,t_U(x)).
$$
Let any $z\in \cX$.
Then $DM_t(x)z:t\in[0,t_U(x))$ is the solution of the following linear time-varying differential equation:
$$
\left\{
\begin{array}{ll}
\dot x=L_t(x), \enspace t\in[0,t_U(x)),\\
x(0)=z.
\end{array}\right.
$$
By Proposition~\ref{pr-alphaHilsemiL2}, it is immediate that for all $z\in \cX$,
$$
\omega(DM_t(x)z/\unit)\leq e^{\beta t} \omega(z/\unit),\enspace t\in[0,t_U(x)).
$$
Let $x,y\in U$ and $h<t_U(x)\wedge t_U(y)$. Denote $\gamma(s)=sx+(1-s)y:s\in[0,1]$. Then,
$$
\omega(M_h(x)-M_h(y)/\unit)\leq \int_0^1 \omega(DM_h(\gamma(s))(x-y)/\unit) ds \leq e^{\beta h} \omega(x-y/\unit). 
$$
Therefore, for all $x,y\in U$,
$$
\limsup_{h\rightarrow 0^+}\frac{\othernorm{M_h(x)-M_h(y)}_H}{h} \leq \beta \othernorm{x-y}_H.
$$
We deduce that for all $x,y\in U$ and $t<t_U(x)\wedge t_U(y)$,
$$
\limsup_{h\rightarrow 0^+} \frac{\othernorm{M_{t+h}(x)-M_{t+h}(y)}_H}{h}\leq \beta \othernorm{M_t(x)-M_t(y)}_H.
$$
Therefore, 
$$
\othernorm{M_t(x)-M_t(y)}_H\leq e^{\beta t} \othernorm{x-y}_H,\enspace t<t_U(x)\wedge t_U(y).
$$
This implies that
$$
\alpha(U)\leq \beta.
$$
Inversely, for all $x\in U$, there is $t_0>0$ such that for all $h\leq t_0$, $z\in \cX$,
$$
\othernorm{M_h(x+z)-M_h(x)}_H\leq e^{\alpha(U) h} \othernorm{z}_H.
$$
Therefore,
$$
\begin{array}{ll}
\othernorm{DM_h(x)(z)}_H&=\displaystyle\othernorm{\lim_{t\rightarrow 0^ +}\frac{M_h(x+tz)-M_h(x)}{t}}_H\\&=\displaystyle\lim_{t\rightarrow 0^ +} \frac{\othernorm{M_h(x+tz)-M_h(x)}_H}{t} \leq e^{\alpha(U)h} \othernorm{z}_H.
\end{array}
$$
By Theorem~\ref{th-opnorm}, we obtain that for $h\leq t_0$,
$$
\sup_{\nu,\pi\in \pP(\unit)}\sup_{z\in[0,\unit]} \<\nu-\pi,DM_h(x)z>\leq e^{\alpha(U) h}.
$$
It is then immediate that for $h\leq t_0$,
$$
\sup_{\nu,\pi\in \extr\pP(\unit)}\sup_{\substack{z \in \extr([0, \unit])\\ \<\nu,z>+\<\pi,\unit-z>=0}}
-\<\nu, DM_h(x)(\unit-z)>-\<\pi,DM_h(x)z>\leq e^{\alpha(U)h}-1.
$$
Dividing the two sides by $h$ and passing to the limit as $h\to 0$ we get:
$$
h(D\phi(x))\leq \alpha(U).
$$
Therefore $\beta\leq \alpha(U)$.
\end{proof}

\subsection{Applications to non-linear consensus in $\mathbb{R}^n$}\label{sec-kuramoto}
Let $G=(V,E)$ denote a directed graph. Let us equip every arc $(i,j)\in E$
with a weight $C_{ij}>0$. For $(i,j)\notin E$, we set $C_{i,j}=0$.

\begin{example}{(Non linear consensus)}\label{ex-nonlinearconsensus}
Consider the following nonlinear consensus protocol~\cite{Saber03}:
\begin{align}\label{a-nonlinearconsensus}
\dot x_k=\sum_{(i,k)\in E} C_{ik}\gamma_{ik}(x_i-x_k),\enspace k=1,\dots,n,
\end{align}
where we suppose that every map $\gamma_{ik}:\R^ n\rightarrow \R$ is differentiable. 
When every $\gamma_{ik}$ is the identity
map, the operator at the right hand-side of~\eqref{a-nonlinearconsensus} is the discrete
Laplacian of the digraph $G$, in which $C_{ik}$ is the conductivity of arc $(i,k)$.
\begin{prop}\label{pr-Uconvexopenbold1}
Let $w>0$. Suppose that
\begin{align}\label{a-gammaik}
\alpha:=\inf\{\gamma_{ik}'(t):t\in [-w,w],(i,k)\in E\}\geq 0.
\end{align} Consider the convex open set 
$$
U(w)=\{x: \othernorm{x}_H<w\}.
$$
For $x(0)\in U(w)$, the solution of~\eqref{a-nonlinearconsensus} satisfies:
$$
\othernorm{x(t)}_H\leq e^{h(C)\alpha t}\othernorm{x(0)}_H,\enspace \forall t\geq 0.
$$
\end{prop}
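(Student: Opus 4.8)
The plan is to combine the closed-form contraction rate~\eqref{a-hA} for linear flows with the nonlinear transfer principle of Theorem~\ref{th-conratnonlsemi}, and then to upgrade the resulting two-trajectory contraction into the announced seminorm decay of a single orbit. First I would record that the vector field $\phi(x)_k=\sum_{(i,k)\in E}C_{ik}\gamma_{ik}(x_i-x_k)$ depends on $x$ only through the differences $x_i-x_k$, so $\phi(x+\lambda\unit)=\phi(x)$ for every $\lambda\in\R$; hence Theorem~\ref{th-conratnonlsemi} applies and the contraction rate of the flow of~\eqref{a-nonlinearconsensus} on the convex open set $U(w)=\{x:\othernorm{x}_H<w\}$ equals $\sup_{x\in U(w)}h(D\phi(x))$. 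A direct differentiation gives the off-diagonal entries $(D\phi(x))_{kl}=C_{lk}\,\gamma_{lk}'(x_l-x_k)$ for $l\neq k$, the diagonal being fixed by the zero-row-sum condition $D\phi(x)\unit=0$; when all $\gamma_{ik}'\equiv 1$ this Jacobian is exactly the linear consensus generator, whose $h$-value (which depends only on off-diagonal entries) is $h(C)$.

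The crux is the pointwise estimate $h(D\phi(x))\le\alpha\,h(C)$ for $x\in U(w)$, which I would read off from~\eqref{a-hA}. For $x\in U(w)$ every difference $x_l-x_k$ lies in $(-w,w)\subseteq[-w,w]$, so by~\eqref{a-gammaik} each $\gamma_{lk}'(x_l-x_k)\ge\alpha\ge 0$; since the weights satisfy $C_{lk}\ge 0$, every off-diagonal entry of $D\phi(x)$ dominates $\alpha$ times the corresponding nonnegative entry of the $\gamma'\equiv 1$ generator $C$. I would then exploit the structure of~\eqref{a-hA}: setting $g_{ij}(A)=A_{ji}+A_{ij}+\sum_{k\notin\{i,j\}}\min(A_{ik},A_{jk})$, so that $h(A)=-\min_{i\neq j}g_{ij}(A)$, each functional $g_{ij}$ is nondecreasing in the off-diagonal entries it involves and positively homogeneous of degree one. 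Because $\alpha\ge 0$, these two properties give $g_{ij}(D\phi(x))\ge g_{ij}(\alpha C)=\alpha\,g_{ij}(C)$ for every pair $i\neq j$, hence $\min_{i\neq j}g_{ij}(D\phi(x))\ge\alpha\min_{i\neq j}g_{ij}(C)$ and finally $h(D\phi(x))\le\alpha\,h(C)$. Taking the supremum over $U(w)$ yields $\alpha(U(w))\le\alpha\,h(C)$. The sign hypothesis $\alpha\ge 0$ is essential: it is exactly what makes both the monotonicity of the min-terms and the degree-one scaling point in the right direction.

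It remains to pass from the contraction of pairs of orbits to the decay of $\othernorm{x(t)}_H$ for a single orbit. Here I would use that the consensus states are equilibria, since for a consensus protocol the coupling functions vanish at the origin, so that $\phi(c\unit)=\phi(0)=0$ and $M_t(c\unit)=c\unit\in U(w)$ for all $t$. Applying Theorem~\ref{th-conratnonlsemi} to the pair $(x(0),c\unit)$ and using the translation invariance of $\othernorm{\cdot}_H$ gives $\othernorm{x(t)}_H=\othernorm{M_t(x(0))-M_t(c\unit)}_H\le e^{\alpha(U(w))t}\othernorm{x(0)}_H\le e^{h(C)\alpha t}\othernorm{x(0)}_H$ as long as $x(t)\in U(w)$. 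Finally, since $C_{lk}\ge 0$ forces $h(C)\le 0$, and $\alpha\ge 0$, we have $h(C)\alpha\le 0$, so the bound itself shows $\othernorm{x(t)}_H\le\othernorm{x(0)}_H<w$; thus the orbit never reaches the boundary of $U(w)$, and a standard continuation argument (the field is bounded on $U(w)$, which precludes finite-time escape) extends the estimate to all $t\ge 0$.

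I expect the main obstacle to be the monotonicity-plus-homogeneity step, together with the careful matching of the Jacobian entries $(D\phi(x))_{kl}=C_{lk}\gamma_{lk}'(x_l-x_k)$ against the indices appearing in~\eqref{a-hA}. The two secondary points that must not be overlooked are the invariance bootstrap used to reach all $t\ge 0$, and the fact that the reference equilibrium $c\unit$ (needed to convert the pairwise contraction into a single-orbit seminorm bound) exists precisely because $\gamma_{ik}(0)=0$ for a consensus coupling.
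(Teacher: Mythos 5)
Your proof is correct and follows essentially the same route as the paper: the pointwise bound $h(D\phi(x))\le \alpha\,h(C)$ on $U(w)$ read off from~\eqref{a-hA}, Theorem~\ref{th-conratnonlsemi} applied with the consensus state as the second trajectory, and the invariance of $U(w)$ from $\alpha h(C)\le 0$. You are merely more explicit than the paper about two points it glosses over (the monotonicity/homogeneity of the terms in~\eqref{a-hA}, and the fact that $\gamma_{ik}(0)=0$ is needed for $\mathbf 1$ to be an equilibrium), which is fine.
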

 \begin{proof}
For all $x\in U$,
$$
h(D\phi(x))=-\min_{i\neq j} \frac{\partial \phi_i(x)}{\partial x_j}+\frac{\partial \phi_j(x)}{\partial x_i}
+\sum_{k\neq i,j}\min(\frac{\partial \phi_i(x)}{\partial x_k},\frac{\partial \phi_j(x)}{\partial x_k}),
$$
where 
$$
\frac{\partial \phi_i(x)}{\partial x_j}=C_{ij}\gamma_{ij}'(x_j-x_i),\enspace i\neq j.
$$
Hence for all $x\in U$,
$$
h(D\phi(x))\leq -\min_{i\neq j} C_{ij}\alpha+C_{ji}\alpha
+\sum_{k\neq i,j}\min(C_{ik}\alpha,C_{jk}\alpha)=\alpha h(C).
$$
We apply Theorem~\ref{th-conratnonlsemi} and consider $y=\bold 1$ in the formula~\eqref{a-opconrate2}.
Since $\alpha\geq 0$ and $h(C)\leq 0$, we deduce that the set $U(w)$ is invariant. Therefore we conclude.
 \end{proof}
The Kuramoto equation~\cite{Strogatz00fromkuramoto} is a special case of the protocol~\eqref{a-nonlinearconsensus}.
\begin{align}\label{a-kuramoto}
\begin{array}{ll}
&\dot\theta_i=\displaystyle\sum_{j:\, (i,j)\in E}  C_{ij} \sin(\theta_j-\theta_i), i=1,\dots,n.\\
\end{array}
\end{align}
Let $w<\pi/2$. Then 
$$
\inf\{\cos(t):t\in [-w,w]\}\geq \cos w>0.
$$
We apply Proposition~\ref{pr-Uconvexopenbold1} to obtain that for all $\theta(0)$ such that $\othernorm{\theta(0)}_H<w$, the solution of~\eqref{a-kuramoto}
satisfies:
$$
\othernorm{\theta(t)}_H\leq e^ { h(C)\cos(w) t} \othernorm{\theta(0)}_H,\enspace \forall t\geq 0.
$$
In particular, for all $\theta(0) \in(-\pi/4,\pi/4)^ n$, the solution of equation~\eqref{a-kuramoto} satisfies:
$$
\othernorm{\theta(t)}_H\leq e^ {h(C)\cos(\othernorm{\theta(0)}_H)t} \othernorm{\theta(0)}_H,\enspace\forall t\geq 0.
$$

\begin{rem}
 Moreau~\cite{Moreau05} showed that if there is a node connected by path to all other nodes in the graph $(V,E)$, then the systems~\eqref{a-nonlinearconsensus} is globally 
convergent and~\eqref{a-kuramoto} is globally convergent on the set $(-\pi/2,\pi/2)^ n$. Compared to his results (see Remark~\ref{rem-compareMoreaulinear}), our condition for convergence is more strict but we 
obtain an explicit exponential contraction rate.
\end{rem}
Another class of maps satisfying~\eqref{a-gammaik} is $\gamma_{ik}(t)=\arctan(t)$. Consider the following system
\begin{align}\label{a-arctan}
\begin{array}{ll}
&\dot x_i=\displaystyle\sum_{j:\, (i,j)\in E}  C_{ij} \arctan(x_j-x_i), i=1,\dots,n.\\
\end{array}
\end{align}
Then we obtain in the same way that for all $x(0)\in \R^ n$, the solution of~\eqref{a-arctan} satisfies:
$$
\othernorm{x(t)}_H\leq e^ {\frac{h(C)}{1+x(0)^ 2}t}\othernorm{x(0)}_H,\enspace \forall t\geq 0.
$$
\end{example}

\begin{example}(Discrete $p$-Laplacian)
We now analyze the degenerate case of the $p$-Laplacian consensus dynamics for $p\in(1,2)\cup(2,+\infty)$. Then latter can be 
described by the dynamical system in $\R^ n$:
$$
\dot v_i=\sum_{j:\, (i,j)\in E} C_{ij}(v_j-v_i)
\left|C_{ij}(v_i-v_j)\right|^{p-2},  \enspace i=1,\dots,n.
$$
Let $\alpha>\beta>0$ and consider the convex open sets
$$
V(\beta):=\{v: \min_{i\neq j}|v_i-v_j|> \beta\},\quad U(\alpha):=\{v: \max_{i\neq j}|v_i-v_j|< \alpha\}.
$$
A basic calculus shows that for $v\in V(\beta)$,
$$
\frac{\partial \phi_i(v)}{\partial v_j}=\left\{\begin{array}{ll}
                                             0,& \enspace (i,j)\notin E\\
(p-1)|v_i-v_j|^ {p-2} C_{ij}^{p-1},&\enspace (i,j)\in E                                            
\end{array}
\right.
$$
Let $C^{p-1}$ denote the matrix with entries $C_{ij}^ {p-1}$.
Recall that $h(C^ {p-1})\leq 0$.
We have:
$$
h(D\phi(x))\leq 
\left\{\begin{array}{ll}
(p-1)h(C^{p-1})\beta^{p-2},&\enspace p>2,x\in V(\beta)\\
(p-1)h(C^{p-1})\alpha^{p-2},&\enspace 1<p<2,x\in V(\beta)\cap U(\alpha)
\end{array}\right.
$$
 When $1<p<2$, the contraction rate on $V(\beta)\cap U(\alpha)$ tends to $-\infty$ while $\alpha$ tends to 0.
When $p>2$, the contraction rate on $V(\beta)$ tends to $0$ while $\beta$ tends to $0$. If we fix some $\beta >\min_{(i,j)\in E} C_{ij}^ {-1}$, it can be 
checked that the contraction rate on $V(\beta)$ tends to $-\infty$ when $p$ tends to $+\infty$.
\end{example}

\section{Contraction rate in Hilbert's metric of non-linear flows}\label{sec-rateflowHilbertsmetric}
In this section, we apply Theorem~\ref{th-opnorm} to determine
the contraction rate in Hilbert's metric of the flow of an ordinary differential equations, still in the finite dimensional case.

\subsection{Contraction rate formula in Hilbert's metric}
In the following, we consider a continuously differentiable application $\phi:\C^ 0\rightarrow \cX$ such that $\phi(\lambda x)=\lambda \phi(x)$, for all $\lambda>0$ 
and $x\in \C^ 0$. Note that the later property implies that 
$$
D\phi(x)x=\phi(x).
$$
We denote by $M$ the flow associated to the differential equation (see Section~\ref{sec-nonlinearconsensus} for notations):
\begin{align}\label{a-xtphit}
 \dot x=\phi(x).
\end{align}
By uniqueness of the solution, it is clear that for all $x_0\in \C^ 0$,
$$
M_t(\lambda x_0)=\lambda M_t(x_0),\quad t\in J(x_0).
$$
Let $U\subset \C^0$ be a convex open set. Define the optimal contraction rate of the flow in Hilbert's metric on $U$ by:
\begin{align}\label{a-opconrate}
\begin{array}{l}
 \alpha(U):=
\inf\{\alpha\in \R: d_H(M_t(x_1),M_t(x_2))\leq e^{\alpha t}d_H(x_1,x_2), x_1,x_2\in U,
t\leq t_U(x_1)\wedge t_U(x_2)
\}.
\end{array}
\end{align}
For $x\in \C^ 0$, define:
\begin{align}\label{a-cx}
c(x):=-\inf_{ z\in [0, x]}\inf_{\substack{\nu,\pi \in \pP(x)\\ \<\pi,z>+\<\nu,x-z>=0}} \<\pi,D\phi(x)z>+\<\nu,D\phi(x)(x-z)>
\end{align}
For the same reason as in the proof of Proposition~\ref{pr-alphaHilsemiL}, it follows that
\begin{align}\label{a-cxextr}
c(x)=-\inf_{z\in \extr[0, x]}\inf_{\substack{\nu,\pi \in \extr\pP(x)\\  \<\pi,z>+\<\nu,x-z>=0}} \<\pi,D\phi(x)z>+\<\nu,D\phi(x)(x-z)>
\end{align}
\begin{theo}\label{th-Hilbertmetricnonlinearflow}
 Let $U\subset \C^ 0$ denote a convex open set such that $\lambda U=U$ for all $\lambda>0$. 
Then
\begin{align}\label{a-alphaUcx}
\alpha(U)=\sup_{x\in U} c(x).
\end{align}
\end{theo}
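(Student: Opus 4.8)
The plan is to mirror the proof of Theorem~\ref{th-conratnonlsemi}, establishing the two inequalities $\alpha(U)\le\sup_{x\in U}c(x)$ and $\alpha(U)\ge\sup_{x\in U}c(x)$, but working throughout with the position-dependent infinitesimal seminorm $\omega(\cdot/y)$ underlying Nussbaum's Finsler description of $d_H$, rather than with the fixed-unit seminorm $\|\cdot\|_H=\omega(\cdot/\unit)$. The organising observation is that homogeneity $\phi(\lambda x)=\lambda\phi(x)$ yields $D\phi(x)x=\phi(x)$, so that along any orbit $y(t)=M_t(x)$ the base point is itself an integral curve of the frozen linear equation $\dot w=D\phi(y(t))w$. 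Consequently, at each instant both the variational vector $v(t)=DM_t(x)z$ (which solves $\dot v=D\phi(y(t))v$) and the base point $y(t)$ evolve under the same linear operator, which reduces the instantaneous analysis to a single linear flow.

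The key lemma I would isolate is that $c(x)$ is the infinitesimal contraction rate in Hilbert's metric of the linear flow $\dot w=D\phi(x)w$ at the point $x$: writing $L=D\phi(x)$, that $c(x)$ is the right derivative at $t=0$ of the operator norm of $\exp(tL)$ viewed as a map from $(\cX/\R x,\omega(\cdot/x))$ to $(\cX/\R\exp(tL)x,\omega(\cdot/\exp(tL)x))$. Since $\exp(tL)x\in\R\exp(tL)x$, Theorem~\ref{th-opnorm} applies with $\unit_1=x$ and $\unit_2=\exp(tL)x$ and expresses this operator norm as
\[
\sup_{\nu,\pi\in\pP(\exp(tL)x)}\ \sup_{z\in[0,x]}\langle\nu-\pi,\exp(tL)z\rangle.
\]
I would differentiate this at $t=0$ by the subsmoothness/semidifferentiability argument already used in Proposition~\ref{pr-alphaHilsemiL} (Thm~10.30 of~\cite{RockafellarWets}): the active maximizers at $t=0$ are exactly the triples $(\nu,\pi,z)$ with $\langle\pi,z\rangle+\langle\nu,x-z\rangle=0$, and the $t$-derivative splits into the contribution $\langle\nu-\pi,Lz\rangle$ of $\exp(tL)z$ and a correction $-\langle\nu,\phi(x)\rangle$ coming from the motion of the simplex $\pP(\exp(tL)x)$, evaluated on the active set. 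Using $\phi(x)=Lx$, these combine into $-\langle\nu,L(x-z)\rangle-\langle\pi,Lz\rangle$, and taking the supremum over the active set returns precisely the expression defining $c(x)$ in~\eqref{a-cx}.

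For the upper bound, the lemma gives, along each orbit, that $g(t):=\omega(DM_t(x)z/M_t(x))$ has upper right derivative at most $c(M_t(x))\,g(t)\le\beta\,g(t)$ with $\beta:=\sup_{x\in U}c(x)$, whence the infinitesimal contraction $\omega(DM_t(x)z/M_t(x))\le e^{\beta t}\omega(z/x)$ for $t<t_U(x)$. Plugging this into Nussbaum's Finsler representation of $d_H$, I would bound $d_H(M_t(x_1),M_t(x_2))$ by the Finsler length of the image under $M_t$ of the straight segment $\gamma(s)=(1-s)x_1+sx_2$, whose velocity is $DM_t(\gamma(s))\dot\gamma(s)$ based at $M_t(\gamma(s))$; convexity of $U$ keeps $\gamma$ inside $U$, and $\lambda U=U$ together with the fact that straight segments are $d_H$-geodesics lets the path infimum recover $d_H$. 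This yields $d_H(M_t(x_1),M_t(x_2))\le e^{\beta t}d_H(x_1,x_2)$ and hence $\alpha(U)\le\beta$.

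For the reverse inequality, Theorem~\ref{th-Nus94} identifies the Hilbert--Lipschitz constant of each time-$t$ map $M_t$ with the supremum over $x$ of the Hopf-oscillation operator norm of $DM_t(x)$ between the units $x$ and $M_t(x)$; hence the definition~\eqref{a-opconrate} of $\alpha(U)$ forces $\omega(DM_t(x)z/M_t(x))\le e^{\alpha(U)t}\omega(z/x)$ for small $t$. Applying Theorem~\ref{th-opnorm} and running the key-lemma computation in reverse (differentiating at $t=0$) gives $c(x)\le\alpha(U)$ for every $x\in U$, so $\beta\le\alpha(U)$. The main obstacle throughout is the moving base point: unlike in Theorem~\ref{th-conratnonlsemi}, the simplex $\pP(M_t(x))$ genuinely varies along the orbit, so the instantaneous rate differs from the fixed-unit rate $h(D\phi(x))$ by the simplex-motion term computed above. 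The device that tames this is precisely the homogeneity identity $D\phi(x)x=\phi(x)$, which makes the base point an orbit of the frozen linear flow and ensures the simplex-motion correction is exactly absorbed into $c(x)$.
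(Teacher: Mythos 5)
Your proposal is correct and follows essentially the same route as the paper: the identification of $c(x)$ as the semiderivative at the identity of the Hopf-oscillation operator norm with moving base point (via Theorem~\ref{th-opnorm} and the subsmoothness argument of Proposition~\ref{pr-alphaHilsemiL}, with the simplex-motion term absorbed through $D\phi(x)x=\phi(x)$), the differential inequality for $\omega(DM_t(x)z/M_t(x))$ integrated along straight segments via the Finsler structure for the upper bound, and Theorem~\ref{th-Nus94} plus differentiation at $t=0$ for the lower bound. The only cosmetic difference is that you freeze the linearization as $\exp(tD\phi(x))$ where the paper differentiates $F(DM_t(x))$ directly; the two agree to first order, so the computation is identical.
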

\begin{proof}
First we prove that for all $x\in U$,
$$
c(x)=\lim_{t\rightarrow 0^+} t^{-1}(\sup_{z} \frac{\hilbert{DM_t(x)z}{M_t(x)}}{\myhilbert(z/x)}-1).
$$
For this, fix $x\in U$ and define a functional on a neighborhood of $I$:
$$
F(W)=\sup_{z\in [0,x]}\sup_{\pi,\nu\in \pP(\unit)} \<\frac{\nu}{\<\nu,W(x)>}-\frac{\pi}{\<\nu,W(x)>},W(z)>.
$$
By Theorem~\ref{th-opnorm}, for $t\in[0,t_U(x))$,
$$
\begin{array}{ll}
 \othernorm{DM_t(x)}_H&=\displaystyle\sup_{z\in[0,x]}\sup_{\nu,\pi\in \pP(DM_t(x)x)} \<\nu-\pi,DM_t(x)z>\\
&=\displaystyle\sup_{z\in[0,x]}\sup_{\nu,\pi\in \pP(\unit)} \<\frac{\nu}{\<\nu,DM_t(x)x>}-\frac{\pi}{\<\pi,DM_t(x)x>},DM_t(x)z>\\
&=F(DM_t(x)).
\end{array}
$$
Therefore,
\begin{align}\label{a-DMtxz}
\begin{array}{ll}
&\displaystyle\lim_{t\rightarrow 0^+} t^{-1}(\sup\frac{\hilbert{DM_t(x)z}{M_t(x)}}{\myhilbert(z/x)}-1)\\
&=\displaystyle\lim_{t\rightarrow 0^+} t^{-1}(\othernorm{DM_t(x)}_H-1)\\
&=\displaystyle\lim_{t\rightarrow 0^+} t^{-1}(F(DM_t(x))-F(I))\\
\end{array}
\end{align}
Recall that
$DM_t(x):[0,t_U(x))\rightarrow \End(\cX)$ satisfies:
$$
\lim_{t\rightarrow 0^ +} t^ {-1}(DM_t(x)-I)=D\phi(x).
$$
The following reasoning is similar to that in the proof of~Proposition~\ref{pr-alphaHilsemiL}. 
The limit in~\eqref{a-DMtxz} equals to the semiderivative of $F$ at $I$ in the direction $D\phi(x)$, if this semiderivative exists. 
Since $[0,x]$ and $\pP(\unit)$
are compact sets and the function
$$
F_{\nu,\pi,z}(W)=\<\frac{\nu}{\<\nu,W(x)>}-\frac{\pi}{\<\pi,W(x)>},W(z)>
$$
is continuously differentiable on $W$ such that $F_{\nu,\pi,z}$ and the derivative $DF_{\nu,\pi,z}$ are jointly
continuous on $(\nu,\pi,z,W)$,
we know that $F$ is semidifferentiable.
The derivative of $F_{\nu,\pi,z}$ at point $I$ in the direction $D\phi(x)$ is:
$$
\begin{array}{ll}
&DF_{\nu,\pi,z}(I)(D\phi(x))\\
&=\displaystyle\frac{\<\nu,D\phi(x)z>\<\nu,x>-\<\nu,z>\<\nu,D\phi(x)x>}{\<\nu,x>^ 2}-\frac{\<\pi,D\phi(x)z>\<\pi,x>-\<\pi,z>\<\pi,D\phi(x)x>}{\<\pi,x>^ 2}\\
&=\<\frac{\nu}{\<\nu,x>},D\phi(x)z>-\<\frac{\nu}{\<\nu,x>},z>\<\frac{\nu}{\<\nu,x>},D\phi(x)x>-\<\frac{\pi}{\<\pi,x>},D\phi(x)z>+\<\frac{\pi}{\<\pi,x>},z>\<\frac{\pi}{\<\pi,x>},D\phi(x)x>
\end{array}
$$
Denote
$$
T(W)=\argmax{\substack{\nu,\pi\in\pP(\unit)\\z\in[0,x]}} F_{\nu,\pi,z}(W).
$$
Then 
$$
T(I)=\{\nu,\pi\in \pP(\unit),z\in [0,x]:\<\frac{\nu}{\<\nu,x>}-\frac{\pi}{\<\pi,x>},z>=1\}.
$$
The semiderivative of $F$ at point $I$ in the direction $D\phi(x)$ is then:
$$
\begin{array}{ll}
&\displaystyle\lim_{t\rightarrow 0^+} t^{-1}(F(DM_t(x))-F(I))\\
&=\displaystyle\sup_{\nu,\pi,z\in T(I)}  DF_{\nu,\pi,z}(W)(D\phi(x))\\
&=\displaystyle\sup_{z\in[0,x]}\sup_{\substack{\nu,\pi \in \pP(x)\\ \<\nu-\pi,z>=1}} \<\nu,D\phi(x)z>-\<\nu,D\phi(x)x>-\<\pi,D\phi(x)z>\\
&=c(x).
\end{array}
$$
Now fix $x_0\in U$. By Cauchy-Lipschitz, there is $r>0$ and $t_0>0$ such that the flow is well-defined on $[0,t_0]\times B(x_0;r)$ where $B(x_0;r)$
is the open ball of radius $r$ centered at $x_0$. We assume that $B(x_0;r)\subset U$ and consider the set $G:=\cup_{\lambda>0}\lambda B(x_0;r)$.
For every $t\leq t_0$, the application $M_t$ is well defined on $G$ such that 
$$
d_H(M_t(x),M_t(y))\leq e^{\alpha(U)t} d_H(x,y),\enspace \forall x,y\in G.
$$
By Theorem~\ref{th-Nus94}, we have
$$
\myhilbert(DM_t(x)v/M_t(x))\leq e^{\alpha(U)t}\myhilbert(v/x)\enspace \forall x\in G, v\in \cX.
$$
Therefore,
$$
c(x_0)=\limsup_{t\rightarrow 0^+}  \frac{1}{t}(\sup_{z} \frac{\myhilbert(DM_t(x_0)z/M_t(x_0))}{\myhilbert(z/x_0)}-1) \leq \alpha(U).
$$
It follows that $$\alpha(U)\geq \sup_{x\in U} c(x).$$
Finally, denote $$c=\sup_{x\in U} c(x).$$ 
Then for all $x\in U$, $v\in \cX$ and $t\in t_U(x)$,
$$
\begin{array}{ll}
&\displaystyle\limsup_{h\rightarrow 0^+} \frac{\myhilbert(DM_{t+h}(x)v/M_{t+h}(x))-\myhilbert(DM_t(x)v/M_t(x))}{h}\\
&=\displaystyle\limsup_{h\rightarrow 0^+} \frac{\myhilbert(DM_{h}(M_t(x))(DM_t(x)v))/M_{h}(M_t(x)))-\myhilbert(DM_t(x)v/M_t(x))}{h}\\
&=\displaystyle\limsup_{h\rightarrow 0^+} \frac{\myhilbert(DM_t(x)v/M_t(x))}{h} (\frac{\myhilbert(DM_{h}(M_t(x))(DM_t(x)v))/M_{h}(M_t(x))}{\myhilbert(DM_t(x)v/M_t(x))}-1)\\
&\leq c(M_t(x))\myhilbert(DM_t(x)v/M_t(x)) \leq c \myhilbert(DM_t(x)v/M_t(x)).
\end{array}
$$
Therefore,  for all $x\in U$, $v\in \cX$ and $t\in t_U(x)$ we have that,
$$
\myhilbert(DM_t(x)v/M_t(x))\leq e^{ct} \myhilbert(v/x).
$$
 Let $x,y\in U$ and define $\gamma(s)=(1-s)x+sy$, $0\leq s\leq 1$. By the compacity of the set $\{\gamma(s):s\in[0,1]\}$, we know that $$t_0:=\inf\{t_U(\gamma(s)):s\in [0,1]\}>0.$$
Therefore, using the Finsler structure of Hilbert's metric (\cite[Thm 2.1]{nussbaum94}), we get that for every $t\leq t_0$,
$$
\begin{array}{ll}
&d_H(M_t(x),M_t(y))\leq \int_{0}^1 \myhilbert(DM_t(\gamma(s))(y-x)/M_t(\gamma(s))) ds\\
&\leq \int_{0}^1 e^{ct} \myhilbert(y-x/\gamma(s))ds\\
&=e^{ct} d_H(x,y).
\end{array}
$$
Consequently we proved that for all $x,y\in U$
$$
\limsup_{h\rightarrow 0^+} \frac{d_H(M_h(x),M_h(y))-d_H(x,y)}{h}\leq cd_H(x,y).
$$
This implies that for all $x,y\in U$ and $t< t_U(x)\wedge t_U(y)$:
$$
\begin{array}{ll}
&\displaystyle\limsup_{h\rightarrow 0^+} \frac{d_H(M_{t+h}(x),M_{t+h}(y))-d_H(M_t(x),M_t(y))}{h}\\
&=\displaystyle\limsup_{h\rightarrow 0^+} \frac{d_H(M_{h}(M_t(x)),M_{h}(M_t(y)))-d_H(M_t(x),M_t(y))}{h}\\
&\leq cd_H(M_t(x),M_t(y)).
\end{array}
$$
It follows that
$$
d_H(M_t(x),M_t(y))\leq e^{ct}d_H(x,y),\enspace \forall x,y\in U, t< t_U(x)\wedge t_U(y).
$$
Therefore $$\alpha(U)\leq c.$$
\end{proof}

\subsection{Contraction rate in Hilbert's projective metric of a non-linear flow on the standard positive cone}
We specialize the contraction formula~\eqref{a-alphaUcx} to the case $\cX=\R$ and $\C=\R^+$ under the same notations and assumptions.
\begin{coro}\label{coro-conrateHilRn}
When $\cX=\R^n$ and $\C=\R_n^+$, the contraction rate formula~\eqref{a-alphaUcx} can be specified as below:
$$
\alpha(U)=\sup_{x\in U}c(x)=\sup_{x\in U} h(A(x)),\enspace \forall x\in U
$$
where
$$
A(x)=\delta(x)^{-1}D\phi(x)\delta(x)
$$
and $h$ is defined in~\eqref{a-hA}.
\end{coro}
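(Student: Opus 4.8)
The first equality in the statement is just Theorem~\ref{th-Hilbertmetricnonlinearflow}, so the entire content to be proved is the pointwise identity $c(x)=h(A(x))$ for each fixed $x\in U$. The plan is to evaluate the extreme-point formula~\eqref{a-cxextr} for $c(x)$ explicitly in the case $\cX=\R^n$, $\C=\R^n_+$, $\unit=\bold 1$, and to recognize the resulting combinatorial minimization as the one that already appeared when deriving~\eqref{a-hA}.

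First I would identify the two sets of extreme points entering~\eqref{a-cxextr}. Since $\R^n_+$ is self-dual, $\pP(x)=\{\mu\in\R^n_+:\,\<\mu,x>=1\}$ is a genuine simplex, and as $x\in\C^0$ (so $x_k>0$ for all $k$) its vertices are $\extr\pP(x)=\{e_i/x_i:\,i=1,\dots,n\}$; likewise $\extr[0,x]=\{\sum_{k\in I}x_ke_k:\,I\subseteq\{1,\dots,n\}\}$, i.e.\ the points $z$ with $z_k\in\{0,x_k\}$. Taking $\pi=e_i/x_i$, $\nu=e_j/x_j$ and $z=\sum_{k\in I}x_ke_k$ (so that $x-z=\sum_{k\notin I}x_ke_k$), one finds that $\<\pi,z>$ equals $1$ if $i\in I$ and $0$ otherwise, while $\<\nu,x-z>$ equals $1$ if $j\notin I$ and $0$ otherwise. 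Hence the linear constraint $\<\pi,z>+\<\nu,x-z>=0$ appearing in~\eqref{a-cxextr} is equivalent to $i\notin I$ and $j\in I$, which in particular forces $i\neq j$.

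Next I would substitute these choices into the objective. Setting $A:=A(x)=\delta(x)^{-1}D\phi(x)\delta(x)$, so that $A_{ik}=x_i^{-1}(D\phi(x))_{ik}x_k$, a one-line computation gives $\<\pi,D\phi(x)z>=\sum_{k\in I}A_{ik}$ and $\<\nu,D\phi(x)(x-z)>=\sum_{k\notin I}A_{jk}$, whence~\eqref{a-cxextr} becomes
\[
c(x)=-\min_{i\neq j}\ \min_{\substack{I\subseteq\{1,\dots,n\}\\ i\notin I,\ j\in I}}\Big(\sum_{k\in I}A_{ik}+\sum_{k\notin I}A_{jk}\Big).
\]
This is exactly the expression reached in the course of establishing~\eqref{a-hA}: optimizing independently over the membership of each index $k\notin\{i,j\}$ replaces its contribution by $\min(A_{ik},A_{jk})$, while the forced memberships $j\in I$ and $i\notin I$ peel off the terms $A_{ij}$ and $A_{ji}$; this yields $c(x)=h(A(x))$.

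The computation is elementary, and the only point demanding care is the bookkeeping of the conjugation by $\delta(x)$, so that the factors $x_i^{-1}$ coming from the vertices of $\pP(x)$ and the factors $x_k$ coming from the vertices of $[0,x]$ combine precisely into the entries of $A(x)$. Note finally that no relation $A(x)\bold 1=0$ is needed: although $A(x)\bold 1=\delta(x)^{-1}\phi(x)$ is in general nonzero, only off-diagonal entries of $A(x)$ enter~\eqref{a-hA}, so by Remark~\ref{rem-hdefined} the functional $h$ is insensitive to the diagonal and $h(A(x))$ is well defined.
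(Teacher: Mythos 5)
Your proposal is correct and follows essentially the same route as the paper: both reduce to the identity $c(x)=h(A(x))$ by observing that $\extr\pP(x)=\delta(x)^{-1}\extr\pP(\bold 1)$ and $\extr[0,x]=\delta(x)\extr[0,\bold 1]$, so that the conjugation by $\delta(x)$ turns the extreme-point formula~\eqref{a-cxextr} into the combinatorial minimization defining~\eqref{a-hA}. You merely carry out in explicit coordinates (including the correct observation, via Remark~\ref{rem-hdefined}, that $A(x)\bold 1\neq 0$ is harmless) what the paper's proof states as a one-line substitution.
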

\begin{proof}
It is sufficient to remark that in this special case:
$$
\extr \pP(x)=\delta(x)^ {-1}\extr\pP,
$$
and
$$
\extr[0,x]=\delta(x) \extr([0,\bold 1])
$$
Therefore,
$$
\begin{array}{ll}
c(x)&=-\displaystyle\inf_{z\in \extr[0, x]}\inf_{\substack{\\\pi,\nu \in \extr\pP(x)\\  \<\nu,z>+\<\pi,x-z>=0}} \<\nu,D\phi(x)z>+\<\pi,D\phi(x)(x-z)>\\
&=-\displaystyle\inf_{z\in \extr[0, \bold 1]}\inf_{\substack{\\\pi,\nu \in \extr\pP\\  \<\nu,z>+\<\pi,x-z>=0}} \<\delta(x)^ {-1}\nu,D\phi(x)\delta(x)z>+\<\delta(x)^ {-1}\pi,D\phi(x)\delta(x)(\bold 1-z)>\\
&=h(A(x)).
\end{array}
$$
\end{proof}
\begin{rem}
 Consider the linear flow in $\R^ n$ of the following equation:
$$
\dot x=Ax,
$$
where $A_{ij}\geq 0$, for all $i\neq j$, so that the flow is order preserving.
Let $x$ be in the interior of $\R^ n_+$. Then we have
$$
\delta(x)^ {-1}A\delta(x)_{ij}=A_{ij}\frac{x_j}{x_i},\enspace i,j=1,\dots,n.
$$ 
Therefore, 
$$
h(\delta(x)^ {-1}A\delta(x))=-\min_{i\neq j} A_{ji}\frac{x_i}{x_j}+A_{ij}\frac{x_j}{x_i}+\sum_{k\notin\{i,j\}} \min(A_{ik}\frac{x_k}{x_i},A_{jk}\frac{x_k}{x_j}).
$$
The global contraction rate (restricted to $\C^ 0$) is then
$$
\sup_{x\in \C^ 0} h(\delta(x)^ {-1}A\delta(x))=-\min_{i\neq j} 2\sqrt{A_{ij}A_{ji}}.
$$
Such contraction rate can be alternatively obtained by differentiating  with respect to $t$ at $0$ the contraction ratio of $I+tA$, using Birkhoff's theorem.
Hence a positive global contraction rate exists if and only if $A_{ij}>0$ for all $i\neq j$. However, strict local contraction may
occur even if there is $A_{ij}=0$ for some $i\neq j$. Let $K>1$ and consider the convex open set 
$$
U(K)=\{x\in \R^ n: \frac{1}{K}\leq \frac{x_i}{x_j}\leq K\}.
$$
Then the local contraction rate with respect to $U$ is
$$
\sup_{x\in U(K)} h(\delta(x)^ {-1}A\delta(x)) \leq \frac{h(A)}{K}.
$$
Therefore, $h(A)<0$ is sufficient to have a strict local contraction. Moreover, the above bound on the contraction rate decreases (faster convergence) as the 
orbit approaches to consensus, i.e., a multiple of $\bold 1$.
\end{rem}

\subsection{Application to the space of Hermitian matrices}
We specialize the contraction formula~\eqref{a-alphaUcx} to the case $\cX=\sym_n$ and $\C=\sym_n^+$ under the same notations and assumptions. 
 \begin{coro}\label{coro-applicationtoSn}
  When $\cX=\sym_n$ and $\C=\sym_n^ +$, the contraction rate formula~\eqref{a-alphaUcx} can be specified as below:
$$
\alpha (U)=\sup_{P\in U} c(P)=\sup_{P\in U} h(\Phi(P))
$$
where
$
\Phi(P):\sym_n^ +\rightarrow \sym_n^ +
$ is a linear application given by:
$$
\Phi(P)(Z)=P^{-\frac{1}{2}}D\phi(P)(P^{\frac{1}{2}}ZP^{\frac{1}{2}})P^{-\frac{1}{2}}
$$
and $h$ is defined in~\eqref{a-hPhi}.
 \end{coro}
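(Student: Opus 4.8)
The plan is to mirror the proof of Corollary~\ref{coro-conrateHilRn}, replacing the diagonal conjugation $\delta(x)^{-1}(\cdot)\delta(x)$ used in the commutative case by the two-sided congruence by $P^{\pm 1/2}$ that is natural on $\sym_n$. The first step is to transport the extreme points of the simplex and of the order interval from the unit $I_n$ to the unit $P$. Since $Z\mapsto P^{1/2}ZP^{1/2}$ is a linear order isomorphism of $\sym_n$ carrying $[0,I_n]$ onto $[0,P]$, and $\mu\mapsto P^{-1/2}\mu P^{-1/2}$ is a linear bijection carrying $\pP(I_n)$ onto $\pP(P)$ (using that congruence preserves positivity and that $\trace(P^{-1/2}\mu P^{-1/2}\,P)=\trace\mu$), both are affine bijections between compact convex sets and hence preserve extreme points. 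Combined with the descriptions $\extr\pP(I_n)=\{uu^*:u^*u=1\}$ (Remark~\ref{rem-xx*yy*}) and $\extr[0,I_n]=\{Q:Q^2=Q\}$ recalled above, this gives
\[
\extr\pP(P)=\{P^{-1/2}uu^*P^{-1/2}:u^*u=1\},\qquad \extr[0,P]=\{P^{1/2}QP^{1/2}:Q^2=Q\}.
\]

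Next I would substitute these parametrizations into the extreme-point formula~\eqref{a-cxextr} for $c(P)$, writing $z=P^{1/2}QP^{1/2}$, $\pi=P^{-1/2}uu^*P^{-1/2}$ and $\nu=P^{-1/2}vv^*P^{-1/2}$. The definition of $\Phi(P)$ gives $D\phi(P)(P^{1/2}ZP^{1/2})=P^{1/2}\Phi(P)(Z)P^{1/2}$, so that, by cyclicity of the trace, all factors $P^{\pm1/2}$ cancel: the constraint $\<\pi,z>+\<\nu,P-z>=0$ becomes $u^*Qu+v^*(I_n-Q)v=0$, that is $Qu=0$ and $Qv=v$, while the objective $\<\pi,D\phi(P)z>+\<\nu,D\phi(P)(P-z)>$ becomes $u^*\Phi(P)(Q)u+v^*\Phi(P)(I_n-Q)v$. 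Hence
\[
c(P)=-\inf_{\substack{Q^2=Q,\ u^*u=v^*v=1\\ Qu=0,\ Qv=v}} u^*\Phi(P)(Q)u+v^*\Phi(P)(I_n-Q)v.
\]

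It remains to recognize the right-hand side as $h(\Phi(P))$. This expression is exactly the intermediate form appearing in the derivation of~\eqref{a-hPhi}: decomposing the projector $Q$ and its complement $I_n-Q$ into rank-one pieces along an orthonormal basis $X=(x_1,\dots,x_n)$ with $x_1\in\ker Q$ and $x_2\in\operatorname{range}Q$, and assigning each remaining $x_k$ to whichever side yields the smaller contribution, produces precisely the two leading terms and the $\min$ terms of~\eqref{a-hPhi}. I would stress that this reduction is purely linear-algebraic and never uses $\Phi(P)(I_n)=0$; indeed $\Phi(P)(I_n)=P^{-1/2}\phi(P)P^{-1/2}$ need not vanish (recall $D\phi(P)P=\phi(P)$), but by Remark~\ref{rem-hdefined} the functional $h$ is insensitive to modifications of the form $\Psi\mapsto\Psi-Z(\cdot)-(\cdot)Z$, so $h(\Phi(P))$ is well defined and equals the above infimum. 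Having established $c(P)=h(\Phi(P))$ for every $P\in\C^0$, the conclusion $\alpha(U)=\sup_{P\in U}c(P)=\sup_{P\in U}h(\Phi(P))$ follows by taking the supremum over $U$ in Theorem~\ref{th-Hilbertmetricnonlinearflow}.

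The step most in need of care is the verification that the congruences send extreme points to extreme points --- which rests only on their being affine bijections between the relevant compact convex sets --- together with the bookkeeping of the $P^{\pm1/2}$ factors in the three pairings; once these cancellations are carried out correctly, the identification with the formula~\eqref{a-hPhi} for $h(\Phi(P))$ is immediate.
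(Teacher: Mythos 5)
Your proposal is correct and follows essentially the same route as the paper: the paper's own proof consists precisely of the two transport identities $\extr[0,P]=P^{1/2}(\extr[0,I_n])P^{1/2}$ and $\extr\pP(P)=P^{-1/2}(\extr\pP(I_n))P^{-1/2}$, followed by the same substitution-and-cancellation as in Corollary~\ref{coro-conrateHilRn}. Your additional remarks --- that the congruences are affine order isomorphisms hence preserve extreme points, and that the identification with~\eqref{a-hPhi} does not require $\Phi(P)(I_n)=0$ (covered by Remark~\ref{rem-hdefined}) --- are exactly the details the paper leaves implicit.
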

\begin{proof}
 Remark that in this special case,
$$
\extr[0,P]=P^ {\frac{1}{2}}(\extr[0,I_n])P^{\frac{1}{2}},
$$
and
$$
\extr(\pP(P))=P^ {-\frac{1}{2}}(\extr\pP)P^{-\frac{1}{2}}. 
$$
The desired formula is obtained the same way as in the proof of Corollary~\ref{coro-conrateHilRn}.
\end{proof}
\begin{example}
 As an example, let us show a calculus of contraction rate using Corollary~\ref{coro-applicationtoSn} for the following differential equation
in $\sym_n$:
\begin{align}\label{aPBP}
\dot P=\phi(P):=\frac{-PBP}{\trace(CP)} +AP+PA'
\end{align}
where $B,C\in \hat\sym_n^+$. Let $\hat P\in \sym_n^ +$. Then the application $\Phi(P):\sym_n\rightarrow \sym_n$ defined in Corollary~\ref{coro-conrateHilRn}
is given by:
$$
\begin{array}{ll}
\Phi(P)(Z)&=P^{-\frac{1}{2}}D\phi(P)(P^{\frac{1}{2}}ZP^{\frac{1}{2}})P^{-\frac{1}{2}}\\
&= (-ZP^ {\frac{1}{2}} BP^ {\frac{1}{2}}-P^ {\frac{1}{2}}BP^ {\frac{1}{2}} Z)\trace(CP)^{-1}\\
&~~~~~\quad~+
P^ {\frac{1}{2}}BP^ {\frac{1}{2}}\trace(CP)^ {-2}\trace(CP^ {\frac{1}{2}}ZP^{\frac{1}{2}})+P^ {-\frac{1}{2}}AP^ {\frac{1}{2}}Z
+ZA'P^ {-\frac{1}{2}}
\end{array}
$$
Therefore let $x,y\in \CC^ n$ such that $x^ *y=0$ then
$$
y^*\Phi(P)(xx^*)y=y^*P^ {\frac{1}{2}}BP^ {\frac{1}{2}}y\trace(CP^ {\frac{1}{2}}xx^*P^{\frac{1}{2}})\trace(CP)^ {-2}.
$$
Let $\{x_1,\dots,x_n\}$ be an orthonormal basis. Denote $\alpha_1=x_1^*P^{\frac{1}{2}}BP^ {\frac{1}{2}}x_1$, 
$\alpha_2=x_2^*P^{\frac{1}{2}}BP^ {\frac{1}{2}}x_2$, $\beta_1=x_1^*P^{\frac{1}{2}}CP^ {\frac{1}{2}}x_1$ and
$\beta_2=x_1^*P^{\frac{1}{2}}CP^ {\frac{1}{2}}x_1$. Suppose that $\alpha_1\leq \alpha_2$.
Then 
$$
\begin{array}{ll}
&x_1^*\Phi(P)(x_2x_2^ *)x_1 +x_2^*\Phi(P)(x_1x_1^ *)x_2+\displaystyle\sum_{k=3}^ n \min(x_1^*\Phi(P)(x_kx_k^ *)x_1 , x_2^*\Phi(P)(x_kx_k^ *)x_2)\\
&=\big(\alpha_1\beta_2+\alpha_2\beta_1+\alpha_1\displaystyle\sum_{k=3}^ n x_k^ *P^ {\frac{1}{2}}CP^ {\frac{1}{2}}x_k\big)\trace(CP)^ {-2} \\
&=\big(\alpha_1\beta_2+\alpha_2\beta_1+\alpha_1(\trace(CP)-\beta_1-\beta_2))\trace(CP)^ {-2}\\
&=\big(\alpha_1\trace(CP)+\beta_1(\alpha_2-\alpha_1))\trace(CP)^ {-2}\\
&\geq \lambda_{\min}(BP)\trace(CP)^{-1}.
\end{array}
$$
Therefore by the definition in~\eqref{a-hPhi},
$$
h(\Phi(P))\leq -\lambda_{\min}(BP)\trace(CP)^ {-1}.
$$
Let us consider the convex open set 
$$
U=\{P\in \hat \sym_n^ +:d_H(P,I_n)< K\}.
$$
Then,
$$
\begin{array}{ll}
\displaystyle\sup_{P\in U} h(\Phi(P))&\leq \sup_{P\in U} -\lambda_{\min}(BP)\trace(CP)^ {-1}\\
& \leq
-\displaystyle\frac{\lambda_{\min}(BP)}{n\lambda_{\max}(CP)}\leq
 -\displaystyle\frac{\lambda_{\min}(B)\lambda_{\min}(P)}{n\lambda_{\max}(C)\lambda_{\max}(P)} \\
&\leq -\displaystyle\frac{\lambda_{\min}(B)}{n\lambda_{\max}(C)e^{K}}
\end{array}
$$
Let $\alpha=-\displaystyle\frac{\lambda_{\min}(B)}{n\lambda_{\max}(C)e^{K}}$.
Then by Corollary~\ref{coro-applicationtoSn}, for all $P_1,P_2\in U$ we have:
$$
d_H(M_t(P_1),M_t(P_2))\leq e^{\alpha t}d_H(P_1,P_2),\enspace 0\leq t<t_{U}(P_1)\wedge t_{U}(P_2).
$$
If $A,B,C$ are matrices such that 
$$\phi(I_n)=-B\trace(C)^ {-1}+A+A'=\lambda_0 I_n,$$ then we know that
$$M_t(I_n)=e^ {\lambda_0 t}I_n.$$
In that case, for $P\in U$ we have:
$$
d_H(M_t(P),e^ {\lambda_0 t}I_n)\leq e^{\alpha t}d_H(P,I_n),\enspace 0\leq t<t_{U}(P).
$$
It follows that $t_U(P)=+\infty$ and therefore every solution of equation~\eqref{aPBP} converges exponentially to a scalar multiplication of 
$I_n$.
\end{example}

\bibliographystyle{alpha}
\bibliography{biblio}
\end{document}